%
%

%
%

\documentclass[10pt,reqno]{amsart}
\usepackage{hyperref}\hypersetup{colorlinks=true, citecolor=blue}
\usepackage{graphicx}
\usepackage{xfrac}
\usepackage{esint,amsfonts,amsmath,amssymb,epsfig,
mathrsfs,bm,accents,yfonts,mathtools}
%
%

\textwidth = 13.2 cm
\textheight = 20,6 cm
\hoffset = -.3 cm
\voffset = .1 cm



\tolerance 1000

%
%

\numberwithin{equation}{section}

\newtheoremstyle{mytheorem}
{}
{}
{\it}
{\parindent}
{\bf}
{.}
{ }
{\thmnumber{#2.~}\thmname{#1}\thmnote{~\rm#3}}

\newtheoremstyle{myremark}
{}
{}
{\rm}
{\parindent}
{\bf}
{.}
{ }
{\thmnumber{#2.~}\thmname{#1}\thmnote{~\rm#3}}

\newtheoremstyle{myparagraph}
{}
{}
{\rm}
{\parindent}
{\bf}
{}
{ }
{\thmnumber{#2.~}\thmname{#1}\thmnote{#3}}

\theoremstyle{mytheorem}

\newtheorem{theorem}[subsection]{Theorem}
\newtheorem{lemma}[subsection]{Lemma}

\newtheorem{proposition}[subsection]{Proposition}

\theoremstyle{myremark}
\newtheorem{remark}[subsection]{Remark}
\newtheorem*{remark*}{Remark}

\theoremstyle{myparagraph}

\newtheorem*{parag*}{}

%
%

\makeatletter
\def\@secnumfont{\sc}
\def\section{\@startsection{section}{1}%
\z@{1.5\linespacing\@plus .2\linespacing}{.7\linespacing}%
{\normalfont\sc\centering}}
\makeatother

\makeatletter
\def\ps@headings{\ps@empty
 \def\@evenhead{%
  \setTrue{runhead}%
  \normalfont\footnotesize
  \rlap{\thepage}\hfil
  \def\thanks{\protect\thanks@warning}%
  \leftmark{}{}\hfil}%
 \def\@oddhead{%
  \setTrue{runhead}%
  \normalfont\footnotesize\hfil
  \def\thanks{\protect\thanks@warning}%
  \rightmark{}{}\hfil \llap{\thepage}}%
\let\@mkboth\markboth}
\makeatother
	%

\makeatletter
\renewenvironment{proof}[1][\proofname]{\par
  \pushQED{\qed}%
  \normalfont \topsep6\p@\@plus6\p@\relax
  \trivlist
  \itemindent\normalparindent
  \item[\hskip\labelsep
    \scshape
    #1\@addpunct{.}]\ignorespaces
}{%
  \popQED\endtrivlist\@endpefalse
}
\providecommand{\proofname}{Proof}
\makeatother
	%

%
%

\newcommand{\R}{\mathbb{R}}
\newcommand{\Rn}{\mathbb{R}^n}

\newcommand{\eps}{{\varepsilon}}

\newcommand{\dist}{{\textup {dist}}}

\newcommand\supp{{\rm supp}\,}
\newcommand\res{\mathop{\hbox{\vrule height 7pt width .5pt depth 0pt
\vrule height .5pt width 6pt depth 0pt}}\nolimits}

\newcommand{\loc}{{\textup{loc}}}

\newcommand{\cL}{{\mathcal{L}}}
\newcommand{\cH}{{\mathcal{H}}}

\newcommand\N{{\mathbb N}}

\newcommand\bS{{\mathbb S}}
\newcommand\C{{\mathbb C}}

\newcommand{\de}{\partial}

\newcommand{\ph}{\varphi}

\newcommand{\cHn}{{\mathcal H}^{n-1}}

\newcommand{\EEE}{\mathscr{E}}
\newcommand{\GGG}{\mathscr{G}}

\newcommand{\Bp}{B_1^+}
\newcommand{\Bn}{B_1^\prime}
\newcommand{\Bnm}{B_1^{\prime,-}}
\newcommand{\wde}{w^{\eps,\delta}}
\newcommand{\wdee}{w^{\eps,\delta_\eps}}


%
%

\begin{document}

	%
\pagestyle{empty}
\pagestyle{myheadings}
\markboth%
{\underline{\centerline{\hfill\footnotesize%
\textsc{M. Focardi \& E. Spadaro}%
\vphantom{,}\hfill}}}%
{\underline{\centerline{\hfill\footnotesize%
\textsc{Epiperimetric inequality for thin obstacles}%
\vphantom{,}\hfill}}}

	%
\thispagestyle{empty}

~\vskip -1.1 cm

	%

\vspace{1.7 cm}

	%
{\Large\sl\centering
An epiperimetric inequality for the thin obstacle problem
\\
}

\vspace{.4 cm}

	%
\centerline{\sc Matteo Focardi \& Emanuele Spadaro}

\vspace{.8 cm}

{\rightskip 1 cm
\leftskip 1 cm
\parindent 0 pt
\footnotesize

	%
{\sc Abstract.}
We prove an epiperimetric inequality for the thin obstacle problem, thus extending the pioneering 
results by Weiss on the classical obstacle problem (\textit{Invent. Math.}, 138 (1999), no.~1, 23--50).
This inequality provides the means to study the rate of converge of the rescaled solutions to their 
limits, as well as the regularity properties of the free boundary.
\par
\medskip\noindent
{\sc Keywords:} Epiperimetric inequality, thin obstacle problem, regularity of the
free boundary.
\par
\medskip\noindent
{\sc MSC (2010): 35R35.}

\par
}

%
%
\section{Introduction}
In this paper we develop a homogeneity improvement approach to the thin obstacle problem following the
pioneering work by Weiss \cite{Weiss} for the classical obstacle problem.
For the sake of simplicity we have restricted ourselves to the simplest case of the
\textit{Signorini problem} consisting in minimizing the Dirichlet energy
among functions with positive traces on a given hyperplane.
Loosely speaking we show that around suitable free boundary points
the solutions quantitatively improve the degree of their homogeneity. This fact implies
a number of consequences for the study of the regularity property of the free boundary itself
as recalled in what follows.

In order to explain the main results of the paper we recall some of the basic known facts
on the thin obstacle problem that are most relevant for our purposes.
We consider the minimizers of the Dirichlet energy
\begin{equation*}
\EEE(u):=\int_{\Bp}|\nabla u|^2dx
\end{equation*}
in the class of admissible functions
\begin{equation}\label{e:admiss}
\mathscr{A}_w:=\left\{u\in H^1(\Bp):\,u\geq 0\,\, \text{ on }\Bn,\,
u=w\, \text{ on }(\partial B_1)^+\right\},
\end{equation}
where for any subset $A\subseteq\Rn$ we shall indicate by $A^+$ 
the set $A\cap\{x\in\Rn:\,x_n>0\}$, $\Bn:=\partial\Bp\cap\{x_n=0\}$.
The function
$w\in H^1(\Bp)$ above prescribes
the boundary conditions on $(\partial B_1)^+$ and satisfies
the obvious compatibility condition $w\geq 0$ on $\Bn$
(in the usual sense of traces).
For the sake of convenience in what follows we shall automatically extend every function 
in $\mathscr{A}_w$ by even symmetry.
For $u\in\mathrm{argmin}_{\mathscr{A}_w}\EEE$ we denote by $\Lambda(u)$ its coincidence set, 
i.e.~the set where the solution touches the obstacle
\[
\Lambda(u) := \big\{(\hat x, 0) \in \Bn\,: u(\hat x, 0) =0 \big\},
\]
and by $\Gamma(u)$ the free boundary, namely the topological boundary
of $\Lambda(u)$ in the relative topology of $\Bn$.

Points in the free boundary of $u$ can be classified according to their frequencies. 
Indeed, Athanasopoulos, Caffarelli and Salsa have established in \cite[Lemma~1]{Athan-Caff-Sal} 
that in every free boundary point $x_0$ Almgren's type frequency function
\[
(0, 1-|x_0|) \ni r\mapsto N^{x_0}(r,u):=
\frac{r\,\int_{B_r(x_0)}|\nabla u|^2\,dx}{\int_{\de B_r(x_0)}u^2\, d\cHn} 
\]
is nondecreasing and has a finite limit as $r\downarrow 0$ satisfying 
$N^{x_0}(0^+,u)\in[\sfrac32,\infty)$. Clearly, $N^{x_0}(r,u)$ is well-defined if 
$u\vert_{\de B_r(x_0)} \not\equiv 0$, otherwise once can prove that actually $u\equiv 0$
in $B_r(x_0)$. 

Following the original works by Weiss \cite{Weiss98,Weiss}, Garofalo and Petrosyan \cite{Garofalo-Petrosyan}
have then introduced a family of monotonicity formulas 
exploiting a parametrized family of \emph{boundary adjusted energies \`a la} Weiss: 
for 
$x_0 \in \Gamma(u)$, $\lambda>0$ and $r\in (0,1-|x_0|)$ 
\[
W_{\lambda}^{x_0} (r, u) := \frac{1}{r^{n+1}} \int_{B_r(x_0)}|\nabla u|^2\,dx - 
\frac{\lambda}{r^{n+2}}\int_{\de B_r(x_0)} u^2\, d\cHn.
\]
\cite[Theorem~1.4.1]{Garofalo-Petrosyan} shows that the boundary adjusted energy $W_{\lambda}^{x_0}$ 
corresponding to $\lambda=N^{x_0}(0^+,u)$ is monotone non-decreasing. More precisely, 
\begin{equation}\label{e:W}
\frac{d}{dr}W_{\lambda}^{x_0} (r, u)  =
\frac{2}{r^{n+2\lambda}} \int_{\de B_r(x_0)} \left(\nabla u \cdot x - \lambda u \right)^2 d\cHn. 
\end{equation}
Note that the right-hand side of \eqref{e:W} measures the distance of $u$ from a $\lambda$-homogeneous
function, and essentially explains why suitable rescalings of $u$ converge to homogeneous functions.

\medskip

In this paper we show that, analogously to the case of the classical obstacle problem as discovered by Weiss 
\cite{Weiss}, there are classes of points $x_0$ of the free boundary of $u$ where the monotonicity of 
$W_{\lambda}^{x_0}$ can be explicitly quantified, meaning that there exist constants $\gamma, r_0, C >0$ 
such that
\begin{equation}\label{e:Wbis}
W_{\lambda}^{x_0} (r, u)  \leq C\, r^{\gamma} \quad\forall\;r \in (0, r_0), 
\end{equation}
thus leading to the above mentioned homogeneity improvement of the solutions.
Rather than explaining the important consequences of \eqref{e:Wbis} (for which we give only a small 
essay in \S~\ref{s:free boundary}), we focus in this paper on the way
\eqref{e:Wbis} is proven, i.e.~by means of what Weiss called
\textit{epiperimetric inequality} in homage to Reifenberg's famous result \cite{Reifenberg}
on minimal surfaces.
In order to give an idea of the topic, let us discuss here the case of lowest frequency 
$\lambda = \sfrac32$: roughly speaking, in this case the epiperimetric inequality asserts that,
for dimensional constants $\kappa$, $\delta>0$ if $c \in H^1(B_1)$ is a $\sfrac32$-homogeneous function 
with positive trace on $\Bn$ that is $\delta$-close to the cone of $\sfrac32$-homogeneous global 
solutions, then there exists a function $v$ with the same boundary values of $c$ such that
\begin{equation}\label{e:epi giocattolo}
W^0_{\sfrac32}(1,v) \leq (1-\kappa)\,W^0_{\sfrac32}(1,c).
\end{equation}
The derivation of \eqref{e:Wbis} from the epiperimetric inequality 
\eqref{e:epi giocattolo} is then done via simple algebraic relations on the 
boundary adjusted energy $W^0_{\sfrac32}$, linking its derivative with
the energy of the $\sfrac32$-homogeneous extension of the boundary values
of the solution. Comparing the energy of the latter with that of the solution
itself leads to a differential inequality finally implying \eqref{e:Wbis} 
(cp.~\S~\ref{s:free boundary} for the details).

The main focus of the present note is however the method of proving \eqref{e:epi giocattolo}.
There are indeed only few examples of problems in geometric analysis where
such kind of inequality has been established, with a number of far-reaching applications and
consequences.
The first instance is the remarkable work by Reifenberg \cite{Reifenberg} on minimal surfaces,
then successfully extended in various directions: by Taylor \cite{Taylor} for what concerns soap-films 
and soap-bubbles minimal surfaces, by White \cite{White} for tangent cones to two-dimensional 
area minimizing integral currents, by Chang \cite{Chang} for the analysis of branch points
in two-dimensional area minimizing integral currents, and by De Lellis and Spadaro 
\cite{DL-Sp-Memo} for two-dimensional multiple valued functions minimizing the generalized
Dirichlet energy.
In all of these instances the proof of the epiperimetric inequality is constructive, i.e.~it is performed
via an explicit computation of the energy of a suitable comparison solution, most of the time allowing
to give an explicit bound on the constant $\kappa$.

On the other hand, the proof given by Weiss for the classical obstacle problem \cite{Weiss} is indirect,
it exploits an infinite-dimensional version of a simple stability argument: namely, if $\phi \in C^2(\R^n)$
satisfies $\nabla \phi(y_0) =0$ and $D^2\phi(y_0)$ is positive definite, then $y_0 \in \R^n$ is
an attractive point for the dynamical system $\dot y = \nabla \phi(y)$.
In regard to this, it is worth mentioning that the infinite-dimensional extension of the quoted stability 
argument is in general subtle. For what concerns the present paper, the energy involved in the obstacle problem 
is not regular, and in addition, its second variation is not positive definite since there are entire 
directions where the functional is constant.
This point of view shares some similarities with the approach by Allard and Almgren \cite{AA} 
and by Simon \cite{Simon} for the analysis of the asymptotic of minimal cones with isolated 
singularities.
\medskip

Our proof of the epiperimetric inequality for the thin obstacle problem is inspired by the 
fundamental paper by Weiss \cite{Weiss}. For instance, following Weiss \cite{Weiss} the 
method of proof is a contradiction argument.
However, rather than faithfully reproducing the whole proof in \cite{Weiss}, we underline 
two variational principles at the heart of it, that are more likely to be generalized to other 
contexts. 
In the contradiction argument we note that the failure of 
the epiperimetric inequality leads to a quasi-minimality condition for a sequence of auxiliary 
functionals related to the second variation of the original energy. 
The goal is then to understand the asymptotic behavior of such new energies. Indeed, 
the minimizers of their $\Gamma$-limits characterize the directions along which the 
epiperimetric inequality may fail. To exclude this occurrence, another variational argument 
leads to an orthogonality condition between the minimizers of the mentioned 
$\Gamma$-limits and a suitable tangent cone to the spaces of blowups, thus giving a contradiction.

We think that this scheme based on two competing variational principles can be applied in many other 
problems. For this reason, in order to make it as transparent as possible, we have detailed the proof of our 
main results in several steps, hoping that this effort could be useful for the reader.
\smallskip

We are able to prove the epiperimetric inequality for free boundary points in the following two classes:
\begin{itemize}
\item[(1)] for points with lowest frequency $\sfrac32$ (cp. Theorem~\ref{t:epithin32});
\item[(2)] for \textit{isolated} points of the free boundary with frequency $2m$, $m\in\N\setminus\{0\}$
(cp. Theorem~\ref{t:epithin2m}).
\end{itemize}
The condition in (2) for the free boundary points being isolated can equivalently be rephrased
in terms of the properties of the blowup functions. As explained in \S~\ref{s:preliminaries},
they correspond to the points where the blowups are everywhere positive except at the origin.
We remark that it is still an open problem (except for dimension $n=2$) to classify all the possible
limiting values of the frequency: apart from the quoted lower bound $N^{x_0}(0^+, u) \geq \sfrac32$ 
for every $x_0 \in \Gamma(u)$, there are examples of free boundary points with limiting frequency 
equal to $\sfrac{(2m+1)}2$ and $2m$ for every $m\in \N\setminus\{0\}$ (in dimension $n=2$ these are 
the unique possible values).

With given an even frequency, the limitation on the set of points analyzed is an outcome of our indirect 
approach. 
As already explained by Weiss for the classical obstacle problem \cite{Weiss}, for capacitary reasons
all the possible cases cannot be covered. This turns out to be evident from the asymptotic analysis 
in Theorems~\ref{t:epithin32} and \ref{t:epithin2m}. The $\Gamma$-convergence (w.r.to the weak 
$H^1$ topology) of a family of functionals subject to unilateral obstacle conditions is under study. 
The limit obstacle condition is lost if it is imposed on a set of dimension less than or equal to $n-2$.
We are not aware of any direct argument to prove the epiperimetric
inequality for the obstacle problem via comparison solutions, unlike the case of minimal surfaces: 
it has to be expected that, if found, it could be used to cover all the remaining cases.
\smallskip

After the completion of this paper, we discovered that our results have a big overlap with those 
contained in a very recent preprint by Garofalo, Petrosyan and Smit Vega Garcia \cite{Garofalo-Petrosyan-Smit}.
In this paper the Authors investigate the regularity of the points with least frequency of the
free boundary of the Signorini problem with variable coefficients as a consequence of the 
epiperimetric inequality, following the energetic approach developed for the classical obstacle problem 
by the Authors of the present note and Gelli in \cite{Focardi-Gelli-Spadaro}.
In particular, in \cite{Garofalo-Petrosyan-Smit} the epiperimetric inequality
in case (1) above is proved and the results on the regularity of the free boundary covers 
the ones in \S~\ref{s:free boundary}.
Despite this, we think that the remaining cases of the epiperimetric inequality in (2) are interesting, 
and furthermore we believe that the variational approach to the epiperimetric inequality we have developed
can be generalized to other contexts and therefore that it is worth being shared with the community.
\smallskip

The paper is organized in the following way.
In \S~\ref{s:preliminaries} we give the necessary preliminaries in order to state
the epiperimetric inequality. Then in \S~\ref{s:epi} we provide the proof of the main
results. We state two versions of the epiperimetric inequality covering
the case (1) and (2) above separately, in Theorem~\ref{t:epithin32} and Theorem~\ref{t:epithin2m}
respectively. The proofs of these two results are divided into a sequence of steps, that 
as explained above are meant to give a clear overview on the structure of the proof. 
Since the two proofs are very much similar, we provide all the details for the first case 
and for what concerns the second one, where actually several simplifications occur, we only point 
out the main differences. Finally in \S~\ref{s:free boundary} we prove the regularity of the free
boundary near points of least frequency as a consequence of the epiperimetric inequality.

%
%
\section*{Acknowledgements}
For this research E.~Spadaro has been partially supported by GNAMPA
Gruppo Nazionale per l'Analisi Matematica, 
la Probabilit\`a e le loro Applicazioni of the Istituto Nazionale di Alta Matematica (INdAM) through 
a Visiting Professor Fellowship. 
E.~Spadaro is very grateful to the DiMaI ``U. Dini'' of the University of Firenze 
for the support during the visiting period.

Part of this work was conceived when M. Focardi was visiting the Max-Planck-Institut in Leipzig
in several occurrences. 
He would like to warmly thank the Institute for providing a very stimulating scientific atmosphere
and for all the support received.

M.~Focardi is member of the Gruppo Nazionale per l'Analisi Matematica, la Probabilit\`a e le loro 
Applicazioni (GNAMPA) of the Istituto Nazionale di Alta Matematica (INdAM). 

%
%
\section{Notation and preliminaries}\label{s:preliminaries}
The open ball in the Euclidean space $\Rn$ centered in $x$
and with radius $r$ is denoted by $B_r(x)$,
and by $B_r$ if the center is the origin.
We recall that, given any subset $A\subseteq\Rn$,
we shall indicate by $A^+$ 
the set $A\cap\{x\in\Rn:\,x_n>0\}$. Moreover we set
\[
\Bn:=\partial\Bp\cap\{x_n=0\} \quad \text{and}\quad 
\Bnm:=\Bn\cap\{x_{n-1}\leq 0\}.
\]
The Euclidean scalar product in $\Rn$ among the vectors $\xi_1$ and
$\xi_2$ shall be denoted by $\xi_1\cdot\xi_2$.
The scalar product in the Sobolev space $H^1(B_1)$ shall be
denoted by $\langle\cdot,\cdot\rangle$, and the corresponding norm
by $\|\cdot\|_{H^1}$.


We introduce a parametrized family of \emph{boundary adjusted energies \`a la} Weiss \cite{Weiss}: namely, 
given $\lambda>0$ for every $u\in H^1(B_1)$, we set
\begin{equation}\label{e:adjenrg}
 \GGG_\lambda(u):=\int_{B_1}|\nabla u|^2dx-\lambda\int_{\partial B_1}u^2d\cHn,
\end{equation}
and note that for all $u_1,\,u_2\in\mathscr{A}_w$ in the admissible class
\ref{e:admiss} it holds
\[
\GGG_\lambda(u_1)-\GGG_\lambda(u_2)=2\big(\EEE(u_1)-\EEE(u_2)\big).
\]
Throughout the whole paper we shall be interested only in range of values
$\lambda\in\{\sfrac 32\}\cup\{2m\}_{m\in\N}$.

\subsection{$\sfrac32$-homogeneous solutions}
Next, let $x=(\hat{x},x_n)\in\Rn$, and adopt the notation 
$\bS^{n-2}=\bS^{n-1}\cap\{x_n=0\}$.
Define for $e\in \bS^{n-2}$ 
\[
h_e(x):=
\left(2\,(\hat x \cdot e) - \sqrt{(\hat x \cdot e)^2 + x_n^2} \right)
\sqrt{\sqrt{(\hat{x} \cdot e)^2+x_n^2}+\hat{x} \cdot e}\,.
\]
It is easy to check that
\[
h_e(x)={\sqrt2}\,\mathrm{Re}\big[(\hat{x} \cdot e+i|x_n|)^{\sfrac32}\big],
\]
where one chooses the determination of the complex root satisfying
$h_e\geq 0$ on $\{x_n=0\}$.
Moreover the following properties of $h_e$ hold true:
\begin{enumerate}
\item $h_e$ is even w.r.t.~$\{x_n=0\}$, i.e.
\[
h_e(\hat x, -x_n) = h_e (\hat x, x_n) \quad \forall\; (\hat x, x_n) \in \R^n;
\]
\item $h_e\geq 0$ on $\{x_n=0\}$ and  $h_e=0$ on $\{x_n=0,\; x \cdot e\leq 0\}$;
\item $h_e$ is harmonic on $B_1^+\cup B_1^-$;
\item $h_e\vert_{\{x_n\geq 0\}}$ is $C^{1,\sfrac12}$,
i.e.~there exists $H_e \in \C^{1, \sfrac12}(\R^n)$ such that
\[
H_e\vert_{\{x_n\geq 0\}} = h_e\vert_{\{x_n\geq 0\}};
\]
\item for $x_n>0$ we have
\begin{align}\label{e:dernorm}
\frac{\partial h_e}{\partial x_n}(\hat{x},x_n)
=&-\frac{x_n}{\sqrt{(\hat{x} \cdot e)^2+x_n^2}}\,\sqrt{\sqrt{(\hat{x} \cdot e)^2 + x_n^2} + \hat{x} \cdot \nu}\notag\\
&+
\frac{2\,(\hat x \cdot e) - \sqrt{(\hat x \cdot e)^2 + x_n^2}}{2\,\sqrt{\sqrt{(\hat{x} \cdot e)^2+x_n^2} + \hat{x} \cdot e}}\,
\frac{x_n}{\sqrt{(\hat{x} \cdot e)^2+x_n^2}}.
\end{align}
In particular,
\begin{align}\label{e:dernorm2}
\frac{\partial h_e}{\partial x_n}(\hat{x},0^+)
&:=\lim_{x_n\downarrow 0}\frac{\partial h_e}{\partial x_n}(\hat{x},x_n)\notag\\
&=
\begin{cases}
-\frac{3}{2}|\hat{x}\cdot e|^{\sfrac{3}{4}}
& \text{ on $\{x_n=0, \;x \cdot e< 0\}$} \\
0 & \text{ on $\{x_n=0,\,x \cdot e\geq 0\}$,}
\end{cases}
\end{align}
and by (2)
\begin{equation}\label{e:hhn}
h_e(\hat x, 0)\,\frac{\partial h_e}{\partial x_n}(\hat{x},0^+)=0\quad\text{ on $\{x_n=0\}$}.
\end{equation}
\end{enumerate}

\medskip

We introduce next the set of blow-ups at the \emph{regular points} of the free boundary:
\[
\mathscr{H}_{\sfrac32}:=\left\{\lambda\, h_e:\,e\in\bS^{n-2}, \; \lambda \in [0,\infty) \right\} \subset H^1_{\loc}(\R^n).
\]
Note that $\mathscr{H}_{\sfrac32}$ is a cone in $H^1_{\loc}(\R^n)$,
the restrictions
\[
\mathscr{H}_{\sfrac32}\vert_{B_1} := \left\{f\vert_{B_1}\,: f\in \mathscr{H}_{\sfrac32} \right\} \subset H^1(B_1)
\]
is a closed set and $\mathscr{H}_{\sfrac32}\setminus \{0\}$
is parametrized by an $(n-1)$-dimensional manifold by the map
\[
\bS^{n-2} \times  (0,+\infty) \ni (e, \lambda) \quad \stackrel{\Phi}{\longmapsto} \quad \lambda\, h_e \in \mathscr{H}_{\sfrac32}\setminus\{0\}.
\]
We can then introduce the tangent space to $\mathscr{H}_{\sfrac32}$ at any
point $\lambda_0 h_{e_0}$ as
\begin{equation}\label{e:tg-1}
T_{\lambda\,h_e}\mathscr{H}_{\sfrac32} := \left\{
d_{(\lambda_0, e_0)}\Phi (\xi, \alpha) :\; \xi\cdot e_n = \xi \cdot e_0 =0,\; \alpha \in \R \right\},
\end{equation}
and notice that 
\begin{equation}\label{e:tg}
T_{\lambda\,h_e}\mathscr{H}_{\sfrac32}=
\left\{\alpha\,h_e + v_{e,\xi}:\;
 \xi\cdot e_n = \xi \cdot e_0 =0,\; \alpha \in \R
\right\},
\end{equation}
where we have set
\begin{equation}\label{e:vnuxi}
v_{e,\xi}(x):=
(\hat{x} \cdot \xi)\,\sqrt{\sqrt{(\hat{x} \cdot e)^2+x^2_n}+\hat{x} \cdot e}\,
\end{equation}
Note moreover that
\[
v_{e,\xi}(x) = 
\sqrt{2}\,(\hat{x} \cdot \xi)\, \mathrm{Re}\big[(\hat{x} \cdot e+i|x_n|)^{\sfrac12}\big],
\]
where the determination of the complex square root is chosen in such a way that 
$v_{e,\xi} \geq 0$ in $\{x_n=0\}$.

\medskip

Let us now highlight some additional properties enjoyed by functions $\psi\in\mathscr{H}_{\sfrac32}$. 
For any given $\varphi\in H^1(B_1)$, a simple integration by parts yields
\begin{align*}
\int_{B_1^+}\nabla \psi\cdot\nabla\varphi\,dx=& \int_{B_1^+}\mathrm{div}(\varphi\nabla \psi)dx\notag\\
=& \int_{(\partial B_1)^+}\varphi\frac{\de \psi}{\de\nu}d\cHn
-\int_{\Bn}\varphi\,\frac{\partial \psi}{\partial x_n}(\hat{x},0^+)d\cHn\notag\\
=& \frac32\int_{(\partial B_1)^+}\varphi\,\psi\,d\cHn
-\int_{\Bn}\varphi\,\frac{\partial \psi}{\partial x_n}(\hat{x},0^+)d\cHn,
\end{align*}
where $\nu = \frac{x}{|x|}$ and
we used that $\psi$ is $\sfrac32$-homogeneous and $\Delta \psi =0$ in $B_1^+$.
Therefore, by the even symmetry of $\psi$ we conclude
\begin{equation}\label{e:int-by-parts}
\int_{B_1}\nabla \psi\cdot\nabla\varphi\,dx=
\frac32\int_{\partial B_1}\varphi\,\psi\,d\cHn
-2\int_{\Bn}\varphi\,\frac{\partial \psi}{\partial x_n}(\hat{x},0^+)d\cHn.
\end{equation}
In particular, \eqref{e:int-by-parts} yields that the first variation of $\GGG_{\sfrac 32}$ at 
$\psi\in\mathscr{H}_{\sfrac32}$ in the direction $\varphi\in H^1(B_1)$, formally defined as
\[
\delta\GGG_{\sfrac 32}(\psi)[\varphi]:=
2\int_{B_1}\nabla \psi\cdot\nabla\varphi\,dx
-3\int_{\partial B_1}\psi\,\varphi\,d\cHn,
\]
satisfies
\begin{equation}\label{e:firstvar}
\delta\GGG_{\sfrac 32}(\psi)[\varphi]
=-4\int_{\Bn}\varphi\,\frac{\partial \psi}{\partial x_n}(\hat{x},0^+)d\cHn.
\end{equation}
Furthermore, by taking into account \eqref{e:hhn} and \eqref{e:int-by-parts} 
applied to $\varphi=\psi$, we get
\begin{equation}\label{e:GGGh}
\GGG_{\sfrac 32}(\psi)=0\qquad \text{for all $\psi\in\mathscr{H}_{\sfrac32}$}.
\end{equation}

\subsection{$2m$-homogeneous solutions}
We introduce the even homogeneous solutions representing
the lowest stratum in the singular part of the free-boundary according to 
\cite[Theorem~1.3.8]{Garofalo-Petrosyan}.

More precisely, given $m\in\N\setminus\{0\}$
consider the closed  convex cone of $H^1(B_1)$
\[
\mathscr{H}_{2m}:=\Big\{\psi:\, \psi\text{ $2m$-homogeneous},\, 
\, \triangle \psi=0\,\text{in}\,\R^n,\, \psi=\psi(\hat{x},|x_n|),\,\psi(\hat{x},0)\geq 0\Big\}.
\]
Note that all the functions in $\mathscr{H}_{2m}$ are actually harmonic polynomials 
on $\R^n$ satisfying
\begin{equation}\label{e:dernorm2m}
 \frac{\partial \psi}{\partial x_n}=0\,\, \text{ on }\Bn\,.
\end{equation}
Given $\psi\in\mathscr{H}_{2m}$ and $\varphi\in H^1(B_1)$, an integration by parts 
leads to 
\begin{equation}\label{e:int-by-parts2m}
\int_{B_1}\nabla \psi\cdot\nabla\varphi\,dx=2m\int_{\partial B_1}\varphi\,\psi\,d\cHn.
\end{equation}
Therefore, setting
\[
\delta\GGG_{2m}(\psi)[\varphi] := 2\int_{B_1}\nabla \psi\cdot\nabla\varphi\,dx - 4m\int_{\partial B_1}\varphi\,\psi\,d\cHn,
\]
it is immediate to check that 
\begin{equation}\label{e:firstvar2m}
\delta\GGG_{2m}(\psi)[\varphi]=0\qquad\text{for all $\psi\in\mathscr{H}_{2m}$ and all $\varphi\in H^1(B_1)$,}
\end{equation}
and
\begin{equation}\label{e:GGG2m}
\GGG_{2m}(\psi)=0\qquad \text{for all $\psi\in\mathscr{H}_{2m}$}.
\end{equation}
We can group the functions in $\mathscr{H}_{2m}$ according to the dimension of their invariant 
subspace as follows: for $\psi\in\mathscr{H}_{2m}$ consider the subspace 
\[
 \Pi_\psi=\{\xi\in\R^{n-1}:\,\psi(\hat{x},0)=\psi(\hat{x}+\xi,0)\,\,\,\forall \hat{x}\in\R^{n-1} \},
\]
and, for $d\in\{0,\ldots, n-2\}$ define
\[
 \mathscr{H}_{2m}^{(d)}:=\left\{\psi\in\mathscr{H}_{2m}:\,\dim\Pi_\psi=d\right\}.
\]
In what follows we shall be dealing only with the lowest stratum $\mathscr{H}_{2m}^{(0)}$ for which
we provide the ensuing alternative characterization.
\begin{proposition}
$\psi\in\mathscr{H}_{2m}^{(0)}$ if and only if $\psi(\hat{x},0)>0$ for every $\hat{x}\neq 0$.
\end{proposition}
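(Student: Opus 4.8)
The plan is to prove the two implications separately, using the fact (recalled from \cite[Theorem~1.3.8]{Garofalo-Petrosyan}) that every $\psi\in\mathscr{H}_{2m}$ is a harmonic polynomial on $\R^n$, even in $x_n$, with $\psi(\hat x,0)\ge0$. The key preliminary observation is that $\Pi_\psi$, as defined, is genuinely a linear subspace of $\R^{n-1}$: indeed $p(\hat x):=\psi(\hat x,0)$ is a (homogeneous, degree $2m$) polynomial on $\R^{n-1}$, and $\xi\in\Pi_\psi$ means $p$ is invariant under the translation $\hat x\mapsto \hat x+\xi$; differentiating in $\xi$ shows this forces $\partial_v p\equiv 0$ for $v$ in the span of $\Pi_\psi$, so $\Pi_\psi$ is closed under scalar multiplication and, combining translations, under addition. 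Hence $\dim\Pi_\psi$ is well defined, and after a rotation $p$ depends only on the first $n-1-d$ coordinates of $\hat x$, where $d=\dim\Pi_\psi$.

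For the direction ``$\psi\in\mathscr{H}_{2m}^{(0)}\Rightarrow \psi(\hat x,0)>0$ for $\hat x\ne 0$'': suppose $d=0$ but $p(\hat x_0)=0$ for some $\hat x_0\ne0$. Since $p\ge0$ on $\R^{n-1}$, the point $\hat x_0$ is a global minimum of $p$, so $\nabla p(\hat x_0)=0$; by $2m$-homogeneity (Euler's identity $\hat x\cdot\nabla p=2m\,p$) this is automatic, so instead I would argue via the harmonic extension. First I would show that $p\equiv 0$ on the whole line $\R\hat x_0$: this follows because $p$ is homogeneous of even degree, hence $p(t\hat x_0)=t^{2m}p(\hat x_0)=0$. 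Then I would show that $p$ vanishes to infinite order — equivalently is divisible by an arbitrarily high power of a defining equation — along $\R\hat x_0$; here the cleanest route is to use that $p\ge0$ together with the structure theorem for nonnegative polynomials is heavy, so instead I would invoke the harmonicity of $\psi$ on $\Rn$ and even symmetry: $\psi$ restricted to $\{x_n=0\}$ equals $p$, and $\Delta\psi=0$ with $\partial_{x_n}\psi=0$ on $\{x_n=0\}$ (equation \eqref{e:dernorm2m}) forces $\partial_{x_n}^2\psi(\hat x,0)=-\Delta_{\hat x}p(\hat x)$. The real point, following Garofalo--Petrosyan, is that the zero set $\{p=0\}$ is a linear subspace: because $p$ is a nonnegative polynomial vanishing at $\hat x_0$, all first derivatives vanish there, and an induction on the order of vanishing (using that a nonnegative polynomial's Taylor expansion at a zero starts with a nonnegative-definite quadratic form) shows the tangent cone to $\{p=0\}$ at $\hat x_0$ is a linear space contained in $\{p=0\}$; by homogeneity this space is translation-invariant for $p$, so it sits inside $\Pi_\psi$, contradicting $d=0$ unless $\hat x_0=0$.

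For the converse ``$\psi(\hat x,0)>0$ for all $\hat x\ne0\Rightarrow d=0$'': this is the easy direction. If $d\ge1$, pick $0\ne\xi\in\Pi_\psi$; then $p(t\xi)=p(0)=0$ for all $t$ (evaluating the translation invariance at $\hat x=0$ and using $p(0)=0$ by homogeneity of positive degree), so $p$ vanishes at the nonzero point $\xi$, contradicting strict positivity. Hence $\Pi_\psi=\{0\}$, i.e.\ $\psi\in\mathscr{H}_{2m}^{(0)}$.

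The main obstacle is the first implication, specifically establishing that the zero set of the nonnegative harmonic-trace polynomial $p$ is a linear subspace (so that every zero direction produces an element of $\Pi_\psi$). I expect to handle this by the standard argument: at a zero $\hat x_0$ of the nonnegative polynomial $p$, write the Taylor expansion $p(\hat x_0+h)=Q(h)+O(|h|^3)$ with $Q\ge0$ a quadratic form; $\ker Q$ is contained in the set of directions along which $p$ vanishes to higher order, and iterating (or directly using real-analytic/homogeneity structure together with $\Delta\psi=0$) upgrades this to full vanishing of $p$ along $\hat x_0+\ker Q$ and, after intersecting over all zeros and using $2m$-homogeneity, to translation invariance. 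This is exactly the ingredient behind the stratification $\mathscr{H}_{2m}=\bigcup_d\mathscr{H}_{2m}^{(d)}$, so I would cite \cite[Theorem~1.3.8]{Garofalo-Petrosyan} for the fact that $\{\psi(\cdot,0)=0\}$ is a linear subspace of dimension $n-1-d$ and then the equivalence is immediate.
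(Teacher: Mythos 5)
Your preliminary observation that $\Pi_\psi$ equals $\{\xi\in\R^{n-1}:\partial_\xi\psi(\cdot,0)\equiv0\}$ and is therefore a linear subspace is correct (cleanest justification: if $\xi\in\Pi_\psi$ then $t\mapsto\psi(\hat x+t\xi,0)-\psi(\hat x,0)$ is a polynomial in $t$ vanishing at every integer, hence identically, so $\partial_\xi\psi(\cdot,0)\equiv0$), and the implication ``$\psi(\hat x,0)>0$ for $\hat x\neq0$ $\Rightarrow$ $\dim\Pi_\psi=0$'' is correctly proved. The gap is in the other implication. Writing $p:=\psi(\cdot,0)$ and $Z:=\{p=0\}$, your argument reduces everything to the inclusion $Z\subseteq\Pi_\psi$, which you justify with ``by homogeneity this space is translation-invariant for $p$'' and with a citation of \cite[Theorem~1.3.8]{Garofalo-Petrosyan} for ``$Z$ is a linear subspace of dimension $n-1-d$''. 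Homogeneity only gives the \emph{reverse} inclusion $\Pi_\psi\subseteq Z$ (take $\hat x=0$ in the invariance identity) and the fact that $Z$ is a cone; it does not give $\partial_{\hat x_0}p\equiv0$ for $\hat x_0\in Z$. The cited theorem concerns the invariance subspace, not the zero set, and (even granting $Z$ linear) its dimension would be $d$, not $n-1-d$. So the one step that carries the whole forward implication is unproved.

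Moreover, that step genuinely fails for the class $\mathscr{H}_{2m}$ as defined. Take $n=3$, $m=2$ and
\[
\psi(x)=x_2^2\,(x_1^2+x_2^2)-(x_1^2+7x_2^2)\,x_3^2+\tfrac{4}{3}\,x_3^4 .
\]
One checks that $\Delta\psi=0$, $\psi$ is $4$-homogeneous and even in $x_3$, and $p(\hat x)=x_2^2(x_1^2+x_2^2)\geq0$, so $\psi\in\mathscr{H}_4$. Since $a\,\partial_1p+b\,\partial_2p=2a\,x_1x_2^2+2b\,x_1^2x_2+4b\,x_2^3$ vanishes identically only for $a=b=0$, we have $\Pi_\psi=\{0\}$, i.e.\ $\psi\in\mathscr{H}_4^{(0)}$; yet $Z=\{x_2=0\}$ is a full line along which $p$ is not translation-invariant. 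Hence the inclusion $Z\subseteq\Pi_\psi$ you rely on is false, and your route cannot be repaired by a better nodal-set argument: for $m\geq2$ the zero set and the invariance subspace of such a $p$ need not coincide (they do for $m=1$, where $p$ is a nonnegative quadratic form and $Z=\ker p=\Pi_\psi$), so this example should also be confronted with the statement itself. Note finally that the paper's proof does not go through any stratification result: it is a direct algebraic argument, writing $\psi=x_1^{2\ell}q$ with $q$ not divisible by $x_1$ and letting $x_1\to0$ in $0=\Delta\psi=2\ell(2\ell-1)x_1^{2(\ell-1)}q+4\ell x_1^{2\ell-1}\partial_1q+x_1^{2\ell}\Delta q$; the above $\psi$ is not divisible by $x_1$, so the same example must be kept in mind when reading that argument as well.
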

\begin{proof}
 First note that by the $2m$-homogeneity the condition $\psi|_{\Bn}\geq 0$ 
 implies that actually $\psi(\cdot,0)$ is even w.r.t. $x_i$, for $i\in\{1,\ldots,n-1\}$.
 
 Suppose then by contradiction that $\psi(\hat{y},0)=0$ for some $\hat{y}\neq 0$, and  
 without loss of generality assume that $\sfrac{\hat{y}}{|\hat{y}|}=e_1$. 
 Hence, $\psi(x_1,0,\ldots,0)=0$ for all $x_1\in\R$,
 and we may find a $2(m-\ell)$-homogeneous polynomial $q$, $1\leq \ell<m$, such that 
 $\psi(x)=x_1^{2\ell}q(x)$ and $q(0,x_2,\ldots,x_n)$ is not identically zero. 
 Computing the Laplacian of $\psi$ we get
 \[
  0=\triangle\psi=2\ell(2\ell-1)\,x_1^{2(\ell-1)}q+4\ell x_1^{2\ell-1}\,\frac{\partial q}{\partial x_1}
  +x_1^{2\ell}\triangle q,
 \]
 and thus for all $x_1\neq 0$ we have
\[
 2\ell(2\ell-1)\,q+4\ell x_1\,\frac{\partial q}{\partial x_1}+x_1^2\triangle q=0.
\]
In turn, by letting $x_1\to 0$ we conclude that $q(0,x_2,\ldots,x_n)\equiv 0$, a contradiction.
 \end{proof}
In view of the latter result, it is easy to check that for every $\psi\in\mathscr{H}_{2m}^{(0)}$ the supporting tangent cone to $\mathscr{H}_{2m}^{(0)}$
is given by
\begin{equation}\label{e:tg2m0}
T_\psi\mathscr{H}_{2m}^{(0)}=\widehat{\mathscr{H}}_{2m}:=\Big\{p:\, p \text{ $2m$-homogeneous},\, \triangle p=0\,\text{in}\,\R^n,\, 
p=p(\hat{x},|x_n|)\Big\},
\end{equation}
therefore in particular $ \widehat{\mathscr{H}}_{2m}\supset\mathscr{H}_{2m}$. 

%
%
\section{The epiperimetric inequality}\label{s:epi}
In this section we establish epiperimetric inequalities \textit{\`a la} Weiss 
for the thin obstacle problem. 
In the rest of the section we agree that a function $c \in H^1(B_1)$ is $\lambda$-homogeneous,
$\lambda\in\{\sfrac32\}\cup\{2m\}_{m\in\N\setminus\{0\}}$,
if there exists $f \in H^1_\loc(\R^n)$ which is $\lambda$-homogeneous and satisfying
$c\vert_{B_1} = f$.
Moreover, to avoid cumbersome notation, we shall use, without any risk of ambiguity,
the symbol $\mathscr{H}_{\lambda}$ also to denote the restrictions
of the blowup maps to the unit ball (which in the previous section we denoted by 
$\mathscr{H}_{\lambda}\vert_{B_1}$). Moreover, given $\mathscr{K}\subset H^1(B_1)$ closed set, we define
\[
\dist_{H^1}(c,\mathscr{K}) := \min \left\{ \| c - \ph\|_{H_1(B_1)},\; \ph \in \mathscr{K}\right\}.
\]

\medskip

We are now ready to state the main results of the paper.
\begin{theorem}\label{t:epithin32}
There exist dimensional constants $\kappa\in(0,1)$ and $\delta>0$ such that
if $c\in H^1(B_1)$ is a $\sfrac32$-homogeneous function with
$c\geq 0$ on $\Bn$ and
\begin{equation}\label{e:dist}
 {\mathrm{dist}_{H^1}}\big(c,\mathscr{H}_{\sfrac32}\big)\leq\delta,
\end{equation}
then
\begin{equation}\label{e:epi}
 \inf_{v\in \mathscr{A}_c}\GGG_{\sfrac32}(v)\leq (1-\kappa)\GGG_{\sfrac32}(c).
\end{equation}
\end{theorem}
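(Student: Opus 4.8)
The plan is to argue by contradiction along the lines sketched in the introduction, organising the argument around two competing variational principles. Suppose the statement fails: there are $\kappa_k\downarrow0$, $\delta_k\downarrow0$ and $\sfrac32$-homogeneous $c_k\in H^1(B_1)$ with $c_k\ge0$ on $\Bn$, $\dist_{H^1}(c_k,\mathscr{H}_{\sfrac32})\le\delta_k$, and $\GGG_{\sfrac32}(v)>(1-\kappa_k)\GGG_{\sfrac32}(c_k)$ for all $v\in\mathscr{A}_{c_k}$. Choosing $v=c_k$ gives $\GGG_{\sfrac32}(c_k)>0$ (so $c_k\notin\mathscr{H}_{\sfrac32}$ by \eqref{e:GGGh}); since $\int_{\de B_1}v^2\,d\cHn$ is constant on the even class $\mathscr{A}_{c_k}$, the minimum of $\GGG_{\sfrac32}$ on $\mathscr{A}_{c_k}$ is attained at the Signorini solution $u_k$ with datum $c_k$, and the failing inequality together with the variational inequality for $u_k$ yields $\int_{B_1}|\nabla(c_k-u_k)|^2\le\GGG_{\sfrac32}(c_k)-\GGG_{\sfrac32}(u_k)\le\kappa_k\GGG_{\sfrac32}(c_k)$. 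Rescaling $c_k$ down we may assume $\|c_k\|_{H^1}\le1$, so $\GGG_{\sfrac32}(c_k)$ is bounded and $\|c_k-u_k\|_{H^1}\to0$. Let $h_k=\lambda_kh_{e_k}\in\mathscr{H}_{\sfrac32}$ realise the distance, $r_k:=\|c_k-h_k\|_{H^1}\in(0,\delta_k]$, $\zeta_k:=r_k^{-1}(c_k-h_k)$, so that $c_k=h_k+r_k\zeta_k$, $\|\zeta_k\|_{H^1}=1$, and $c_k,h_k,\zeta_k$ are $\sfrac32$-homogeneous; passing to subsequences, $\lambda_k\to\lambda_\infty\ge0$, $e_k\to e_\infty\in\bS^{n-2}$, $\zeta_k\rightharpoonup\zeta_\infty$ in $H^1(B_1)$. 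We carry out the main case $\lambda_\infty>0$, and normalise $\lambda_k=1$; the case $\lambda_\infty=0$ (where $c_k\to0$ and the blow-up is performed against the trivial cone) is analogous.

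\emph{First variational principle.} By \eqref{e:GGGh} and bilinearity, $\GGG_{\sfrac32}(h_k+r_k\phi)=r_k\,\delta\GGG_{\sfrac32}(h_k)[\phi]+r_k^2\,\GGG_{\sfrac32}(\phi)$, and by \eqref{e:firstvar}, \eqref{e:dernorm2} the linear term equals $-4\int_{\Bn}\phi\,\tfrac{\de h_{e_k}}{\de x_n}(\hat x,0^+)\,d\cHn$, which is $\ge0$ as soon as $\phi\ge0$ on the coincidence set $\Lambda(h_{e_k})=\{x\cdot e_k\le0,\ x_n=0\}$ (the normal derivative being $\le0$ on $\Lambda(h_{e_k})$ and vanishing on $\Bn\setminus\Lambda(h_{e_k})$) — in particular for $\phi=\zeta_k$, since $c_k\ge0$ on $\Bn$ and $h_k=0$ on $\Lambda(h_{e_k})$. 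Writing $\mathscr{A}_{c_k}=h_k+r_k\,\mathcal{C}_k$ with $\mathcal{C}_k:=\{\phi\in H^1(B_1)\ \text{even}:\ \phi=\zeta_k\ \text{on}\ (\de B_1)^+,\ h_k+r_k\phi\ge0\ \text{on}\ \Bn\}$, the obstacle problem is equivalent to minimising over $\mathcal{C}_k$ the rescaled functional $\cF_k(\phi):=r_k^{-2}\GGG_{\sfrac32}(h_k+r_k\phi)=r_k^{-1}\delta\GGG_{\sfrac32}(h_k)[\phi]+\GGG_{\sfrac32}(\phi)$; one has $\cF_k(\zeta_k)=r_k^{-2}\GGG_{\sfrac32}(c_k)=:\beta_k>0$, while the minimiser $\eta_k:=r_k^{-1}(u_k-h_k)\in\mathcal{C}_k$ satisfies $\cF_k(\eta_k)=r_k^{-2}\GGG_{\sfrac32}(u_k)\in[(1-\kappa_k)\beta_k,\beta_k]$. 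If $\beta_k\to\infty$ along a subsequence — equivalently $r_k^{-1}\delta\GGG_{\sfrac32}(h_k)[\zeta_k]\to\infty$ — an explicit competitor in $\mathcal{C}_k$ that makes $c_k$ vanish on all but a boundary collar of $\Lambda(h_{e_k})$ cancels this dominant linear term and makes $\GGG_{\sfrac32}(v)=o(\GGG_{\sfrac32}(c_k))$, contradicting the failing inequality; hence $\sup_k\beta_k<\infty$, and then $\|\eta_k-\zeta_k\|_{H^1}^2=r_k^{-2}\|u_k-c_k\|_{H^1}^2\lesssim\kappa_k\beta_k\to0$, so $\eta_k\rightharpoonup\zeta_\infty$ and $\{\eta_k\}$ is bounded in $H^1(B_1)$.

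\emph{Second variational principle.} The auxiliary minimisations $\min_{\mathcal{C}_k}\cF_k$ should $\Gamma$-converge (weak $H^1$) to $\min\cF_\infty$, where $\cF_\infty(\phi):=\GGG_{\sfrac32}(\phi)$ if $\phi=0$ on $\Lambda(h_{e_\infty})=\{x\cdot e_\infty\le0,\ x_n=0\}$ (and $\phi=\zeta_\infty$ on $(\de B_1)^+$), $+\infty$ otherwise. The $\liminf$ inequality uses that on $\mathcal{C}_k$ the singular term $r_k^{-1}\delta\GGG_{\sfrac32}(h_k)[\cdot]\ge0$ blows up unless the traces concentrate to $0$ on $\Lambda(h_{e_\infty})$, plus weak lower semicontinuity of $\GGG_{\sfrac32}$ (the $\de B_1$-term being a compactly continuous perturbation). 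The recovery sequence is the delicate, ``capacitary'' step: given $\phi$ vanishing on $\Lambda(h_{e_\infty})$, one truncates $\phi$ to $0$ on a shrinking neighbourhood of $\Lambda(h_{e_k})$ — the needed $H^1$-smallness of the truncation is precisely a Hardy inequality for $H^1(B_1)$-functions vanishing on the codimension-one set $\Lambda(h_{e_\infty})$, which is exactly why the least-frequency case is tractable — after which $h_k+r_k\phi_k\ge0$ is automatic (off $\Lambda(h_{e_k})$ one has $h_k>0$ and $r_k\downarrow0$) and the correct trace is restored by adding $\zeta_k-\zeta_\infty\to0$. Granting this: $\zeta_k\in\mathcal{C}_k$ gives $\cF_k(\eta_k)\le\beta_k$; $\Gamma$-convergence of the minima gives $\cF_k(\eta_k)\to\min\cF_\infty$; squeezing against $\cF_k(\eta_k)\ge(1-\kappa_k)\beta_k$ and $\cF_k(\zeta_k)-\GGG_{\sfrac32}(\zeta_k)=r_k^{-1}\delta\GGG_{\sfrac32}(h_k)[\zeta_k]\ge0$ forces $\GGG_{\sfrac32}(\zeta_k)\to\GGG_{\sfrac32}(\zeta_\infty)$, hence $\|\nabla\zeta_k\|_{L^2(B_1)}\to\|\nabla\zeta_\infty\|_{L^2(B_1)}$, hence $\zeta_k\to\zeta_\infty$ strongly in $H^1(B_1)$; in particular $\|\zeta_\infty\|_{H^1}=1$, so $\zeta_\infty\ne0$, and $\zeta_\infty$ minimises $\cF_\infty$.

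\emph{Conclusion.} As a minimiser of $\cF_\infty$, $\zeta_\infty$ is critical for $\GGG_{\sfrac32}$ on the affine space obtained by adding to $\zeta_\infty$ the even $H^1(B_1)$-functions vanishing on $(\de B_1)^+$ and on $\Lambda(h_{e_\infty})$; together with its $\sfrac32$-homogeneity this means $\zeta_\infty$ is even, harmonic in $\Bp$, vanishes on $\Lambda(h_{e_\infty})$, and has $\tfrac{\de\zeta_\infty}{\de x_n}(\hat x,0^+)=0$ on $\Bn\setminus\Lambda(h_{e_\infty})$ — i.e.\ it solves the same mixed Dirichlet–Neumann problem as $h_{e_\infty}$ and the fields $v_{e_\infty,\xi}$ of \eqref{e:vnuxi}. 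The key spectral fact, checked by separation of variables on $\bS^{n-1}$, is that the eigenvalue of this mixed problem corresponding to homogeneity $\sfrac32$ has eigenspace exactly $T_{h_{e_\infty}}\mathscr{H}_{\sfrac32}$ (cf.\ \eqref{e:tg}); thus $\zeta_\infty\in T_{h_{e_\infty}}\mathscr{H}_{\sfrac32}$. On the other hand $h_k=h_{e_k}$ minimises the $H^1$-distance of $c_k$ to the $(n-1)$-manifold $\mathscr{H}_{\sfrac32}\setminus\{0\}$, so $c_k-h_k=r_k\zeta_k$ is $H^1$-orthogonal to $T_{h_{e_k}}\mathscr{H}_{\sfrac32}$; letting $k\to\infty$ (strong convergence $\zeta_k\to\zeta_\infty$, $e_k\to e_\infty$, and smoothness of $\Phi$ and $d\Phi$) gives $\zeta_\infty\perp_{H^1}T_{h_{e_\infty}}\mathscr{H}_{\sfrac32}$. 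Hence $\zeta_\infty\perp_{H^1}\zeta_\infty$, so $\zeta_\infty=0$, contradicting $\|\zeta_\infty\|_{H^1}=1$. The two points that demand real work are the recovery-sequence construction in the $\Gamma$-convergence (the ``capacitary'' estimate, where the codimension-one coincidence set is essential) and the spectral identification of the homogeneity-$\sfrac32$ eigenspace of the mixed problem with $T_{h_{e_\infty}}\mathscr{H}_{\sfrac32}$.
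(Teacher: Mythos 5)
Your overall strategy is the one the paper actually follows: argue by contradiction, blow up $c_k$ around the nearest cone point $h_k=\lambda_kh_{e_k}$, recast the failed inequality as almost-minimality of $\zeta_k$ for auxiliary functionals whose $\Gamma$-limit (once the unilateral constraint degenerates into the Dirichlet condition $\phi=0$ on $\Lambda(h_{e_\infty})$) is a linear mixed Dirichlet--Neumann problem, upgrade to strong $H^1$ convergence via $\Gamma$-convergence of minima, and then clash the membership $\zeta_\infty\in T_{h_{e_\infty}}\mathscr{H}_{\sfrac32}$ against the orthogonality of $\zeta_\infty$ to that tangent space coming from the minimal-distance choice of $h_k$. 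The two points you flag as ``demanding real work'' are exactly where the paper spends its effort: the recovery sequence is built with a cut-off near the codimension-two edge of $\Lambda$ plus a two-dimensional Poincar\'e estimate (your ``Hardy inequality''), and the identification of the $\sfrac32$-homogeneous solutions of the mixed problem with $T_{h_{e_\infty}}\mathscr{H}_{\sfrac32}$ is done not by separation of variables but by a regularity bootstrap in the tangential directions, a conformal flattening of the crack in the $(x_{n-1},x_n)$-plane, and a Taylor expansion. Your direct competitor for the regime $\beta_k\to\infty$ is a legitimate and slightly more economical alternative to the paper's renormalised second $\Gamma$-limit, granted the same capacitary extension.

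The genuine gap is the case analysis. The scale-invariant dichotomy is not $\lambda_\infty>0$ versus $\lambda_\infty=0$ but the behaviour of $\vartheta_k:=\lambda_k/r_k$: both $\lambda_k$ and $r_k$ scale linearly in $c_k$, so your normalisation $\|c_k\|_{H^1}\le1$ cannot exclude the regime $\vartheta_k\to\vartheta<\infty$ (i.e.\ $\lambda_k\lesssim r_k$), which is compatible with the hypotheses of the theorem. In that regime the linear term $r_k^{-1}\delta\GGG_{\sfrac32}(h_k)[\phi]=-4\vartheta_k\int_{\Bn}\phi\,\tfrac{\partial h}{\partial x_n}\,d\cHn$ stays bounded, the constraint does \emph{not} collapse to $\phi=0$ on $\Lambda(h_{e_\infty})$ but survives as the unilateral condition $(\phi+\vartheta h)\vert_{\Bn}\ge0$, and the limit problem is a genuine Signorini problem rather than a linear mixed boundary value problem. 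Your spectral/orthogonality conclusion therefore does not apply there: $\zeta_\infty+\vartheta h$ is a $\sfrac32$-homogeneous global solution of the thin obstacle problem, and the contradiction must instead come from the classification theorem of Athanasopoulos--Caffarelli--Salsa (every such solution lies in $\mathscr{H}_{\sfrac32}$), which gives $c_k/r_k\to\zeta_\infty+\vartheta h\in\mathscr{H}_{\sfrac32}$ and hence $\dist_{H^1}(c_k,\mathscr{H}_{\sfrac32})=o(r_k)$, contradicting the minimality of $r_k$. This is a different ingredient, not an ``analogous'' run of your main argument, so the case cannot be dismissed as you do.
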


An analogous result for the lowest stratum of the singular set holds.
 
\begin{theorem}\label{t:epithin2m}
There exist dimensional constants $\kappa\in(0,1)$ and $\delta>0$ such that
if $c\in H^1(B_1)$ is a $2m$-homogeneous function with $c\geq 0$ on $\Bn$ and
\begin{equation}\label{e:dist2m}
 {\mathrm{dist}_{H^1}}\big(c,\mathscr{H}_{2m}\big)\leq\delta\text{ and }P(c)\in \mathscr{H}_{2m}^{(0)}
\end{equation}
where $P:H^1(B_1)\to\mathscr{H}_{2m}$ is the projection operator, then
\begin{equation}\label{e:epi2m}
 \inf_{v\in \mathscr{A}_c}\GGG_{2m}(v)\leq (1-\kappa)\GGG_{2m}(c).
\end{equation}
\end{theorem}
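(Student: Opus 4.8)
The strategy is the contradiction argument sketched in the introduction, carried out for the functional $\GGG_{2m}$ and the cone $\mathscr{H}_{2m}$. Suppose the statement fails: there exist $\kappa_j \downarrow 0$, $\delta_j \downarrow 0$ and $2m$-homogeneous functions $c_j$ with $c_j \geq 0$ on $\Bn$, $\dist_{H^1}(c_j,\mathscr{H}_{2m}) \leq \delta_j$, $P(c_j) \in \mathscr{H}_{2m}^{(0)}$, but $\inf_{v\in\mathscr{A}_{c_j}}\GGG_{2m}(v) > (1-\kappa_j)\GGG_{2m}(c_j)$. Write $h_j := P(c_j) \in \mathscr{H}_{2m}$ and $\eps_j := \|c_j - h_j\|_{H^1} \to 0$; after normalizing by the (nonzero, up to the trivial case) quantity $\eps_j$, set $z_j := (c_j - h_j)/\eps_j$, so $\|z_j\|_{H^1}=1$. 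Up to subsequences $z_j \weak z$ weakly in $H^1(B_1)$, and $h_j/\|h_j\|_{H^1} \to h_\infty$ in the finite-dimensional space $\mathscr{H}_{2m}$; one normalizes further so that $\|h_j\|_{H^1}=1$, hence $h_\infty \in \mathscr{H}_{2m}^{(0)}$ (this is where the openness hypothesis $P(c_j)\in\mathscr{H}_{2m}^{(0)}$ is used — the limit stays in the same stratum because of the alternative characterization in the Proposition, which makes $\mathscr{H}_{2m}^{(0)}$ relatively open).

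\textbf{The two variational principles.} First, expand $\GGG_{2m}(c_j) = \GGG_{2m}(h_j + \eps_j z_j)$ using bilinearity: since $\delta\GGG_{2m}(h_j)[\cdot]\equiv 0$ by \eqref{e:firstvar2m} and $\GGG_{2m}(h_j)=0$ by \eqref{e:GGG2m}, one gets exactly $\GGG_{2m}(c_j) = \eps_j^2\,\GGG_{2m}(z_j)$ — the quadratic form $\GGG_{2m}$ is its own second variation. Similarly, for any competitor $v \in \mathscr{A}_{c_j}$, writing $v = h_j + \eps_j w$ with $w$ admissible for an auxiliary obstacle problem (same boundary values as $z_j$ on $(\partial B_1)^+$, and the trace constraint coming from $v \geq 0$ on $\Bn$, i.e. $w \geq -h_j/\eps_j$ on $\Bn$), one has $\GGG_{2m}(v) = \eps_j^2 \GGG_{2m}(w)$. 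Hence the failure of the epiperimetric inequality reads
\begin{equation*}
\GGG_{2m}(w) > (1-\kappa_j)\,\GGG_{2m}(z_j) \qquad \text{for every admissible } w,
\end{equation*}
i.e. $z_j$ is a \emph{$(1+o(1))$-quasiminimizer} of $\GGG_{2m}$ in the constraint class $\{w \geq -h_j/\eps_j \text{ on } \Bn,\ w=z_j \text{ on }(\partial B_1)^+\}$. This is the first variational principle. The second: since $h_\infty \in \mathscr{H}_{2m}^{(0)}$ is strictly positive on $\Bn\setminus\{0\}$ and $\eps_j \to 0$, the obstacle $-h_j/\eps_j \to -\infty$ off the origin, so in the limit the unilateral constraint disappears (save on the capacity-zero set $\{0\}$); thus $z$ should be an unconstrained minimizer of $\GGG_{2m}$ with its own boundary data, equivalently $\GGG_{2m}$-harmonic, i.e. $z$ is $2m$-homogeneous and harmonic in $B_1^+\cup B_1^-$ with $\partial z/\partial x_n = 0$ on $\Bn$, so $z \in \widehat{\mathscr{H}}_{2m} = T_{h_\infty}\mathscr{H}_{2m}^{(0)}$ by \eqref{e:tg2m0}.

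\textbf{The $\Gamma$-convergence step and the orthogonality.} To make the heuristic of the previous paragraph rigorous one shows $\Gamma$-convergence (in weak $H^1$) of the constrained functionals $w \mapsto \GGG_{2m}(w) + (\text{indicator of } \{w\geq -h_j/\eps_j \text{ on }\Bn\})$ to the unconstrained $\GGG_{2m}$: the $\limsup$ inequality needs a recovery sequence, obtained by truncating/modifying a target near the origin so as to fit under the exploding obstacle while spending negligible energy (this uses that $\{0\}$ has zero $H^1$-capacity in dimension $n\geq 2$ — the reason the argument is confined to the lowest stratum $d=0$); the $\liminf$ is weak lower semicontinuity of the Dirichlet part together with compactness of traces on $\partial B_1$. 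Then the quasiminimality of $z_j$ passes to the limit: $z$ minimizes $\GGG_{2m}$ among functions with its boundary data, so $z \in \widehat{\mathscr{H}}_{2m}$, and moreover $\GGG_{2m}(z_j) \to \GGG_{2m}(z)$, so $z_j \to z$ strongly in $H^1$, whence $\|z\|_{H^1}=1$ and $z \not\equiv 0$. The final contradiction comes from orthogonality: $z_j = (c_j - h_j)/\eps_j$ where $h_j = P(c_j)$ is the $H^1$-projection of $c_j$ onto $\mathscr{H}_{2m}$, so $c_j - h_j \perp T_{h_j}\mathscr{H}_{2m}$, i.e. $z_j \perp \widehat{\mathscr{H}}_{2m}$ (using that the projection is $C^1$ near $h_\infty \in \mathscr{H}_{2m}^{(0)}$ and that the tangent cone is the fixed subspace $\widehat{\mathscr{H}}_{2m}$, linear here, simplifying matters relative to the $\sfrac32$ case). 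Passing to the strong limit, $z \perp \widehat{\mathscr{H}}_{2m}$; but $z \in \widehat{\mathscr{H}}_{2m}$, so $z = 0$, contradicting $\|z\|_{H^1}=1$.

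\textbf{Main obstacle.} The crux is the $\Gamma$-$\limsup$ inequality, i.e. constructing recovery sequences that satisfy the diverging obstacle constraint $w \geq -h_j/\eps_j$ on $\Bn$ at asymptotically no energy cost — this is precisely the "capacitary" point alluded to in the introduction and is why $d=0$ (obstacle constraint lost on a set of dimension $\leq n-2$, here a point) is exactly the borderline case that works. Everything else — the algebraic expansions via \eqref{e:firstvar2m}–\eqref{e:GGG2m}, the compactness, the identification of the tangent cone via \eqref{e:tg2m0}, and the orthogonality from the projection — is, by comparison with the $\sfrac32$-homogeneous case, routine; indeed several simplifications occur because $\widehat{\mathscr{H}}_{2m}$ is a genuine linear subspace and $\GGG_{2m}$ vanishes identically on $\mathscr{H}_{2m}$ together with its first variation in \emph{all} directions, not merely tangential ones.
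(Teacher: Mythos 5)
Your overall architecture (contradiction, normalization by the distance, quasi\hbox{-}minimality of $z_j$ for an auxiliary constrained functional, $\Gamma$-convergence to an unconstrained problem, strong convergence, and the orthogonality $z\perp\widehat{\mathscr{H}}_{2m}$ versus $z\in\widehat{\mathscr{H}}_{2m}$) is the paper's. But there are two genuine gaps. First, you silently assume that the rescaled obstacle $-h_j/\eps_j=-\vartheta_j\hat h_j$ (with $\vartheta_j:=\|P(c_j)\|_{H^1}/\delta_j$) blows up. Nothing in the hypotheses forces $\vartheta_j\to\infty$: the distance bound controls $\|c_j-P(c_j)\|$ but gives no lower bound on $\|P(c_j)\|$, and the scale invariance of the problem means you cannot normalize $\|P(c_j)\|=1$ while keeping $\delta_j\to 0$. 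When $\vartheta_j\to\vartheta<\infty$ the constraint survives in the limit as $(z+\vartheta\psi_\infty)\vert_{\Bn}\geq 0$, your "unconstrained harmonic limit" conclusion fails, and the contradiction must instead come from showing that $w_\infty=z_\infty+\vartheta\psi_\infty$ is a $2m$-homogeneous global solution of the Signorini problem, hence lies in $\mathscr{H}_{2m}$ by the classification results, which contradicts the minimality of $\dist_{H^1}(c_j,\mathscr{H}_{2m})=\delta_j$. This whole branch is missing from your proof.

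Second, your claim that $h_\infty:=\lim h_j/\|h_j\|_{H^1}$ belongs to $\mathscr{H}_{2m}^{(0)}$ "because $\mathscr{H}_{2m}^{(0)}$ is relatively open" is a non sequitur: openness of a stratum means precisely that limits of sequences in it may escape to other strata (e.g.\ $x_1^2/j+x_2^2-(1+1/j)x_3^2\in\mathscr{H}_2^{(0)}$ for $n=3$, but the normalized limit $x_2^2-x_3^2$ vanishes on the line $\{x_2=x_3=0\}$). Consequently the set on $\Bn$ where the obstacle fails to diverge is $\{h_\infty=0\}\cap\Bn$, which need not reduce to the origin, and your recovery-sequence/capacity argument as stated breaks down. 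The correct replacement is to show that $\{h_\infty=0\}\cap\Bn$ is contained in an $(n-2)$-dimensional linear subspace: one uses that $h_\infty\geq 0$ on $\Bn$ with $\partial h_\infty/\partial x_n=0$ there, so its zero set lies in $\{h_\infty=|\nabla h_\infty|=0\}$, combined with the convexity of $\psi_j\vert_{\Bn}$ (Athanasopoulos--Caffarelli) to see this set is a convex cone, and with Caffarelli--Friedman/Hardt--Simon to bound its dimension by $n-2$; only then can one build the cut-off along that subspace and run the zero-capacity argument. (Note also that the hypothesis $P(c_j)\in\mathscr{H}_{2m}^{(0)}$ is really used elsewhere, namely to identify the supporting tangent cone at $\psi_j$ with the full linear space $\widehat{\mathscr{H}}_{2m}$, which is what upgrades the variational inequality from the projection to the exact orthogonality $\langle z_j,\zeta\rangle=0$ for all $\zeta\in\widehat{\mathscr{H}}_{2m}$.)
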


\subsection{The lowest frequency}

Here we prove the epiperimetric inequality for those points of the free boundary 
with frequency $\sfrac 32$. To simplify the notation in the proof below we shall denote
$\GGG_{\sfrac 32}$ only by $\GGG$.
\begin{proof}[Proof of Theorem~\ref{t:epithin32}]
We argue by contradiction. Therefore we start off assuming the existence of numbers 
$\kappa_j,\,\delta_j\downarrow 0$ and of functions $c_j\in H^1(B_1)$ that are 
$\sfrac32$-homogeneous, $c_j\geq 0$ on $\Bn$ and such that
\begin{equation}\label{e:distc}
{\mathrm{dist}_{H^1}}\big(c_j,\mathscr{H}_{\sfrac32}\big)=\delta_j,
\end{equation}
and
\begin{equation}\label{e:epic}
 (1-\kappa_j)\GGG(c_j) \leq \inf_{v\in \mathscr{A}_{c_j}}\GGG(v).
\end{equation}
In particular, setting $h:=h_{e_{n-1}}$, up to a change of coordinates
depending on $j$, we may assume that there exists $\lambda_j\geq0$ such that
\[
\psi_j:=\lambda_j\,h
\]
is a point of minimum distance of $c_j$ from $\mathscr{H}_{\sfrac32}$, i.e.
\begin{equation}\label{e:distc1}
 \|c_j-\psi_j\|_{H^1}={\mathrm{dist}_{H^1}}\big(c_j,\mathscr{H}_{\sfrac32}\big)=\delta_j\qquad\text{for all $j \in \N$}.
\end{equation}

\medskip

We divide the rest of the proof in some intermediate steps.

\medskip

\noindent{\bf Step 1: Introduction of a family of auxiliary functionals.}
We rewrite inequality \eqref{e:epic} conveniently and 
interpret it as an almost minimality condition
for a sequence of new functionals.

\smallskip
 
For fixed $j$, let $v\in \mathscr{A}_{c_j}$ and use \eqref{e:firstvar}
(applied twice to $\psi_j$ with test functions $\varphi=c_j-\psi_j$
and $\varphi=v-\psi_j$) and \eqref{e:GGGh},
in order to rewrite \eqref{e:epic} in the following form
\begin{multline*}
(1-\kappa_j)\Big(\GGG(c_j)-\GGG(\psi_j)-\delta\GGG(\psi_j)[c_j-\psi_j]
 -4\int_{\Bn}(c_j-\psi_j)\,\frac{\partial \psi_j}{\partial x_n}\,d\cHn\Big)\\
\leq \GGG(v)-\GGG(\psi_j)-\delta\GGG(\psi_j)[v-\psi_j]
-4\int_{\Bn}(v-\psi_j)\,\frac{\partial \psi_j}{\partial x_n}\,d\cHn.
\end{multline*}
Simple algebraic manipulations then lead to
\begin{multline}\label{e:quasi minchia}
(1-\kappa_j)
\Big(\GGG(c_j-\psi_j)-4\int_{\Bn}(c_j-\psi_j)\frac{\partial \psi_j}{\partial x_n}\,d\cHn\Big)\\
\leq \GGG(v-\psi_j)-4\int_{\Bn}(v-\psi_j)\frac{\partial \psi_j}{\partial x_n}\,d\cHn,
\end{multline}
for all $v \in \mathscr{A}_{c_j}$.

\medskip

Next we introduce the following notation. We set 
\begin{equation}\label{e:z_j}
z_j:=\frac{c_j-\psi_j}{\delta_j},
\end{equation}
and, recalling that $\psi_j=\lambda_j h$, we set
\[
\vartheta_j:=\frac{\lambda_j}{\delta_j} 
\]
and 
\begin{equation}\label{e:Bj}
{\mathscr{B}}_{j}:=\left\{z\in z_j+H^1_0(B_1):\,(z+\vartheta_jh)\vert_{\Bn}\geq 0\right\}.
\end{equation}
Then we define the functionals $\GGG_j : L^2(B_1) \to (-\infty, +\infty]$
given by
\begin{equation}\label{e:Gj}
\GGG_j(z):=
\begin{cases}
\displaystyle{\int_{B_1}|\nabla z|^2dx-\frac{3}{2}\int_{\partial B_1}z_j^2\,d\cHn
-4\,\vartheta_j\,\int_{\Bn}z\,\frac{\partial h}{\partial x_n}\,d\cHn} & \\
& \hskip-2cm\text{if $z\in {\mathscr{B}_{j}}$,} \\
+\infty  & \hskip-2cm \textup{otherwise.}
\end{cases}
\end{equation}
Note that the second term in the formula does not depend on $z$ but only on the boundary 
conditions $z_j\vert_{\de B_1}$.

\medskip

Therefore, \eqref{e:quasi minchia} reduces to
\begin{equation}\label{e:epic2}
(1-\kappa_j)\GGG_j(z_j)\leq \GGG_j(z)\quad \text{ for all }z\in {\mathscr{B}}_{j}.
\end{equation}
Moreover, note that by \eqref{e:distc1} and \eqref{e:z_j}
\begin{equation}\label{e:distc3}
 \|z_j\|_{H^1(B_1)}=1.
\end{equation}
This implies that we can extract a subsequence (not relabeled) such that
\begin{itemize}
\item[(a)] $(z_j)_{j \in \N}$ converges weakly in $H^1(B_1)$ to some $z_\infty$;
\item[(b)] the corresponding traces $(z_j\vert_{\Bn})_{j \in \N}$
 converge strongly in $L^2(\Bn)\cup L^2(\partial B_1)$;
\item[(c)] $(\vartheta_j)_{j\in\N}$ has a limit $\vartheta\in[0,\infty]$.
\end{itemize}

\medskip

\noindent{\bf Step 2: First properties of $(\GGG_j)_{j\in\N}$.}
We establish the equi-coercivity and some further properties
of the family of the auxiliary functionals $(\GGG_j)_{j\in\N}$.

\smallskip

Notice that for all $w\in \mathscr{B}_j$, being $w\vert_{\de B_1}=z_j\vert_{\de B_1}$, 
it holds that
\begin{equation}\label{e:coerc}
-\int_{\Bn}w\,\frac{\partial h}{\partial x_n}d\cHn=
\int_{\Bn}
-(w+\vartheta_j\,h)\frac{\partial h}{\partial x_n} d\cHn
+\vartheta_j\,\int_{\Bn}
h\frac{\partial h}{\partial x_n} d\cHn
 \geq0,
\end{equation}
where we used \eqref{e:dernorm2}, \eqref{e:hhn} 
and $(w+\vartheta_j\,h)\vert_{\Bn}\geq0$.
Therefore, we deduce from the very definition \eqref{e:Gj} that
for all $w \in \mathscr{B}_j$
\begin{equation}\label{e:equi-coerc}
\int_{B_1}|\nabla w|^2dx-\frac{3}{2}\int_{\partial B_1}z^2_j\leq\GGG_j(w),
\end{equation}
thus establishing the equi-coercivity of the sequence $(\GGG_j)_{j\in\N}$.

By taking into account \eqref{e:distc3}, if $\vartheta\in[0,+\infty)$ then
\begin{equation}\label{e:limGj}
\liminf_j\GGG_j(z_j) \geq 1-\frac32\int_{\de B_1}z_\infty^2\,d\cHn
 -4\vartheta\,\int_{\Bn}z_\infty\,\frac{\partial h}{\partial x_n}\,d\cHn.
\end{equation}
Instead, if $\vartheta=+\infty$ then \eqref{e:distc3} and \eqref{e:equi-coerc} yield
\[
 \liminf_j\GGG_j(z_j)\geq 1-\frac32\int_{\de B_1}z_\infty^2\,d\cHn.
\]
Hence in all instances, it is not restrictive (up to passing to a further subsequence
which we do not relabel) to assume that $(\GGG_j(z_j))_{j\in\N}$ has a limit in 
$(-\infty,+\infty]$. 
Finally, note that 
\begin{equation}\label{e:energia infinita}
\lim_j\GGG_j(z_j)=+\infty \quad \Longleftrightarrow \quad
\lim_j\vartheta_j\int_{\Bn}z_j\,\frac{\partial h}{\partial x_n}\,d\cHn=-\infty.
\end{equation}

\medskip

\noindent{\bf Step 3: Asymptotic analysis of $(\GGG_j)_{j\in\N}$.}
Here we prove a $\Gamma$-convergence result for the family of energies $\GGG_j$. 
\smallskip

More precisely, we distinguish three cases.

\begin{itemize}
\item[(1)] If $\vartheta \in [0, +\infty)$, then
\[
(z_\infty+\vartheta\,h)\vert_{\Bn}\geq0,
\]
and $\Gamma(L^2(B_1))\hbox{-}\lim_j \GGG_j = \GGG_\infty^{(1)}$, 
where
\[
\GGG_\infty^{(1)}(z):=
\begin{cases}
\displaystyle{
\int_{B_1}|\nabla z|^2dx-\frac{3}{2}\int_{\partial B_1}z_\infty^2\,d\cHn
-4\vartheta\,\int_{\Bn}z\,\frac{\partial h}{\partial x_n}\,d\cHn
} & \\
& \hskip-2cm\text{if $z \in {\mathscr{B}^{(1)}_{\infty}}$,}\\
+\infty  & \hskip-2cm \textup{otherwise,}
\end{cases}
\]
and
\[
{\mathscr{B}_{\infty}^{(1)}} := \Big\{z\in z_\infty+H^1_0(B_1): (z+\vartheta\,h)\vert_{\Bn}\geq0\Big\}.
\]

\item[(2)] If $\vartheta = +\infty$ and $\lim_j\GGG_j(z_j)<+\infty$, then
\[
z_\infty\vert_{\Bnm}=0
\]
recalling that $\Bnm=\Bn\cap \{x_{n-1}\leq0\}$, and $\Gamma(L^2(B_1))\hbox{-}\lim_j \GGG_j=\GGG_\infty^{(2)}$,
where
\[
\GGG_\infty^{(2)}(z):=
\begin{cases}
\displaystyle{
\int_{B_1}|\nabla z|^2dx-\frac{3}{2}\int_{\partial B_1}z_\infty^2\,d\cHn} & 
\text{if $z \in {\mathscr{B}^{(2)}_{\infty}}$,}\\
+\infty  & \textup{otherwise,}
\end{cases}
\]
and 
\[
{\mathscr{B}^{(2)}_{\infty}} := \Big\{z\in z_\infty+H^1_0(B_1): z\vert_{\Bnm}=0\Big\}.
\]

\item[(3)] if $\vartheta = +\infty$ and $\lim_j\GGG_j(z_j)=+\infty$, then 
$\Gamma(L^2(B_1))\hbox{-}\lim_j \GGG_j=\GGG_\infty^{(3)}$, where $\GGG_\infty^{(3)}\equiv+\infty$
on the whole $L^2(B_1)$. 

\end{itemize}

Equality $\GGG_\infty^{(i)} = \Gamma(L^2(B_1))\hbox{-}\lim_j \GGG_j$
for $i=1,2,3$ consists, by definition, 
in showing the following two assertions:
\begin{enumerate}
 \item[(a)] for all $(w_j)_{j\in\N}$ and $w\in L^2(B_1)$ such that $w_j\to w$ in $L^2(B_1)$ we have
 \begin{equation}\label{e:liminf}
 \liminf_j\GGG_j(w_j)\geq\GGG_\infty^{(i)}(w);
 \end{equation}
 \item[(b)] for all $w\in L^2(B_1)$ there is $(w_j)_{j\in\N}$ such that $w_j\to w$ in $L^2(B_1)$ and
 \begin{equation}\label{e:limsup}
 \limsup_j\GGG_j(w_j)\leq\GGG_\infty^{(i)}(w).
\end{equation}
\end{enumerate}

\medskip

\noindent{\emph{Proof of the $\Gamma$-convergence in case (1).}}
For what concerns the $\liminf$ inequality \eqref{e:liminf}, we can assume
without loss of generality (up to passing to a subsequence we do not relabel) that 
\[
 \liminf_j\GGG_j(w_j)=\lim_j\GGG_j(w_j)<\infty.
\]
In view of \eqref{e:equi-coerc}, there exists a subsequence (not relabel)
such that $(w_j)_{j\in\N}$ converges to $w$ weakly in $H^1(B_1)$ and thus the corresponding 
traces strongly in $L^2(\de B_1)\cup L^2(\Bn)$.
This implies that $w+\vartheta h\geq 0$ on $\Bn$ and, in particular,
taking $w_j=z_j$ and $w=z_\infty$, we deduce that $z_\infty \in
\mathscr{B}_\infty^{(1)}$.
\eqref{e:liminf} is then a simple consequence of 
the lower semicontinuity of the Dirichlet energy
under the weak convergence in $H^1$.

\smallskip

For what concerns the $\limsup$ inequality \eqref{e:limsup}, 
we start noticing that it is enough to consider the case $w\in\mathscr{B}_\infty^{(1)}$ with
\begin{equation}\label{e:supporto}
\supp(w-z_\infty)\subseteq B_\rho \quad \text{for some $\rho\in(0,1)$.}
\end{equation}
Indeed, in order to deal with the general case,
consider the functions
\[
w_t (x):=w\big(\sfrac{x}{t}\big)\,\chi_{B_1}\big(\sfrac{x}{t}\big)+z_\infty\big(\sfrac{x}{t}\big)\,\chi_{B_t\setminus\overline{B}_1}\big(\sfrac{x}{t}\big)\quad\text{with $t>1$.}
\]
Clearly, $w_t\in H^1(B_1)$, $\supp(w_t-z_\infty)\subseteq B_{\sfrac 1t}$, and $w_t\to w$ 
in $H^1(B_1)$ for $t\downarrow 1$. Since the upper bound inequality
\eqref{e:limsup} holds 
for each $w_t$, a diagonalization argument provides the conclusion.

Moreover, by a simple contradiction argument
it also suffices to show the following: 
given $w$ as in \eqref{e:supporto}, for every sequence $j_k\uparrow+\infty$
there exist subsequences $j_{k_l}\uparrow+\infty$ and $w_l\to w$ in $L^2(B_1)$
such that
\begin{equation}\label{e:limsup-sub}
\limsup_l\GGG_{j_{k_l}}(w_l)\leq\GGG_\infty^{(i)}(w).
\end{equation}

After these reductions, we first use \eqref{e:distc3} to find a subsequences
(not relabeled) such that 
$(|\nabla z_{j_k}|^2\cL^n\res B_1)_{h\in\N}$ converges weakly$^*$ in the sense of measures to some 
finite Radon measure $\mu$.
Fixed $r\in(\rho,1)$, let $R:=\frac{1+r}{2}$ and let $\varphi\in C^1_c(B_1)$ be a cut-off function 
such that
\[
\varphi\vert_{B_r}\equiv 1, \quad \varphi\vert_{B_1\setminus \overline{B}_R}\equiv 0 \quad\text{and}\quad
\|\nabla\varphi\|_{L^\infty}\leq \frac4{1-r}.
\]
Defining
\[
w_k^r:=\varphi\big(w+(\vartheta-\vartheta_{j_k})h\big)+(1-\varphi)z_{j_k},
\]
we easily infer that $w_k^r\in\mathscr{B}_{j_k}$ since $w\in\mathscr{B}_\infty^{(1)}$, 
$z_{j_k}\in\mathscr{B}_{j_k}$ and 
\[
 w_k^r+\vartheta_{j_k}h=\varphi(w+\vartheta h)+(1-\varphi)(z_{j_k}+\vartheta_{j_k} h).
\]
Moreover, since $\vartheta_{j_k}\to\vartheta\in[0,+\infty)$ we get that 
$w_k^r\to\varphi\, w+(1-\varphi)z_\infty$ in $L^2(B_1)$. Simple calculations then lead to 
\begin{multline}\label{e:stimawj}
 \int_{B_1}|\nabla w_k^r|^2dx\leq\int_{B_r}|\nabla w+(\vartheta-\vartheta_{j_k})\nabla h|^2dx\\
 +\underbrace{\int_{B_R\setminus \overline{B}_r}|\nabla w_{j_k}^r|^2dx}_{=:I_k}
 +\int_{B_1\setminus \overline{B}_R}|\nabla z_{j_k}|^2dx
 \end{multline}
By taking into account that $r>\rho$, we estimate the term $I_k$ above as follows
\begin{align}\label{e:Ij}
I_k\leq & \,2\int_{B_R\setminus \overline{B}_r} |\nabla w+(\vartheta-\vartheta_{j_k})\nabla h|^2dx\notag\\
 &+2\int_{B_R\setminus \overline{B}_r}|\nabla z_{j_k}|^2dx
 +2\int_{B_R\setminus \overline{B}_r}|\nabla\varphi|^2|z_\infty-z_{j_k}+(\vartheta-\vartheta_{j_k})h|^2dx.
\end{align}
Hence, provided $\mu(\de B_r)=0$, from \eqref{e:stimawj} and \eqref{e:Ij} we deduce that
\begin{align}\label{e:limsupr}
\limsup_k\int_{B_1}|\nabla w_k^r|^2dx\leq \,\int_{B_R}|\nabla w|^2dx
+\int_{B_R\setminus \overline{B}_r} |\nabla w|^2dx
+ & 3\,\mu(B_1\setminus \overline{B}_r).
\end{align}
In particular, we may apply the construction above to a sequence $r_l\uparrow 1$
and $R_l:=\frac{1+r_l}{2}$, such that $\mu(\de B_{r_l})=0$ for all $l\in\N$.
A diagonalization argument provides a subsequence $j_{k_l}\uparrow\infty$ 
such that $w_l:=w_{k_l}^{r_l}\to w$ in $L^2(B_1)$ and 
\[
\limsup_l\int_{B_1}|\nabla w_l|^2dx\leq \int_{B_1}|\nabla w|^2dx,
\]
and \eqref{e:limsup-sub} follows at once by considering
the strong convergence of traces of $z_{j_{k_l}}$ in $L^2(\Bn)$.

\medskip

\noindent{\emph{Proof of the $\Gamma$-convergence in case (2).}}
For what concerns the $\liminf$ inequality \eqref{e:liminf},
we assume without loss of generality that
\[
 \liminf_j\GGG_j(w_j)=\lim_j\GGG_j(w_j)<\infty.
\]
Since $w_j\in\mathscr{B}_j$, the stated convergences yield that $w\geq 0$ on $\Bnm$. 
Moreover, \eqref{e:coerc} gives
\begin{align*}
0\leq & -\vartheta_j\int_{\Bn}w_j\frac{\de h}{\de x_n}d\cH^{n-1}\leq
\GGG_j(w_j) + \frac{3}{2}\int_{\partial B_1}z^2_j\,d\cH^{n-1}\\
\leq & \sup_j \left(\GGG_j(w_j) +\frac{3}{2}\int_{\partial B_1}z^2_j\,d\cH^{n-1} \right) < +\infty.
\end{align*}
Therefore the convergence of traces implies 
\[
\int_{\Bn}w\frac{\de h}{\de x_n}d\cH^{n-1}=\lim_j\int_{\Bn}w_j\frac{\de h}{\de x_n}d\cH^{n-1}=0.
\]
By \eqref{e:dernorm2} we deduce that actually $w=0$ on $\Bnm$, i.e.~$w\in\mathscr{B}^{(2)}_\infty$. 
In particular, this holds true for $z_\infty$ by taking into account that $\sup_j\GGG_j(z_j)<+\infty$.
The inequality \eqref{e:liminf} then follows at once.

\smallskip

Let us now deal with the $\limsup$ inequality \eqref{e:limsup}.
Arguing as in case (1), we need only to consider the case of 
$w \in\mathscr{B}_\infty^{(2)}$ such that \eqref{e:supporto} holds
and, for every $j_k\uparrow+\infty$, we need to find
a subsequence $j_{k_l}\uparrow+\infty$ and a sequence $w_l\to w$ in $L^2(B_1)$
such that
\begin{equation}\label{e:limsup-sub2}
\limsup_l\GGG_{j_{k_l}}(w_l)\leq\GGG_\infty^{(i)}(w).
\end{equation}

\smallskip

Introduce the positive Radon measures
\[
\nu_k:=|\nabla z_{j_k}|^2\cL^n\res B_1
-4\vartheta_{j_k}(z_{j_k}+\vartheta_{j_k}h)\frac{\de h}{\de x_n}\cH^{n-1}\res \Bnm.
\]
Note that, for $k$ sufficiently large it follows that
\[
\nu_{k}(B_1) = \GGG_{j_k}(z_{j_k})+\frac32\int_{\de B_1}z_{j_k}^2\,d\cH^{n-1}
\leq  \sup_j\GGG_j(z_j)+2\int_{\de B_1}z_{\infty}^2\,d\cH^{n-1}<\infty.
\]
Thus, $(\nu_k)_{k\in\N}$ is equi-bounded in mass and, up to a subsequence that
we do not relabel, we may assume that $(\nu_k)_{k\in\N}$ converges weakly$^\ast$ 
to a finite positive Radon measure $\nu$.

Next we fix two constants $\eps,\delta>0$ sufficiently small and
we introduce the sets 
$K:=\de B_1\cup B_1^{\prime,-}$ and
$G_{\eps}:=\left\{x\in B_1:\,\mathrm{dist}(x,K) > \eps\right\}$ for every $\eps>0$. 
In order to find the sequence in \eqref{e:limsup-sub2}, we modify $w$ in two different steps. 
First we find $\wde \in \mathscr{B}_\infty^{(2)}$ such that
\begin{gather}
\wde\vert_{G_\eps} \in C^\infty(G_\eps) \label{e:step1-regolarizzazione1}\\
\|w-\wde\|_{H^1}^2:=\int_{B_1}\left(|w-\wde|^2 + |\nabla w - \nabla \wde|^2 \right)dx \leq \delta.
\label{e:step1-regolarizzazione2}
\end{gather}
This modification can be achieved in view of Meyers \& Serrin's approximation result that 
provides a function 
$v \in C^\infty(B_1)$ with $\|v-w\|_{H^1} \leq \delta'$, for some small 
$\delta'$ to be specified in what follows, and having the same trace as $w$ on $\de B_1$.
Then set
\[
\wde := \phi_\eps\,v+(1-\phi_\eps)\, w,
\]
where $\phi_\eps:B_1\to[0,1]$ is a smooth cut-off function such that $\phi_\eps\equiv 1$
on $G_\eps$ and $\phi_\eps\equiv 0$ on $B_1\setminus G_{\sfrac{\eps}{2}}$, and $\|\nabla \phi_\eps\|_{L^\infty}\leq \sfrac4\eps$.
Since $\wde-w = \phi_\eps\,(v-w)$, it follows that
\[
\|\wde-w\|^2_{H^1} \leq \|v-w\|^2_{H^1}\big(\|\phi_\eps\|_{L^\infty}^2 + \|\nabla \phi_\eps\|^2_{L^\infty}\big)
\leq (\delta')^2 \left(1+ \frac{16}{\eps^2}\right) \leq \delta,
\]
for $\delta'$ suitably chosen and depending only on $\delta$ and $\eps$.

Next we consider the Lipschitz functions $\psi_\eps, \chi_\eps:B_1 \to \R$ defined by
\[
\psi_{\eps}(x):=
 \begin{cases}
 1 & B_{1-2\eps} \cr
 1-\frac{1-|x|}{\eps} & B_{1-\eps}\setminus B_{1-2\eps} \cr
 0 & B_1\setminus B_{1-\eps},
 \end{cases}
\]
and 
\[
\chi_{\eps}(x):=\Big(2-\frac 1\eps\mathrm{dist}(x,\{x_n=x_{n-1}=0\})\Big)\wedge 1\vee 0.
\]
Note that actually $\chi_{\eps}(x)=\chi_{\eps}(x_{n-1},x_n)$ with 
\[
\chi_{\eps}(x)=
 \begin{cases}
 1 & {x_n^2+x_{n-1}^2}\leq \eps^2 \cr
 0 & {x_n^2+x_{n-1}^2}\geq 4\eps^2.
 \end{cases}
\]
Set $\varphi_{\eps}:=\psi_\eps\wedge(1-\chi_\eps)$, then $\varphi_{\eps}\in\mathrm{Lip}(\R^n,[0,1])$ with
$\|\nabla \varphi_\eps\|_{L^\infty}\leq\frac1\eps$, and
\begin{equation}\label{e:phizero}
\{\varphi_{\eps}=0\}=\{\psi_{\eps}=0\}\cup\{\chi_\eps= 1\}=(B_1\setminus B_{1-\eps})
 \cup\Big\{\sqrt{x_n^2+x_{n-1}^2}\leq \eps\Big\}
\end{equation}
\begin{equation}\label{e:phiuno}
\{\varphi_{\eps}=1\}=\{\psi_{\eps}=1\}\cap\{\chi_\eps=0\}=B_{1-2\eps}\cap\Big\{\sqrt{x_n^2+x_{n-1}^2}\geq 2\eps\Big\}
\end{equation}
We finally define
\[
  w_k^{\eps,\delta}:=\varphi_{\eps}\,\wde+\big(1-\varphi_{\eps}\big)z_{j_k},
\]
where $j_k$ is the sequence considered for \eqref{e:limsup-sub2}.
First, we show that $w_k^{\eps,\delta} \in \mathscr{B}_{j_k}$ for $k$ sufficiently large.
By construction $w_k^{\eps,\delta}\vert_{\de B_1}=z_{j_k}\vert_{\de B_1}$.
Moreover, we have that
\begin{align*}
w_k^{\eps,\delta}+\vartheta_{j_k}\,h=
\varphi_{\eps}\,(\wde+\vartheta_{j_k}\,h)
+\big(1-\varphi_{\eps}\big)(z_{j_k}+\vartheta_{j_k}h),
\end{align*}
and both the two terms above are positive on $\Bn$. Indeed, for what concerns the latter one, 
it is enough to recall that $z_{j_k} \in \mathscr{B}_{j_k}$ and that $1-\varphi_\eps\geq 0$.
Instead, for the former addend we notice that $(\wde+\vartheta_{j_k}\,h)\vert_{\Bnm} = 0 $ and, 
due to the fact that
\begin{itemize}
\item[(i)]   $\wde$ is smooth in $G_\eps$ by \eqref{e:step1-regolarizzazione1},
\item[(ii)]  $h>c_\eps>0$ on $\Bn \cap G_\eps$ for some positive constant $c_\eps>0$,
\item[(iii)] $\supp\varphi_{\eps} \cap \Bn = \bar G_\eps \cap \Bn$ (cp. \eqref{e:phizero}),
\item[(iv)]  $\vartheta_{j_k} \uparrow +\infty$ 
\end{itemize}
for sufficiently large $k$ it follows that
\[
\varphi_\eps\,(\wde+\vartheta_{j_k}\,h)\geq \varphi_\eps\,(-\|\wde\|_{L^\infty(G_\eps \cap \Bn)}
+\vartheta_{j_k}\,c_\eps) \geq 0\quad\text{on $\Bn\setminus\Bnm$}.
\]

We now compute the distance between $w_k^{\eps,\delta}$ and $w$.
By \eqref{e:step1-regolarizzazione2} we have that
\begin{align}\label{e:L2 norm}
\|w_k^{\eps,\delta} - w\|_{L^2} &  \leq \|\wde-w\|_{L^2(B_1)} + \|w-z_{j_k}\|_{L^2(\{\varphi_\eps<1\})}\notag\\
& \leq \delta +\|z_\infty-z_{j_k}\|_{L^2(B_1)}+\|z_\infty\|_{L^2(\{\varphi_\eps<1\})}
+\|w\|_{L^2(\{\varphi_\eps<1\})}.
\end{align}
Furthermore, straightforward computations just like in \eqref{e:stimawj} and \eqref{e:Ij} give
\begin{align}\label{e:energia1}
\int_{B_1} & |\nabla w_k^{\eps,\delta}|^2dx \notag\\  = & \int_{\{\varphi_\eps=1\}}|\nabla \wde|^2dx
+{\int_{\{0<\varphi_\eps<1\}}|\nabla w_k^{\eps,\delta}|^2dx}
+\int_{\{\varphi_\eps=0\}}|\nabla z_{j_k}|^2dx\notag\\
\leq & \int_{\{\varphi_\eps=1\}}|\nabla \wde|^2dx
+2\int_{\{0<\varphi_\eps<1\}}|\nabla \wde|^2dx+2\int_{\{0<\varphi_\eps<1\}}|\nabla z_{j_k}|^2dx\notag\\
+ & \frac{2}{\eps^2}\int_{\{0<\varphi_\eps<1\}}|\wde-z_{j_k}|^2 +\int_{\{\varphi_\eps=0\}}|\nabla z_{j_k}|^2dx.
\end{align}
Taking into account \eqref{e:step1-regolarizzazione2}, we obtain from \eqref{e:energia1} 
\begin{align}\label{e:energia2}
\int_{B_1}|\nabla w_k^{\eps,\delta}|^2dx \leq
& \int_{B_{1}}|\nabla \wde|^2dx+\int_{\{0<\varphi_\eps<1\}}|\nabla \wde|^2dx 
\notag\\
&+3\int_{\{\varphi_\eps<1\}}|\nabla z_{j_k}|^2dx + 
\frac{4}{\eps^2}\int_{\{0<\varphi_\eps<1\}}|w-z_{j_k}|^2+\frac{4\,\delta}{\eps^2}.
\end{align}
By choosing $\delta_\eps = \eps^4$, and setting $w_k^\eps:=w_k^{\eps,\delta_\eps}$, we 
conclude from \eqref{e:energia2} 
that
\begin{align}
\int_{B_1}|\nabla w_k^\eps|^2dx \leq
& \int_{B_{1}}|\nabla \wdee|^2dx
+\int_{\{0<\varphi_\eps<1\}}|\nabla \wdee|^2dx 
\notag\\
&+3\int_{\{\varphi_\eps<1\}}|\nabla z_{j_k}|^2dx 
+ \frac{4}{\eps^2}\int_{\{0<\varphi_\eps<1\}}|w-z_{j_k}|^2+4\,\eps^2.\label{e:energia3}
\end{align}

Next, in view of \eqref{e:dernorm2}, \eqref{e:hhn} and since $(\wdee+\vartheta_{j_k}\,h)\vert_{\Bnm} = 0$,
the very definition of $w_k^\eps$ gives
\begin{align}
0\leq -4\vartheta_{j_k}\int_{\Bn}w_k^\eps\frac{\de h}{\de x_n}d\cH^{n-1} & =
-4\vartheta_{j_k}\int_{B_1^{\prime,-}}\big(1-\varphi_{\eps}\big)
(z_{j_k}+\vartheta_{j_k}h)\frac{\de h}{\de x_n}\cH^{n-1}\notag\\
& \leq 
{-4\vartheta_{j_k}\int_{B_1^{\prime,-}\cap \{\varphi_\eps<1\}}
(z_{j_k}+\vartheta_{j_k}h)\frac{\de h}{\de x_n}d\cH^{n-1}},\label{e:IIk}
\end{align}
having also used in the last inequality that $z_{j_k}\in\mathscr{B}_{j_k}$.

Therefore, by the very definition of $\nu$ and by collecting \eqref{e:energia3} 
and \eqref{e:IIk} we conclude, provided $\nu(\de\{\varphi_\eps<1\})=0$, that 
\begin{align}\label{e:energia4}
\limsup_k\, & \GGG_{j_k}(w_k^{\eps})  \leq  \int_{B_{1}}|\nabla \wdee|^2dx 
+ \int_{\{0<\varphi_\eps<1\}}|\nabla\wdee|^2dx+3\,\nu(\{\varphi_\eps<1\}) \notag\\
+ & \underbrace{\frac{4}{\eps^2}\int_{B_{1-\eps}\cap\{\eps^2<x_{n-1}^2+x_n^2<4\eps^2\}}
|w-z_{\infty}|^2dx}_{I_\eps:=}+4\,\eps^2-\frac32\int_{\de B_1}z_\infty^2\,d\cH^{n-1}, 
\end{align}
where we have used \eqref{e:supporto}, as $\rho>1-\eps$ for $\eps$ small enough, and the equality
\begin{multline*}
\{0<\varphi_\eps<1\}=\\
\{x\in B_{1-\eps}\setminus B_{1-2\eps}:\,x_{n-1}^2+x_n^2>\eps^2\}\cup
\{x\in B_{1-\eps}:\,\eps^2<x_{n-1}^2+x_n^2<4\eps^2\},
\end{multline*}
that follows from \eqref{e:phizero} and \eqref{e:phiuno}. We next claim that 
\begin{equation}\label{e:Ieps}
\lim_{\eps\downarrow 0}I_\eps=0.
\end{equation}
To this aim we use Fubini's theorem, a scaling argument and a $2$-dimensional Poincar\`e inequality 
(recalling that the trace of $w-z_\infty$ is null on $\Bnm$) to deduce that for some 
positive constant $C$ independent of $\eps$ we have
\begin{multline*}
 I_\eps=\frac{4}{\eps^2}\int_{\{y\in\R^{n-2}:\,|y|\leq 1-\eps\}}\hskip-0.5cm dy
 \int_{\{(s,t)\in\R^2:\,\eps^2<s^2+t^2<4\eps^2\wedge((1-\eps)^2-|y|^2)\}}
 |(w-z_{\infty})(y,s,t)|^2ds\,dt\\
 \leq C\,\int_{B_{1-\eps}\cap \{\eps^2<x_{n-1}^2+x_n^2<4\eps^2\}}|\nabla (w-z_\infty)|^2dx,
\end{multline*}
from which \eqref{e:Ieps} follows at once.

To provide the recovery sequence we perform the construction above for a sequence $\eps_i\downarrow 0$ 
such that $\nu(\de \{\varphi_{\eps_i}<1\})=0$ for all $i\in\N$, with the choice $\delta_i := \eps_i^4$.
In view of \eqref{e:step1-regolarizzazione2}, \eqref{e:phiuno}, \eqref{e:L2 norm}, \eqref{e:energia4} 
and \eqref{e:Ieps} a simple diagonal argument implies the existence of a subsequence $j_{k_i}\uparrow\infty$ 
such that $w_{k_i}^{\eps_i}\to w$ in $L^2(B_1)$ and 
\[
\limsup_i\GGG_{j_{k_i}}(w_{k_i}^{\eps_i})\leq\GGG_\infty^{(2)}(w).
\]

\medskip

\noindent{\emph{Proof of the $\Gamma$-convergence in case (3).}}
The proof of \eqref{e:liminf} and \eqref{e:limsup} in case (3)
is immediate: the former follows, indeed, from \eqref{e:epic2} and the fact that
$\lim_j \GGG_j(z) = +\infty$; while the latter is trivial.

\medskip

\noindent{\bf Step 4: Improving the convergence of $(z_j)_{j\in \N}$ if $\lim_j\GGG_j(z_j)<+\infty$.}
Standing the latter assumption, we show that actually $(z_j)_{j\in\N}$ converges
strongly to $z_\infty$ in $H^1(B_1)$.

\smallskip
  
To this aim, we use some standard results in the theory of $\Gamma$-convergence.
The equi-coercivity of $(\GGG_j)_{j\in\N}$ established in \eqref{e:equi-coerc},
the Poincar\`e inequality and the condition $\|z_j\|^2_{H^1}=1$ in \eqref{e:distc3} 
imply the existence of an absolute minimizer $\zeta_j$ of $\GGG_j$ on $L^2$ with
fixed $i\in\{1,2\}$.
By \cite[Theorem~7.4]{dalmaso}, for $i=1,2$ we have that there exists $\zeta_\infty \in H^1(B_1)$
such that
\begin{gather}
\zeta_j\to \zeta_\infty \quad \text{in }\,L^2(B_1),\label{e:Gamma1}\\ 
\GGG_j(\zeta_j)\to\GGG^{(i)}_\infty(\zeta_\infty),\label{e:Gamma2}\\
\text{and $\zeta_\infty$ is the unique minimizer of $\GGG_\infty^{(i)}$,}\label{e:Gamma3}
\end{gather}
where we have used the strict convexity of $\GGG_\infty^{(i)}$
to deduce the uniqueness of the minimizer of $\GGG_\infty^{(i)}$.
In addition, using the strong convergence of the traces
in $L^2(\de B_1)\cup L^2(\Bn)$ and the estimate
\begin{equation}\label{e:zetajbdd}
\GGG_j(\zeta_j)\leq\GGG_j(z_j)\leq \sup_j\GGG_j(z_j)<+\infty, 
\end{equation}
we infer that 
\[
\int_{B_1}|\nabla \zeta_j|^2dx \to \int_{B_1}|\nabla \zeta_\infty|^2dx, 
\]
in turn implying the strong convergence of $(\zeta_j)_{j\in\N}$ to $\zeta_\infty$ in $H^1(B_1)$.

Next note that by \eqref{e:epic2} and \eqref{e:zetajbdd} 
$z_j$ is an almost minimizer of $\GGG_j$,  in the following sense:
\[
0\leq \GGG_j(z_j)-\GGG_j(\zeta_j)\leq \kappa_j\,\GGG_j(z_j)\leq\kappa_j\cdot \sup_j\GGG_j(z_j).
\]
Hence, by taking into account that $\kappa_j$ is infinitesimal, that $z_j\rightharpoonup z_\infty$ 
weakly in $H^1(B_1)$ and \eqref{e:Gamma2}, Step~3 yields that 
\[
\GGG_\infty(z_\infty)\leq \liminf_j\GGG_j(z_j)=\lim_j\GGG_j(\zeta_j)=\GGG_\infty^{(i)}(\zeta_\infty),
\]
in both cases $i=1,2$.
Therefore, by the uniqueness of the absolute minimizer of $\GGG_\infty^{(i)}$, 
we conclude that $z_\infty=\zeta_\infty$. Arguing as above, the strong 
convergence of $(z_j)_{j\in\N}$ to $z_\infty$ in $H^1(B_1)$ follows.
In particular, note that by \eqref{e:distc3} we infer
\begin{equation}\label{e:zinfty}
\|z_\infty\|_{H^1}=1.
\end{equation}

\medskip

The rest of the proof is devoted to find a contradiction to 
all the instances in Step~3.
We start with the easier cases (1) and (3). Instead, to rule out case (2) 
we shall need to establish more refined properties of the function $z_\infty$.

\medskip

\noindent{\bf Step 5: Case (1) cannot occur.} 
We recall what we have achieved so far about $z_\infty$, namely
\begin{itemize}
\item[(i)] $\|z_\infty\|_{H^1}=1$,
\item[(ii)] $z_\infty$ is $\sfrac32$-homogeneous and even with respect to $x_n=0$,
\item[(iii)] $z_\infty$ is the unique minimizer of $\GGG_\infty^{(1)}$ with respect
to its own boundary conditions,
\item[(iv)] $z_\infty\in\mathscr{B}_\infty^{(1)}$, i.e.~$z_\infty +\vartheta\,h\geq 0$
on $\Bn$.
\end{itemize}

As an easy consequence of the properties above, we show now that 
\[
w_\infty:=z_\infty+\vartheta h
\]
minimizes the Dirichlet energy among
all maps $w$ such that $w \in w_\infty + H_0^1(B_1)$ and 
$w\geq 0$ in $\Bn$ in the sense of traces. In other words,
$w_\infty$ is a solution of the Signorini problem.
To show this claim, for every $z \in \mathscr{B}_\infty^{(1)}$ we set
$w:= z +\vartheta\,h$ and by means of \eqref{e:firstvar} we write
\begin{align*}
\GGG_\infty^{(1)}(z) & =\int_{B_1}|\nabla w|^2dx-\vartheta^2\int_{B_1}|\nabla h|^2dx
-\frac{3}{2}\int_{\partial B_1}z_\infty^2\,d\cHn\notag\\
&\quad -2\vartheta\int_{B_1}\nabla z\cdot\nabla h\,dx
-4\vartheta\int_{\Bn}z\frac{\partial h}{\partial x_n}d\cHn\notag\\
& \stackrel{\eqref{e:firstvar}}{=}
\int_{B_1}|\nabla w|^2dx-\vartheta^2\int_{B_1}|\nabla h|^2dx
-\frac{3}{2}\int_{\partial B_1}z_\infty^2\,d\cHn\notag\\
&\quad-3\vartheta\int_{\partial B_1}z_\infty\,h\,d\cHn.
\end{align*}
Therefore, since $z_\infty$ is the unique minimizer of $\GGG_\infty^{(1)}$
and $w_\infty \geq 0$ on $\Bn$, it follows from the previous computation that
$w_\infty$ is a solution of the Signorini problem.
Using now the $\sfrac32$-homogeneity of $w_\infty$ and the classification
of global solutions of the thin obstacle problem with such homogeneity
in \cite[Theorem~3]{Athan-Caff-Sal}
(see also \cite[Proposition~9.9]{PSU}), we deduce that
\begin{equation*}
w_\infty=\lambda_\infty h_{\nu_\infty} \in \mathscr{H}_{\sfrac32},
 \qquad \text{for some $\lambda_\infty \geq 0$ and $\nu_\infty\in\bS^{n-2}$}.
\end{equation*}
Eventually, in view of \eqref{e:distc1}, we reach the desired contradiction:
by the strong convergence $z_j\to z_\infty$ in $H^1(B_1)$ (cp. Step~4 above) 
and by \eqref{e:z_j}, we deduce that 
\begin{equation}\label{e:celopiccolo}
\frac{c_j}{\delta_j} = \vartheta_j h + z_j
\to  \vartheta h + z_\infty = w_\infty \in\mathscr{H}_{\sfrac32} 
\quad \text{ in $H^1(B_1)$,}
\end{equation}
which implies, for $j$ sufficiently large,
\begin{gather*}
\dist_{H^1}(c_j, \mathscr{H}_{\sfrac32})\leq \|c_j-\delta_j\lambda_\infty h_{\nu_\infty}\|_{H^1(B_1)} 
\stackrel{\eqref{e:celopiccolo}}{=} o(\delta_j)
< \delta_j = \dist_{H^1}(c_j, \mathscr{H}_{\sfrac32}),
\end{gather*}
having used in the last line that 
$\delta_j \lambda_\infty h_{\nu_\infty} \in \mathscr{H}_{\sfrac32}$.

\medskip

\noindent{\bf Step 6: Case (3) cannot occur.} 
The heuristic idea to rule out case (3) is to correct the scaling 
of the energies in order to get a non-trivial $\Gamma$-limit for 
the rescaled functionals.

\smallskip

More in details, we start recalling that by \eqref{e:energia infinita}
if $\lim_j\GGG_j(z_j)=+\infty$, then 
\begin{equation}\label{e:gamma_j}
 \gamma_j:=-4\vartheta_j\int_{\Bn}z_j\frac{\de h}{x_n}\,d\cH^{n-1}\uparrow+\infty.
\end{equation}
Further, the convergence $z_j\to z_\infty$ in $L^2(\Bn)$ and \eqref{e:coerc} yield
\[
 \lim_j\frac{\gamma_j}{\vartheta_j}=-4
 \lim_j\int_{\Bn}z_j\frac{\de h}{x_n}\,d\cH^{n-1}=-4\int_{\Bn}z_\infty\frac{\de h}{x_n}\,d\cH^{n-1} \in [0,+\infty),
\]
so that 
\begin{equation}\label{e:thetagamma}
\vartheta_j\,\gamma_j^{-\sfrac12}
\uparrow+\infty.
\end{equation}
It is then immediate to deduce that the right rescaling of the functionals $\GGG_j$ is 
obtained by dividing by a factor $\gamma_j^{-1}$: namely, for every $z\in \mathscr{B}_j$ 
we consider ${\gamma_j^{-1}}\GGG_j(z)$ and notice that
\begin{equation}\label{e:Gjtilde}
{\gamma_j^{-1}}\GGG_j(z)=\widetilde{\GGG_j}\Big({\gamma_j^{-\sfrac12}} z\Big),
\end{equation}
where the functional $\widetilde{\GGG_j}$ is given by
\begin{equation}\label{e:Gj-tilde}
\widetilde{\GGG_j}(w):=
\begin{cases}
\displaystyle{\int_{B_1}|\nabla w|^2dx-\frac{3}{2}\int_{\partial B_1}w^2\,d\cHn
-4\,\frac{\vartheta_j}{\gamma_j^{\sfrac12}}\,\int_{\Bn}w\,\frac{\partial h}{\partial x_n}\,d\cHn} & \\
& \hskip-2cm\text{if $w\in \widetilde{\mathscr{B}_{j}}$,} \\
+\infty  & \hskip-2cm \textup{otherwise,}
\end{cases}
\end{equation}
where 
\begin{equation}\label{e:Bj-tilde}
\widetilde{\mathscr{B}}_{j}:=\left\{w\in \gamma_j^{-\sfrac12}z_j+H^1_0(B_1):\,\big(w+\vartheta_j\,\gamma_j^{-\sfrac12}\,h\big)\vert_{\Bn}\geq 0\right\}.
\end{equation}

Setting $\widetilde{z_j}:={\gamma_j^{-\sfrac12}}{z_j}$, by \eqref{e:distc3} and
$\gamma_j\uparrow+\infty$ we get $\widetilde{z_j}\to 0$ in $H^1(B_1)$.
In addition, \eqref{e:Gjtilde} and the very definition of $\gamma_j$
in \eqref{e:gamma_j} imply that
\begin{equation}\label{e:Gjzjtilde}
\widetilde{\GGG_j}(\widetilde{z_j})=1+O(\gamma_j^{-1}).
\end{equation}
Furthermore, \eqref{e:epic2} rewrites as
\[
 (1-\kappa_j)\widetilde{\GGG_j}(\widetilde{z_j})\leq \widetilde{\GGG_j}(\widetilde{z})
\qquad\text{for all $\widetilde{z}\in\widetilde{\mathscr{B}_j}$.}
\]
In particular, by taking into account \eqref{e:thetagamma}, $\tilde z_j \to 0$
in $H^1(B_1)$ and \eqref{e:Gjzjtilde}, namely 
$\lim_j\widetilde{\GGG_j}(\widetilde{z_j})<+\infty$, we can argue exactly 
as in case (2) of Step~3 to deduce that 
\[
\Gamma(L^2(B_1))\hbox{-}\lim_j\widetilde{\GGG_j}= \widetilde{\GGG_\infty}
\]
with
\[
\widetilde{\GGG_\infty}(\widetilde{z}):=
\begin{cases}
\displaystyle{\int_{B_1}|\nabla \widetilde{z}|^2dx} & 
\text{if $\widetilde{z} \in \widetilde{{\mathscr{B}_{\infty}}}$,}\\
+\infty  & \textup{otherwise,}
\end{cases}
\]
where $\widetilde{\mathscr{B}_{\infty}} := 
\{\widetilde{z}\in H^1_0(B_1):\, \widetilde{z}\vert_{\Bnm}=0\}$.

By Step~4 and the convergence $\widetilde{z_j}\to 0$ in $H^1(B_1)$, the null 
function turns out to be the unique minimizer of $\widetilde{\GGG_\infty}$ and  
$\lim_j \widetilde{\GGG_j}(\widetilde{z_j})=\widetilde{\GGG_\infty}(0)=0$,
thus leading to a contradiction to \eqref{e:Gjzjtilde}.

\medskip

We are then left with excluding case (2) of Step~3 to end the contradiction argument.
To this aim, as already pointed out, we need to investigate more closely the 
properties of the limit $z_\infty$.

From now on we assume that we are in the setting of case (2) of Step~3: i.e.~$\vartheta=+\infty$
and $\lim_j \GGG_j(z_j) < + \infty$.
\medskip

\noindent{\bf Step 7: An orthogonality condition.}
We exploit the fact that $\psi_j$ is a point of minimal distance of $c_j$ 
from $\mathscr{H}_{\sfrac32}$ to deduce that $z_\infty$ is orthogonal to the tangent 
space $T_h\mathscr{H}_{\sfrac32}$.

\smallskip

We start noticing that $\vartheta=+\infty$ implies that
$\lambda_j >0$ for all $j$ large enough. 
Moreover, by the minimal distance condition \eqref{e:distc1}
we infer that, for all $\nu\in\bS^{n-2}$ and $\lambda\geq 0$,
\[
\|c_j-\psi_j\|_{H^1}\leq\|c_j-\lambda h_\nu\|_{H^1},
\]
that, by the very definition of $z_j$ in \eqref{e:z_j}, can actually be rewritten as
\begin{equation*}
\delta_j\|z_j\|_{H^1}\leq\|\psi_j-\lambda h_\nu+\delta_jz_j\|_{H^1},
\end{equation*}
or, equivalently,
\begin{equation}\label{e:orthog}
-\|\psi_j-\lambda h_\nu\|^2_{H^1(B_1)}\leq 2\delta_j
\langle z_j, \psi_j-\lambda h_\nu\rangle.
\end{equation}
Therefore, assuming $(\lambda_j, e_{n-1}) \neq (\lambda, \nu)$ and renormalizing
\eqref{e:orthog}, we get 
\[
-\|\psi_j-\lambda h_\nu\|_{H^1}\leq 2\delta_j
\langle z_j, \frac{\psi_j-\lambda h_\nu}{\|\psi_j-\lambda h_\nu\|_{H^1}}\rangle,
\]
and by taking the limit $(\lambda, \nu) \to (\lambda_j, e_{n-1})$ we conclude
(recall the definition of the tangent space in \eqref{e:tg-1})
\[
\langle z_j, \zeta\rangle=0\qquad\textrm{ for all 
$\zeta\in T_{\psi_j}\mathscr{H}_{\sfrac32}=T_{h}\mathscr{H}_{\sfrac32}$}, 
\]
where we used that $\lambda_j>0$ in computing the tangent vectors.
Now letting $j\uparrow\infty$ in the equality above we get that
\begin{equation}\label{e:perp1}
\langle z_\infty, \zeta\rangle=0\qquad\textrm{ for all $\zeta\in T_{h}\mathscr{H}_{\sfrac32}$}.
\end{equation}

\medskip

\noindent{\bf Step 8: Identification of $z_\infty$ in case (2) of Step~3.}
We show that  
\begin{equation}\label{e:rigidity}
z_\infty (x) = a_0 \,h (x)+ \left(\sum_{i=1}^{n-2} a_i\, x_i\right) \sqrt{\sqrt{x_{n-1}^2+x_n^2}+x_{n-1}},
\end{equation}
for some $a_0, \ldots, a_{n-2} \in \R$, 
i.e.~$z_\infty\in T_h\mathscr{H}_{\sfrac32}$ (cp.~\eqref{e:tg}).

\smallskip

The above claim is consequence of the following facts:
\begin{itemize}
\item[(a)] $z_\infty$ solves the boundary value problem
\begin{gather}\label{e:EL}
\begin{cases}
\Delta z_\infty = 0 &\text{in } B_1\setminus \Bnm,\\
z_\infty =0 & \text{on } \Bnm;
\end{cases}
\end{gather}

\item[(b)] $z_\infty(x',x_n) = z_\infty(x',-x_n)$ for every $(x',x_n) \in B_1$;

\item[(c)] $z_\infty$ is $\sfrac32$-homogeneous.
\end{itemize}

The proof consists of three parts:
\begin{itemize}
\item[(I)] to show the H\"older regularity of $z_\infty$ and of all its transversal derivatives in the sense of distributions
\[
v_{\alpha}:=\frac{\de^{\alpha_1}}{\de x_1^{\alpha_1}}\cdots \frac{\de^{\alpha_{n-2}} z_\infty}{\de x_{n-2}^{\alpha_{n-2}}}  \quad \text{with } \alpha = (\alpha_1, \ldots, \alpha_{n-2}) \in \N^{n-2};
\]

\item[(II)] the use of a bidimensional conformal transformation in the variable $(x_{n-1},x_n)$
to reduce the problem to the upper half ball $B_1^+$;

\item[(III)] the classification of all $\sfrac32$-homogeneous solutions.
\end{itemize}

\smallskip

As for (I), we start noticing that for every $\alpha \in \N^{n-2}$
(in particular also for $z_\infty = v_{(0,\ldots,0)}$),
it follows from (a) and (b) that $v_\alpha\vert_{B_1^+}$
is a solution to the boundary value problem
\begin{gather}\label{e:EL2}
\begin{cases}
\Delta v_\alpha = 0 &\text{in } B_1^+,\\
v_\alpha =0 & \text{on } \Bnm,\\
\frac{\de v_\alpha}{\de x_n} =0 & \text{on } B_1'\setminus \Bnm.
\end{cases}
\end{gather}
It then follows from \cite[Theorem 14.5]{stampacchia} that 
the distributions $v_\alpha$ are represented by
uniformly H\"older continuous functions in every $\bar B_r^+$ with $r<1$.
In particular, by homogeneity,
we conclude that $v_\alpha$ is H\"older continuous
in the whole $B_1\subset \R^n$.

\smallskip

Next, for (II) we follow a suggestion by S.~Luckhaus \cite{luckhaus}
(see also the appendix of \cite{desilva-savin} for a similar procedure)
and consider the conformal transformation $\Phi: B_1^+ \to B_1\setminus \Bnm$ 
defined by 
\begin{equation}\label{e:change}
(x_1, \ldots, x_{n-2}, y_1, y_2) \mapsto (x_1, \ldots, x_{n-2}, y_2^2 - y_1^2, - 2\, y_1\, y_2),
\end{equation}
and set
\[
u_\alpha := v_\alpha \circ \Phi \quad \forall \; \alpha \in \N^{n-2}.
\]
We next introduce the following Laplace operators:
\begin{gather*}
\Delta' := \frac{\de^2}{\de x_1^2} + \cdots + \frac{\de^2}{\de x_{n-2}^2}\\
\Delta'' := \frac{\de^2}{\de x_{n-1}^2} + \frac{\de^2}{\de x_{n}^2}\\
\Delta_y := \frac{\de^2}{\de y_1^2} + \frac{\de^2}{\de y_{2}^2}.
\end{gather*}
By a simple computation, it follows that (we set for simplicity
$x' := (x_1, \ldots, x_{n-2})$)
\begin{align}\label{e:equazione in y}
\Delta_y u_\alpha (x', y_1, y_2) & = 4\,(y_1^2 + y_2^2)\, \Delta'' v_\alpha
(x', y_2^2 - y_1^2, - 2\, y_1\, y_2)\notag\\
& \stackrel{\eqref{e:EL}}{=} - 4\,(y_1^2 + y_2^2)\, \Delta' v_\alpha
(x', y_2^2 - y_1^2, - 2\, y_1\, y_2)
\end{align}
for all $\alpha \in \N^{n-2}$ and for all $(x',y_1,y_2) \in B_1^+$.

Note that the right hand side of \eqref{e:equazione in y} is H\"older
continuous, because by (I) $\Delta' v_\alpha$ is H\"older continuous
for every $\alpha \in \N^{n-2}$.
Therefore, the usual Schauder theory for the Laplace equation implies
that $u_\alpha$ is twice
continuously differentiable with H\"older continuous second order
partial derivatives.

We can then bootstrap this conclusion and infer that
\begin{align}\label{e:equazione in y2}
\Delta' v_\alpha (x', y_2^2 - y_1^2, - 2\, y_1\, y_2) & =
\Delta' u_\alpha (x', y_1, y_2)\notag\\
&= \sum_{i=1}^{n-2} u_{\alpha + 2\,e_i} (x', y_1, y_2)
\end{align}
with $e_1 = (1, 0, \ldots, 0)$, $e_2 = (0, 1, \ldots, 0)$, {\ldots},
$e_{n-2} = (0, 0, \ldots, 1) \in \N^{n-2}$, and therefore
$\Delta'' v_\alpha$ is twice differentiable,
thus implying by \eqref{e:equazione in y} that $u_\alpha$ is $C^{4,\kappa}$
for some $\kappa \in (0,1)$, and so on.
In conclusion, it follows from \eqref{e:EL2},
\eqref{e:equazione in y} and \eqref{e:equazione in y2} that
$u_0 \in C^\infty (\bar B_1^+)$ and
\begin{gather}\label{e:u_0}
u_0(x',y_1, 0) = 0 \quad \forall \; |(x',y_1,0)| <1.
\end{gather}

\smallskip

In order to perform the final classification in (III), we consider a Taylor
expansion of $u_0$ up to order three. For the sake of simplicity
we write $(w_1, \ldots, w_n) = (x', y_1, y_2)$ and use \eqref{e:u_0}
to simplify the expansion: there exist
real numbers $b_l, b_{i,j} \in \R$ for $l \in \{0,\ldots, n\}$
and $i,j \in \{1,\ldots,n\}$ such that $b_{i,j}=b_{j,i}$
for every $i,j$ and 
\[
u_0(w) = w_{n} \left(b_0 + \sum_{i=1}^{n}b_i\, w_i
+ \sum_{i,j=1}^{n} b_{i,j} w_i\,w_j \right) + g(w),
\]
with $g(w) \leq C\, |w|^4$ for every $w \in B_1^+$
for some $C>0$.

Next we perform the change of coordinates $\Phi^{-1}$ to deduce
an expansion for $z_\infty$.
First note that
\begin{equation*}
\Phi^{-1}: B_1\setminus (B_1'\cap\{x_{n-1} \leq 0\})\to B_1^+
\end{equation*}
\begin{equation*}
\Phi^{-1}(x_1, \ldots, x_{n}) =
\big(x_1, \ldots, x_{n-2}, f(x_{n-1},x_n), g(x_{n-1},x_n)\big).
\end{equation*}
with
\begin{equation}\label{e:f}
f(x_{n-1},x_n) := \frac{\textup{sgn}(x_n)}{\sqrt{2}}\sqrt{\sqrt{x_{n-1}^2+x_n^2}-x_{n-1}}
\end{equation}
\begin{equation}\label{e:g}
g(x_{n-1},x_n) := \frac{1}{\sqrt{2}}\sqrt{\sqrt{x_{n-1}^2+x_n^2}+x_{n-1}}
\end{equation}
and
\[
\textup{sgn}(x_n) =
\begin{cases}
1 & \text{if } x_n\geq 0\\
1 & \text{if } x_n< 0.
\end{cases}
\]
Therefore, for every $x \in B_1 \setminus \Bnm$
we have
\begin{align}
z_\infty &(x) = u_0 \circ \Phi^{-1} (x) = g(x_{n-1},x_n)
\left(b_0 + \sum_{i=1}^{n-2}b_i\, x_i\right)\label{e:exp1}\\
&+ g(x_{n-1},x_n) \Big( b_{n-1}\, f(x_{n-1},x_n)
+b_n\, g(x_{n-1},x_n) + \sum_{i,j=1}^{n-2} b_{i,j} x_i\,x_j\Big)\label{e:exp2}\\
&+ 2\,g(x_{n-1},x_n) \sum_{i=1}^{n-2} \big(b_{i,n-1 }\, x_i\,  f(x_{n-1}, x_n)
+ b_{i,n }\, x_i\, g(x_{n-1}, x_n)\big)\label{e:exp3}\\
&+ b_{n-1,n-1}\,g(x_{n-1},x_n) \,f^2(x_{n-1},x_n) 
+ 2\, b_{n-1,n}\, f(x_{n-1},x_n)\, g^2(x_{n-1},x_n) \notag\\
&+ b_{n,n}\,g^3(x_{n-1},x_n) + H(x),\label{e:exp6}
\end{align}
with
\[
|H(x)| \leq C\, |x|^2 \quad \forall \;x \in B_1\setminus \Bnm
\]
for some $C>0$.

Due to the $\sfrac32$-homogeneity of $z_\infty$ and the $\sfrac12$-homogeneity
of $f$ and $g$, we deduce that the first term in \eqref{e:exp1}, as well as
the first two terms of \eqref{e:exp2}, \eqref{e:exp3} and 
the function $H$ in \eqref{e:exp6}
have the wrong homogeneity and therefore are identically zero, thus reducing the expansion
of $z_\infty$ to the following
\begin{align}
z_\infty (x) &= g(x_{n-1},x_n)\sum_{i=1}^{n-2}b_i\, x_i
+ b_{n-1,n-1}g(x_{n-1},x_n) \,f^2(x_{n-1},x_n)\notag\\
&+ 2\, b_{n-1,n}\, f(x_{n-1},x_n)\, g^2 (x_{n-1},x_n)
+ b_{n,n}g^3(x_{n-1},x_n). \label{e:exp7}
\end{align}
In addition, \eqref{e:f} and \eqref{e:g} yield
\[
f(x_{n-1},x_n) \, g(x_{n-1},x_n) = \frac{x_n}{2},
\]
and, in turn, plugging the latter identity in \eqref{e:exp7} implies
\begin{multline}\label{e:exp8}
z_\infty (x)=g(x_{n-1},x_n)\cdot\\\cdot\Big(\sum_{i=1}^{n-2}b_i\, x_i
+ b_{n-1,n-1} \,f^2(x_{n-1},x_n)
+ b_{n-1,n}\, x_n + b_{n,n}\,g^2(x_{n-1},x_n)\Big).
\end{multline}
To conclude the proof of Step~8 we need only to check for which choices
of the coefficients the right hand side of \eqref{e:exp8} is harmonic.
To this aim we notice that $g$ is itself a harmonic function, i.e.~$\Delta g = 0$ 
in $B_1\setminus \Bnm$. Therefore, we compute $\Delta z_\infty$ thanks to 
\eqref{e:f}, \eqref{e:g} and \eqref{e:exp8} 
as follows
\begin{gather}
 \Delta z_\infty(x)=
\frac{(3\,b_{n,n} + b_{n-1,n-1})
\sqrt{2\,\sqrt{x_{n-1}^2 + x_n^2}+x_{n-1}}}{\sqrt{x_{n-1}^2 + x_n^2}}
\notag\\
+ \frac{b_{n-1,n}\,x_n}{\sqrt{x_{n-1}^2 + x_n^2}\,\sqrt{
\sqrt{x_{n-1}^2 + x_n^2}+x_{n-1}}}.\label{e:conto laplaciano}
\end{gather}
Note that the function on the right hand side of \eqref{e:conto laplaciano} 
is identically zero on $B_1\setminus \Bnm$ if and only if
\[
3\,b_{n,n} + b_{n-1,n-1} =0\quad \text{and}\quad b_{n-1,n}=0.
\]
Thus, coming back to \eqref{e:exp8} we conclude that
\begin{align*}
z_\infty (x) &= g(x_{n-1},x_n)\,\Big(\sum_{i=1}^{n-2}b_i\, x_i
-3\,b_{n,n}\,f^2(x_{n-1},x_n) + b_{n,n}\,g^2(x_{n-1},x_n)\Big)\notag\\
& \stackrel{\eqref{e:f},\,\eqref{e:g}}{=} g(x_{n-1},x_n)\sum_{i=1}^{n-2}b_i\, x_i+
b_{n,n}\,g(x_{n-1},x_n) \left(2\,x_{n-1} - \sqrt{x_{n-1}^2 + x_n^2}
\right),
\end{align*}
which is the desired formula \eqref{e:rigidity} for $a_0 = b_{n,n}$ and 
$a_i = b_i$ for $i=1,\ldots, n-2$.

\medskip

\noindent{\bf Step 9: Case (2) of Step~3 cannot occur.} 
We finally reach a contradiction by excluding
also case (2) in Step~3.
 \smallskip
 
We use the orthogonality condition derived in Step~7, i.e.,
\begin{equation}\label{e:orto}
 \langle z_\infty, \zeta\rangle=0\qquad\textrm{ for all $\zeta\in T_{h}\mathscr{H}_{\sfrac32}$.}
\end{equation}
Since $z_\infty$ has the form in \eqref{e:rigidity},
we can choose $h$ as test function 
in \eqref{e:orto} to deduce $a_0=0$. Then take
$\zeta=v_{e_{n-1},\xi}$ (cp.~\eqref{e:vnuxi}) to deduce $a_1=\ldots=a_{n-2}=0$ by the 
arbitrariness of $\xi\in\mathbb{S}^{n-1}$ with $\xi\cdot e_n=\xi\cdot e_{n-1}=0$.

Therefore, $z_\infty$ is the null function, contradicting \eqref{e:zinfty}.
In this way we have excluded all the cases of Step~3 and conclude the proof of the
theorem.
\end{proof}
\medskip

\subsection{Even frequencies: the lowest stratum of the singular set}

In this subsection we prove Theorem~\ref{t:epithin2m}. 
As already remarked the arguments are similar to those of 
Theorem~\ref{t:epithin32}, some simplifications are actually occurring. Therefore, 
we shall only underline the substantial changes. Further, we keep 
the notation introduced in Theorem~\ref{t:epithin32} except for $\GGG$, that 
in the ensuing proof stands for $\GGG_{2m}$. 
\begin{proof}[Proof of Theorem~\ref{t:epithin2m}]
We start off as in  Theorem~\ref{t:epithin32} with a contradiction argument assuming the existence 
of $\kappa_j,\,\delta_j\downarrow 0$, and of $c_j\in H^1(B_1)$ such that $c_j$ is $2m$-homogeneous, 
$c_j\geq 0$ on $\Bn$, 
\begin{equation}\label{e:distc2m}
{\mathrm{dist}_{H^1}}\big(c_j,\mathscr{H}_{2m}\big)=\delta_j,
\end{equation}
\begin{equation}\label{e:epic2m}
 (1-\kappa_j)\GGG(c_j) \leq \inf_{v\in \mathscr{A}_{c_j}}\GGG(v)
\end{equation}
and $\psi_j:=P(c_j)\in \mathscr{H}_{2m}^{(0)}$ with
\begin{equation}\label{e:distc12m}
 \|c_j-\psi_j\|_{H^1}={\mathrm{dist}_{H^1}}\big(c_j,\mathscr{H}_{2m}\big)=\delta_j\qquad\text{for all $j \in \N$}.
\end{equation}

\medskip

We divide the rest of the proof in some intermediate steps corresponding to those of Theorem~\ref{t:epithin32}.

\medskip

\noindent{\bf Step 1: Introduction of a family of auxiliary functionals.}
We rewrite inequality \eqref{e:epic2m} conveniently and 
interpret it as an almost minimality condition.

\smallskip

For fixed $j$, we use \eqref{e:firstvar2m} and algebraic manipulations to rewrite \eqref{e:epic2m} 
for all $v \in \mathscr{A}_{c_j}$ as 
\begin{equation}\label{e:quasi minchia2m}
(1-\kappa_j)\GGG(c_j-\psi_j)\leq \GGG(v-\psi_j).
\end{equation}
Setting 
\begin{equation}\label{e:z_j2m}
z_j:=\frac{c_j-\psi_j}{\delta_j},
\end{equation}
\[
\hat{\psi}_j:=\frac{\psi_j}{\|\psi_j\|_{H^1}}\quad \text{ and }\quad
\vartheta_j:=\frac{\|\psi_j\|_{H^1}}{\delta_j},
\]
\eqref{e:quasi minchia2m} reduces to
\begin{equation}\label{e:epic22m}
(1-\kappa_j)\GGG_j(z_j)\leq \GGG_j(z)\quad \text{ for all }z\in {\mathscr{B}}_{j},
\end{equation}
where $\GGG_j : L^2(B_1) \to (-\infty, +\infty]$ is given by
\begin{equation}\label{e:Gj2m}
\GGG_j(z):=
\begin{cases}
\displaystyle{\int_{B_1}|\nabla z|^2dx-2m\int_{\partial B_1}z_j^2\,d\cHn} & 
\text{if $z\in {\mathscr{B}_{j}}$,} \\
+\infty  & \textup{otherwise,}
\end{cases}
\end{equation}
and
\begin{equation}\label{e:Bj2m}
{\mathscr{B}}_{j}:=\left\{z\in z_j+H^1_0(B_1):\,(z+\vartheta_j\hat{\psi}_j)\vert_{\Bn}\geq 0\right\}.
\end{equation}
Moreover, note that by \eqref{e:distc12m} and \eqref{e:z_j2m}
\begin{equation}\label{e:distc32m}
 \|z_j\|_{H^1(B_1)}=1.
\end{equation}
This implies that we can extract a subsequence (not relabeled) such that
\begin{itemize}
\item[(a)] $(z_j)_{j \in \N}$ converges weakly in $H^1(B_1)$ to some $z_\infty$;
\item[(b)] the corresponding traces $(z_j\vert_{\Bn})_{j \in \N}$;
 converge strongly in $L^2(\Bn)\cup L^2(\partial B_1)$;
\item[(c)] $(\vartheta_j)_{j\in\N}$ has a limit $\vartheta\in[0,\infty]$;
\item[(d)] $(\hat{\psi}_j)_{j \in \N}$ converges in $H^1(B_1)$ to some non-trivial $2m$-homogeneous 
harmonic polynomial $\psi_\infty$ satisfying $\psi_\infty\geq 0$ on $\Bn$. Actually, being the 
$\psi_j$'s polynomials the convergence occurs in any $C^\ell_{}(B_1)$ norm.
\end{itemize}

\medskip

\noindent{\bf Step 2: First properties of $(\GGG_j)_{j\in\N}$.}
We establish the equi-coercivity and some further properties
of the family of the auxiliary functionals $(\GGG_j)_{j\in\N}$.

\smallskip

In this case equi-coercivity is a straightforward consequence of definition \eqref{e:Gj2m} and 
item (b) in Step~1: for $j$ sufficiently large and for all $w \in H^1(B_1)$
\begin{equation}\label{e:equi-coerc2m}
\int_{B_1}|\nabla w|^2dx-3m\int_{\partial B_1}z^2_\infty\leq\GGG_j(w).
\end{equation}
Moreover notice that \eqref{e:distc32m} and item (b) imply
\begin{equation}\label{e:Gjzj2m}
\sup_j|\GGG_j(z_j)|<\infty.
\end{equation}

\medskip

\noindent{\bf Step 3: Asymptotic analysis of $(\GGG_j)_{j\in\N}$.}
Here we prove a $\Gamma$-convergence result for the family of energies $\GGG_j$. 

\smallskip

More precisely, we distinguish two cases.

\begin{itemize}
\item[(1)] If $\vartheta \in [0, +\infty)$, then
\[
(z_\infty+\vartheta\,\psi_\infty)\vert_{\Bn}\geq0,
\]
and $\Gamma(L^2(B_1))\hbox{-}\lim_j \GGG_j = \GGG_\infty^{(1)}$, 
where
\[
\GGG_\infty^{(1)}(z):=
\begin{cases}
\displaystyle{
\int_{B_1}|\nabla z|^2dx-2m\int_{\partial B_1}z_\infty^2\,d\cHn} & \text{if $z \in {\mathscr{B}^{(1)}_{\infty}}$,}\\
+\infty  &  \textup{otherwise,}
\end{cases}
\]
and ${\mathscr{B}_{\infty}^{(1)}} := \{z\in z_\infty+H^1_0(B_1):\, (z+\vartheta\,\psi_\infty)\vert_{\Bn}\geq0\}$.

\item[(2)] If $\vartheta = +\infty$ , $\Gamma(L^2(B_1))\hbox{-}\lim_j \GGG_j=\GGG_\infty^{(2)}$,
where
\[
\GGG_\infty^{(2)}(z):=
\begin{cases}
\displaystyle{
\int_{B_1}|\nabla z|^2dx-2m\int_{\partial B_1}z_\infty^2\,d\cHn} & 
\text{if $z \in {\mathscr{B}^{(2)}_{\infty}}$,}\\
+\infty  & \textup{otherwise,}
\end{cases}
\]
and ${\mathscr{B}^{(2)}_{\infty}} := z_\infty+H^1_0(B_1)$.
\end{itemize}

\medskip

\noindent{\emph{Proof of the $\Gamma$-convergence in case (1).}}
The fact that $z_\infty \in \mathscr{B}_\infty^{(1)}$ and the $\liminf$ inequality \eqref{e:liminf} 
can be deduced exactly as in case (1) of Theorem~\ref{t:epithin32}.

\smallskip

Instead, for what concerns the $\limsup$ inequality \eqref{e:limsup}, after performing the reductions 
in the corresponding step of Theorem~\ref{t:epithin32}, one has to take
\[
w_k^r:=\varphi\,(w+\vartheta\,\psi_\infty-\vartheta_{j_k}\,\hat{\psi}_{j_k})+(1-\varphi)\,z_{j_k}
\]
to infer \eqref{e:limsupr}. The conclusion then follows by a diagonalization argument.

\noindent{\emph{Proof of the $\Gamma$-convergence in case (2).}}
The fact that $z_\infty \in \mathscr{B}_\infty^{(2)}$ and the $\liminf$ inequality \eqref{e:liminf} are 
simple consequences of the lower semicontinuity of the Dirichlet energy under $L^2$ convergence and 
of the equi-coercivity of $(\GGG_j)_{j\in\N}$ in \eqref{e:equi-coerc2m}.
\smallskip

For the proof of the the $\limsup$ inequality \eqref{e:limsup} we need first to observe that the set
$\{\psi_\infty=0\}\cap\Bn$ is contained in a $(n-2)$-dimensional subspace $H$.
In order to prove this claim, we start noticing that
$\psi_\infty$ is a 
non-trivial $2m$-homogeneous harmonic polynomial satisfying $\psi_\infty\geq 0$ on $\Bn$. Therefore, 
its zero set is a cone containing the origin. Moreover, $\psi_j|_{\Bn}$ 
is convex by \cite[Theorem~1]{Athan-Caff}: this implies that $\{\psi_\infty=0\}\cap\Bn$ is 
actually a convex cone. Since $\psi_\infty|_{\Bn}\geq 0$ we infer from \eqref{e:dernorm2m} that
$\{\psi_\infty=0\}\cap\Bn\subseteq\{\psi_\infty=|\nabla\psi_\infty|=0\}$. Thus the harmonicity of 
$\psi_\infty$ finally yields the claim thanks to \cite[Theorem 3.1]{Caffarelli-Friedman}
(see also \cite[Lemma~1.9]{Hardt-Simon}).

Given this, the construction of the recovery sequence is analogous to that of case (2) 
in Theorem~\ref{t:epithin32}. With $K:=\partial B_1$, let $G_\eps$ and $\phi_\eps$ be defined
correspondingly, then $w^{\eps,\delta}:=\phi_\eps\,v+(1-\phi_\eps)w$ satisfies
\eqref{e:step1-regolarizzazione1}-\eqref{e:step1-regolarizzazione2}. Finally, assuming 
$H\subseteq\{x_{n-1}=x_n=0\}$, and setting 
$w^{\eps,\delta}_k:=\chi_\eps\,w^{\eps,\delta}+(1-\chi_\eps)z_{j_k}$ we conclude by choosing
$\delta=\eps^2$ and by a suitable diagonalization argument.

\medskip

\noindent{\bf Step 4: Improving the convergence of $(z_j)_{j\in \N}$.}
$(z_j)_{j\in\N}$ converges strongly to $z_\infty$ in $H^1(B_1)$.

\smallskip

This follows thanks to \eqref{e:Gjzj2m} and by standard $\Gamma$-convergence arguments
(cp. the corresponding step of Theorem~\ref{t:epithin32}). In particular, $\|z_\infty\|_{H^1}=1$.

\medskip

To reach the final contradiction we exclude next both the instances in Step~3 above.

\medskip

\noindent{\bf Step 5: Case (1) cannot occur.} 
We recall what we have achieved so far about $z_\infty$, namely
\begin{itemize}
\item[(i)] $\|z_\infty\|_{H^1}=1$,
\item[(ii)] $z_\infty$ is $2m$-homogeneous and even with respect to $x_n=0$,
\item[(iii)] $z_\infty$ is the unique minimizer of $\GGG_\infty^{(1)}$ with respect
to its own boundary conditions,
\item[(iv)] $z_\infty\in\mathscr{B}_\infty^{(1)}$, i.e.~$z_\infty +\vartheta\,\psi_\infty\geq 0$
on $\Bn$.
\end{itemize}

\smallskip

Arguing as in Step~5 of Theorem~\ref{t:epithin32}, the properties above and a change of variables for
$\GGG_\infty^{(1)}$ show that $w_\infty:=z_\infty+\vartheta \psi_\infty$ minimizes the Dirichlet energy 
among all maps $w \in w_\infty + H_0^1(B_1)$ such that $w\geq 0$ in $\Bn$ in the sense of traces. 

Therefore, $w_\infty$ is a $2m$-homogeneous solution of the Signorini problem, that is 
$w_\infty\in \mathscr{H}_{2m}$ by \cite[Lemma~1.3.3]{Garofalo-Petrosyan} (see also
\cite[Proposition~9.11]{PSU}). 
We get a contradiction as
\begin{equation}\label{e:celopiccolo2m}
\frac{c_j}{\delta_j} = \frac{\psi_j}{\delta_j}+ z_j=\vartheta_j \hat{\psi}_j + z_j
\to  \vartheta\, \psi_\infty + z_\infty = w_\infty \in\mathscr{H}_{2m} 
\quad \text{ in $H^1(B_1)$,}
\end{equation}
which implies, for $j$ sufficiently large,
\begin{gather*}
\dist_{H^1}(c_j, \mathscr{H}_{2m})\leq \|c_j-\delta_jw_\infty\|_{H^1(B_1)} 
\stackrel{\eqref{e:celopiccolo2m}}{=} o(\delta_j)
< \delta_j = \dist_{H^1}(c_j, \mathscr{H}_{2m}),
\end{gather*}
having used in the last line that $\delta_j w_\infty \in \mathscr{H}_{2m}$.

\medskip

We are then left with excluding case (2) of Step~3 to end the contradiction argument.
Since in the present proof the Step~3 has two cases rather than three, for a more clear 
comparison with Theorem~\ref{t:epithin32} we number the next step as 7 rather than 6.
\medskip

\noindent{\bf Step 7: An orthogonality condition.}
We exploit the fact that $\psi_j$ is the point of minimal distance of $c_j$ 
from $\mathscr{H}_{2m}$ to deduce that $z_\infty$ is orthogonal to $\widehat{\mathscr{H}}_{2m}$.

\smallskip

The very definitions of $\psi_j$ in \eqref{e:distc12m} and of $z_j$ in \eqref{e:z_j2m} 
imply that for all $\psi\in\mathscr{H}_{2m}$
\begin{equation*}
-\|\psi_j-\psi\|^2_{H^1(B_1)}\leq 2\delta_j
\langle z_j, \psi_j-\psi\rangle.
\end{equation*}
Therefore, assuming $\psi_j\neq \psi$ and renormalizing, 
we get 
\[
-\|\psi_j-\psi\|_{H^1}\leq 2\delta_j\langle z_j, \frac{\psi_j-\psi}{\|\psi_j-\psi\|_{H^1}}\rangle,
\]
and by taking the limit $\psi\to \psi_j$ in $H^1$ we conclude that 
\[
\langle z_j, \zeta\rangle=0
\qquad\textrm{ for all $\zeta\in T_{\psi_j}\mathscr{H}_{2m}
\stackrel{\eqref{e:tg2m0}}{=}\widehat{\mathscr{H}}_{2m}$.}
\]
Now letting $j\uparrow\infty$ in the equality above we get that
\begin{equation}\label{e:perp12m}
\langle z_\infty, \zeta\rangle=0\qquad\textrm{ for all $\zeta\in\widehat{\mathscr{H}}_{2m}$}.
\end{equation}

\medskip

\noindent{\bf Step 8: Identification of $z_\infty$ in case (2) of Step~3.}
We have that 
\begin{equation}\label{e:rigidity2m}
z_\infty\in \widehat{\mathscr{H}}_{2m}\setminus\{0\}.
\end{equation}

\smallskip

Indeed we have already shown that
\begin{itemize}
\item[(i)] $\|z_\infty\|_{H^1}=1$,
\item[(ii)] $z_\infty$ is $2m$-homogeneous and even with respect to $x_n=0$,
\item[(iii)] $z_\infty$ is the unique minimizer of $\GGG_\infty^{(2)}$
with respect to its own boundary conditions, i.e.~is harmonic.
\end{itemize}

\medskip

\noindent{\bf Step 9: Case (2) of Step~3 cannot occur.} 
We finally reach a contradiction by excluding also case (2) in Step~3.

\smallskip

Because of \eqref{e:rigidity2m} we can choose $z_\infty$ itself as a test function in 
\eqref{e:perp12m} to deduce that it is actually the null function, thus contradicting 
\eqref{e:zinfty}.

In this way we have excluded all the cases of Step~3 and we can conclude the proof of the
theorem.
\end{proof}

%
%
\section{Regularity of the free boundary}\label{s:free boundary}

In this section we show how to derive the regularity of the free boundary around
points of least frequency as a simple consequence of the epiperimetric inequality.
To this aim we need to recall some notation and some results from the literature.
Since we are going to use the monotonicity formulas proven in
\cite{Garofalo-Petrosyan}, we try to follow the notation therein as closely
as possible.
In what follows $u\in H^1(B_1)$ shall denote a solution to the Signorini
problem (even symmetric respect to $\{x_n=0\}$).
We denote with $\Lambda(u)$ the coincidence set, i.e.
\[
\Lambda(u) := \big\{(\hat x, 0) \in \Bn\,: u(\hat x, 0) =0 \big\}
\]
and by $\Gamma(u)$ the free boundary of $u$, namely the topological boundary
of $\Lambda(u)$ in the relative topology of $\Bn$.

For $x_0 \in \Gamma(u)$ and $0<r < 1 - |x_0|$
let $N^{x_0}(r,u)$ be the \textit{frequency function} defined by
\[
N^{x_0}(r,u) := \frac{r\int_{B_r(x_0)}|\nabla u|^2\,dx}{\int_{\de B_r(x_0)} u^2\,d\cHn}
\]
provided $u\vert_{\de B_r(x_0)} \not\equiv 0$.
As proven in \cite{Athan-Caff-Sal} the function $(0, \dist(x_0,\de B_1))
\ni r\mapsto N^{x_0}(r, u)$ is nondecreasing for every $x_0 \in \Bn$.
It is then possible to define
the limit $N^{x_0}(0^+, u) := \lim_{r \downarrow 0} N^{x_0}(r, u)$ and
as shown in \cite{Athan-Caff-Sal} it holds $N^{x_0}(0^+, u) \geq \sfrac32$
for every $x_0 \in \Gamma(u)$.
We then denote with $\Gamma_{\sfrac32}$ the points of the free boundary with minimal frequency, also called 
\textit{regular points} in \cite{Garofalo-Petrosyan}:
\[
\Gamma_{\sfrac32} := \big\{x_0 \in \Gamma(u) \,: N^{x_0}(0^+,u) = \sfrac32 \big\}.
\]
Note that by the monotonicity of the frequency it follows that
$\Gamma_{\sfrac32}(u) \subset \Gamma(u)$ is open in the relative
topology.
We also introduce the shorthand notation
\[
D^{x_0}(r) := \int_{B_r(x_0)} |\nabla u|^2\,dx \quad
\text{and}\quad 
H^{x_0}(r) := \int_{\de B_r(x_0)} u^2\,d\cHn
\]
and we always omit to write the point $x_0$ in the notation if $x_0 =0$.
Finally we recall the following simple consequence of the monotonicity of the frequency
function, proven in \cite[Lemma~1]{Athan-Caff-Sal}:
the function $(0,1-|x_0|)\ni r\mapsto \frac{H^{x_0}(r)}{r^{n+2}}$ is nondecreasing
and in particular
\begin{equation}\label{e:H dall'alto}
H^{x_0}(r) \leq \frac{H(1-|x_0|)}{(1-|x_0|)^{n+2}}\, r^{n+2} \quad\forall \; 0<r<1-|x_0|,
\end{equation}
and for every $\eps>0$ there exists $r_0(\eps)>0$ such that
\begin{equation}\label{e:H dal basso}
H(r) \geq \frac{H(r_0)}{r_0^{n+2+\eps}}\, r^{n+2+\eps} \quad\forall \; 0<r<r_0.
\end{equation}
For readers' convenience we provide a short account of these statements
in Appendix~\ref{a:frequency}.

\subsection{Decay of the boundary adjusted energy}
The boundary adjusted energy $\GGG$ considered above is the unscaled 
version of a member of a family of energies \textit{\`a la} Weiss  
introduced in \cite{Garofalo-Petrosyan}, namely the one corresponding
to the lowest frequency.
Since we also need to consider its rescaled version, we shift to the notation
used in \cite{Garofalo-Petrosyan}: $\GGG(u) = W_{\sfrac32}(1, u)$ with
\[
W_{\sfrac32}^{x_0} (r, u) := \frac{1}{r^{n+1}} \int_{B_r(x_0)}|\nabla u|^2\,dx - 
\frac{3}{2\,r^{n+2}}\int_{\de B_r(x_0)} u^2\, d\cHn.
\]
Note that by the monotonicity result in \cite{Garofalo-Petrosyan} it follows that
\begin{equation}\label{e:W'_0}
\frac{d}{dr} W_{\sfrac32} (r, u)  =
\frac{2}{r} \int_{\de B_1} \left(\nabla u_r \cdot \nu - \frac32 u_r\right)^2 d\cHn. 
\end{equation}
and for every $x_0 \in \Gamma_{\sfrac32}(u)$ it holds that
$W_{\sfrac32}^{x_0} (r, u) \geq 0$ for every $0<r< 1 - |x_0|$ with
\[
\lim_{r\to 0} W_{\sfrac32}^{x_0} (r, u) = 0.
\]

In this subsection we show the main consequence of the epiperimetric inequality
in Theorem~\ref{t:epithin32}, namely the decay of the boundary adjusted energy.
The next lemma enable us to apply the epiperimetric inequality uniformly
at any free boundary point in $\Gamma_{\sfrac32}(u)$.

\begin{lemma}\label{l:cpt}
Let $u\in H^1(B_1)$ be a solution to the Signorini problem.
Then for every compact set $K \subset \subset \Bn$ and
for every $\eta>0$ there exists $r_0 >0$ such that
\begin{equation}\label{e:smallness}
\dist(c^{x_0}_r, \mathscr{H}_{\sfrac32}) \leq \eta
\quad \forall\; x_0 \in \Gamma_{\sfrac32}(u) \cap K,
\quad\forall \; 0<r<r_0,
\end{equation}
where $c^{x_0}_r \in H^1(B_1)$ is given by
\[
c^{x_0}_r(x) := \frac{|x|^{\sfrac32}}{r^{\sfrac32}} u\Big(\frac{rx}{|x|}+x_0\Big).
\]
\end{lemma}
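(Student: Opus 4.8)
The plan is to argue by contradiction, using the compactness of blow-ups together with the Weiss monotonicity formula \eqref{e:W'_0}. Suppose \eqref{e:smallness} fails: then there are $\eta>0$, points $x_k\in\Gamma_{\sfrac32}(u)\cap K$ and radii $r_k\downarrow 0$ with $\dist(c^{x_k}_{r_k},\mathscr{H}_{\sfrac32})>\eta$ for all $k$. Since $K$ is compact and $\Gamma(u)$ is relatively closed, after passing to a subsequence $x_k\to x_\infty\in\Gamma(u)\cap K$. Introduce the rescalings $v_k(y):=r_k^{-\sfrac32}\,u(x_k+r_k y)$, and note that $c^{x_k}_{r_k}$ is exactly the $\sfrac32$-homogeneous extension of the trace $v_k\vert_{\de B_1}$.

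The first step is a uniform $C^{1,\sfrac12}$ bound on $(v_k)_{k\in\N}$. Since $u$ solves the Signorini problem and $K\subset\subset\Bn$, the Athanasopoulos--Caffarelli--Salsa regularity theory \cite{Athan-Caff-Sal} provides $\rho>0$ and $C>0$ with $\|u\|_{C^{1,\sfrac12}(\overline{B^+_\rho(x_0)})}\le C$ for every $x_0\in K$; moreover $\nabla u(x_0)=0$ at every free boundary point (tangentially because $u\vert_{\Bn}$ attains its minimum $0$ there, and $\de_{x_n}u(\cdot,0^+)=0$ on $\{u(\cdot,0)>0\}\supseteq\Gamma(u)$ by the complementarity condition together with the $C^1$ regularity up to $\Bn$). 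Rescaling, for every fixed $R>0$ the quantities $\|v_k\|_{C^{1,\sfrac12}(\overline{B^+_R})}$ are bounded for all $k$ large, and by even symmetry the same holds on $\overline{B^-_R}$. Hence, up to a further subsequence, $v_k\to v_\infty$ in $C^1_\loc$ on each of the two closed half-spaces, $v_\infty$ is a global solution of the Signorini problem (even in $x_n$, nonnegative on $\Bn$), and the traces $v_k\vert_{\de B_1}$ converge to $v_\infty\vert_{\de B_1}$ in $C^1$ of each closed hemisphere of $\de B_1$.

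It remains to identify $v_\infty$ and reach the contradiction. A change of variables gives $W_{\sfrac32}^{x_k}(s\,r_k,u)=W_{\sfrac32}(s,v_k)$ for every $s>0$ and $k$ large. Since $W_{\sfrac32}^{x_k}(\cdot,u)$ is nonnegative and nondecreasing, for every fixed $\sigma>0$ we get $\limsup_k W_{\sfrac32}^{x_k}(s\,r_k,u)\le\limsup_k W_{\sfrac32}^{x_k}(\sigma,u)=W_{\sfrac32}^{x_\infty}(\sigma,u)$; on the other hand, writing $W_{\sfrac32}^{x_\infty}(r,u)=\frac{H^{x_\infty}(r)}{r^{n+2}}\big(N^{x_\infty}(r,u)-\tfrac32\big)$ and using $\frac{d}{dr}\log\frac{H^{x_\infty}(r)}{r^{n+2}}=\frac2r\big(N^{x_\infty}(r,u)-\tfrac32\big)\ge0$, one checks that $W_{\sfrac32}^{x_\infty}(0^+,u)=0$ (the factor $\frac{H^{x_\infty}(r)}{r^{n+2}}$ is bounded and tends to $0$ as soon as $N^{x_\infty}(0^+,u)>\tfrac32$). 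Letting $\sigma\downarrow0$ yields $W_{\sfrac32}^{x_k}(s\,r_k,u)\to0$ for every fixed $s>0$, whence $W_{\sfrac32}(s,v_\infty)=\lim_k W_{\sfrac32}(s,v_k)=0$ for all $s>0$ by the $C^1_\loc$ convergence. As $v_\infty$ is a global Signorini solution, \eqref{e:W'_0} then forces $\nabla v_\infty\cdot x=\tfrac32 v_\infty$, i.e.\ $v_\infty$ is $\sfrac32$-homogeneous, and the classification of $\sfrac32$-homogeneous global solutions in \cite[Theorem~3]{Athan-Caff-Sal} gives $v_\infty\in\mathscr{H}_{\sfrac32}$. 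Being homogeneous, $v_\infty$ coincides with the $\sfrac32$-homogeneous extension of $v_\infty\vert_{\de B_1}$, so $c^{x_k}_{r_k}-v_\infty$ is the $\sfrac32$-homogeneous extension of $(v_k-v_\infty)\vert_{\de B_1}$; the explicit formula for the Dirichlet energy of a homogeneous function bounds $\|c^{x_k}_{r_k}-v_\infty\|_{H^1(B_1)}$ by a dimensional constant times the $C^1$ norm of $(v_k-v_\infty)\vert_{\de B_1}$ on each closed hemisphere, which tends to $0$. Then $\dist(c^{x_k}_{r_k},\mathscr{H}_{\sfrac32})\to0$, contradicting $\dist(c^{x_k}_{r_k},\mathscr{H}_{\sfrac32})>\eta$. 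The main obstacle is the uniform $C^{1,\sfrac12}$ estimate and the resulting $C^1$ convergence of the boundary traces, which is what upgrades weak $H^1$ compactness of the rescalings to $H^1(B_1)$ convergence of their homogeneous extensions; a secondary point is the verification that $W_{\sfrac32}^{x_\infty}(0^+,u)=0$ although $x_\infty$ need not belong to $\Gamma_{\sfrac32}(u)$.
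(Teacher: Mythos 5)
Your proof is correct, and its overall skeleton is the same as the paper's: argue by contradiction, use the uniform $C^{1,\sfrac12}$ estimates to extract a $C^1$-convergent subsequence of the rescalings, identify the limit as a $\sfrac32$-homogeneous global Signorini solution, invoke the classification of \cite[Theorem~3]{Athan-Caff-Sal}, and observe that $C^1$ convergence of the traces forces $H^1$ convergence of the homogeneous extensions $c^{x_k}_{r_k}$. The one genuinely different ingredient is how you prove homogeneity of the blow-up: the paper works with Almgren's frequency, writing $N(s,u^{x_k}_{r_k})=N^{x_k}(s r_k,u)\le N^{x_k}(\rho,u)$ and passing to the limit, which requires the limit point of the $x_k$'s to lie in $\Gamma_{\sfrac32}(u)$ (asserted there ``without loss of generality'', although $\Gamma_{\sfrac32}$ is only relatively open, so a priori $x_\infty$ could have higher frequency). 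You instead use the Weiss energy, the scaling identity $W_{\sfrac32}^{x_k}(sr_k,u)=W_{\sfrac32}(s,v_k)$, its monotonicity and nonnegativity, and the observation that $W^{x_\infty}_{\sfrac32}(0^+,u)=0$ for \emph{every} free boundary point (since $W=\frac{H(r)}{r^{n+2}}(N(r)-\sfrac32)$ and either the second factor vanishes in the limit or the first decays like a positive power of $r$). This cleanly disposes of the case $x_\infty\notin\Gamma_{\sfrac32}(u)$ and is, in that respect, a more careful argument than the one in the paper; the price is only that you must know the limit $v_\infty$ is a global solution before applying \eqref{e:W'_0}, which you do. Your derivation of the uniform $C^{1,\sfrac12}$ bound (from $\nabla u=0$ on $\Gamma(u)$ and the H\"older seminorm of $\nabla u$) also differs slightly from the paper's (which goes through the $L^2$ bound \eqref{e:H dall'alto} and the scaled optimal regularity estimate), but both are sound.
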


\begin{proof}
The proof is done by contradiction: we assume that there exist $\eta>0$
and sequences of points $x_k \in K \cap \Gamma_{\sfrac32}(u)$ and radii
$r_k \downarrow 0$ such that
\begin{equation}\label{e:smallness contradicted}
\dist(c^{x_k}_{r_k}, \mathscr{H}_{\sfrac32}) > \eta \quad \forall\; k\in \N.
\end{equation}
Let us introduce the following rescaled function (cp.~next subsection for further discussions)
\begin{equation*}
u^{x_0}_{r}(x) := \frac{u(x_0 + r x)}{r^{\sfrac32}} 
\quad \forall\; 0<r<1-|x_0|, 
\quad \forall \;x\in B_{\sfrac{(1-|x_0|)}{r}}.
\end{equation*}
It follows from \eqref{e:H dall'alto} that
\[
\sup_{k \in \N} \|u^{x_k}_{r_k}\|_{L^2(B_1)} < +\infty,
\]
from which (by the regularity for the solution of the Signorini problem --
cp.~Appendix~\ref{a:frequency}) we deduce
\begin{equation}\label{e:uniform reg}
\sup_{k \in \N} \|u^{x_k}_{r_k}\|_{C^{1,\sfrac12}(B_1)} < +\infty.
\end{equation}
We can then conclude that up to passing to a subsequence (not relabeled)
there exists a function $w_0 \in C^{1,\sfrac12}(B_1)$ such that
$\|u^{x_k}_{r_k}-w_0\|_{C^{1,\alpha}(B_1)} \to 0$ for every $\alpha <\sfrac12$.
Moreover without loss of generality we can also assume that $x_k \to x_0 \in K \cap \Gamma_{\sfrac32}$.

By a simple argument (whose details are left to the reader) the function
$w_0$ is itself a solution to the Signorini problem. Moreover we claim that $w_0$
is $\sfrac32$-homogeneous. In order to show the claim we start noticing the following:
for every $\delta>0$ we can fix $\rho>0$ such that $N^{x_0}(\rho, u) \leq \sfrac32 + \delta$.
Therefore, for $k$ sufficiently large we infer that for every $s \in (0,1)$:
\begin{align}
N(s,u^{x_k}_{r_k}) & = N^{x_k}(s\,r_k,u)\notag\\
&= N^{x_k}(s\,r_k,u) - N^{x_k}(\rho,u)  + N^{x_k}(\rho,u)-N^{x_0}(\rho,u)+N^{x_0}(\rho,u)\notag\\
& \leq N^{x_k}(\rho,u)-N^{x_0}(\rho,u) + \frac32 +\delta\leq \frac32 +2\,\delta,
\end{align}
where we used the monotonicity of the frequency and the fact that for $k$ large enough
$|N^{x_k}(\rho,u)-N^{x_0}(\rho,u)| \to 0$ as $x_k \to x_0$.
In particular from the convergence of $u^{x_k}_{r_k}$ to $w_0$ and the arbitrariness
of $\delta$ we deduce that $N(s, w_0) \equiv \sfrac32$, i.e.~$w_0$ is $\sfrac32$-homogeneous.

From the already cited classification result in \cite[Theorem~3]{Athan-Caff-Sal} we infer that
$w_0 \in \mathscr{H}_{\sfrac32}$, thus clearly contradicting \eqref{e:smallness contradicted}.
\end{proof}

We are now in the position to prove the decay
of the boundary adjusted energy. To this aim
we recall some elementary formulas, whose verification is left to the 
reader
(details can be also found in \cite{Garofalo-Petrosyan}):
\begin{gather}
H'(r) := \frac{d}{dr} H(r) = \frac{n-1}{r} H(r) + 2\int_{\de B_r} u \nabla u\cdot \nu\,d\cHn,\label{e:H'}\\
D'(r) := \frac{d}{dr} D(r) = \frac{n-2}{r} D(r) + 2\int_{\de B_r} (\nabla u\cdot \nu)^2\,d\cHn,\label{e:D'}\\
D(r) = \int_{\de B_r} u\nabla u\cdot \nu\,d\cHn.\label{e:D}
\end{gather}
We mention once for all that all the integral quantities $D(r)$, $H(r)$ etc{\ldots}
considered in this section are absolutely continuous functions of the radius
and therefore can be differentiated almost everywhere.

\begin{proposition}\label{p:decay energy}
There exists a dimensional constant $\gamma>0$ with this property.
For every compact set $K \subset \Bn$ there exists a constant $C>0$
such that
\begin{equation}\label{e:decay energy}
W^{x_0}_{\sfrac32} (r, u) \leq C\, r^\gamma\quad\forall\;0<r<\dist(K, \de B_1), \; \forall \; x_0 \in 
\Gamma_{\sfrac32}\cap K.
\end{equation}
\end{proposition}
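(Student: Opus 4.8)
The plan is to run Weiss's homogeneity-improvement argument: one combines the epiperimetric inequality (Theorem~\ref{t:epithin32}) with an elementary differential identity for $r\mapsto W^{x_0}_{\sfrac32}(r,u)$ to produce a differential inequality forcing power decay.

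\emph{Set-up.} Fix $x_0\in\Gamma_{\sfrac32}\cap K$ and abbreviate $W(r):=W^{x_0}_{\sfrac32}(r,u)$, $D(r):=D^{x_0}(r)$, $H(r):=H^{x_0}(r)$, $u_r:=u^{x_0}_r$, $c_r:=c^{x_0}_r$, and $\GGG:=\GGG_{\sfrac32}$. By construction $c_r$ is $\sfrac32$-homogeneous, even in $x_n$, $c_r\geq0$ on $\Bn$, and $c_r=u_r$ on $\de B_1$; moreover $u_r$ solves the Signorini problem in $B_1$ (by the scale invariance of the local minimality of $u$), so $u_r\in\mathscr{A}_{c_r}$ and, since $\GGG(v_1)-\GGG(v_2)=2(\EEE(v_1)-\EEE(v_2))$ on $\mathscr{A}_{c_r}$, the function $u_r$ minimizes $\GGG$ over $\mathscr{A}_{c_r}$, with $\GGG(u_r)=D(r)\,r^{-n-1}-\tfrac32H(r)\,r^{-n-2}=W(r)$. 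Lemma~\ref{l:cpt}, applied with $\eta=\delta$ (the dimensional constant of Theorem~\ref{t:epithin32}), yields $r_0=r_0(K)>0$ such that $\dist_{H^1}(c_r,\mathscr{H}_{\sfrac32})\leq\delta$ for all $x_0\in\Gamma_{\sfrac32}\cap K$ and $0<r<r_0$; hence Theorem~\ref{t:epithin32} gives
\begin{equation*}
W(r)=\GGG(u_r)=\inf_{v\in\mathscr{A}_{c_r}}\GGG(v)\leq(1-\kappa)\,\GGG(c_r)\qquad\forall\;0<r<r_0 .
\end{equation*}

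\emph{The key identity.} The computational heart is
\begin{equation*}
\GGG(c_r)=W(r)+\frac{r}{2(n+1)}\,W'(r),\qquad 0<r<\dist(x_0,\de B_1),
\end{equation*}
which I would derive from \eqref{e:H'}, \eqref{e:D'}, \eqref{e:D}, \eqref{e:W'_0}. Writing $d(r):=D(r)\,r^{-n-1}$, $h(r):=H(r)\,r^{-n-2}$, one has $W=d-\tfrac32h$, and differentiating with the help of \eqref{e:H'}, \eqref{e:D'}, \eqref{e:D} gives $h'(r)=\tfrac2r W(r)$ and $d'(r)=W'(r)+\tfrac3r W(r)$. On $\de B_1$ the identity $\de_\nu u_r-\tfrac32u_r=r\,\tfrac{d}{dr}u_r$ holds, which together with \eqref{e:W'_0} and \eqref{e:D} gives $\int_{\de B_1}(\de_\nu u_r)^2=\tfrac r2W'(r)+3d(r)-\tfrac94h(r)$, while $\int_{\de B_1}|\nabla u_r|^2=D'(r)\,r^{-n}=r\,d'(r)+(n+1)d(r)$. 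Inserting these, together with the standard formula $\int_{B_1}|\nabla c_r|^2=\tfrac{1}{n+1}\int_{\de B_1}\big(|\nabla u_r|^2-(\de_\nu u_r)^2+\tfrac94u_r^2\big)$ for the Dirichlet energy of the $\sfrac32$-homogeneous extension of $u_r\vert_{\de B_1}$, into $\GGG(c_r)=\int_{B_1}|\nabla c_r|^2-\tfrac32h(r)$ and simplifying yields the identity. Bookkeeping the scaling exponents correctly here is the one genuinely delicate point; the rest of the argument is soft.

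\emph{Conclusion.} Combining the two displays above, for $0<r<r_0$,
\begin{equation*}
W(r)\leq(1-\kappa)\Big(W(r)+\frac{r}{2(n+1)}W'(r)\Big),\qquad\text{i.e.}\qquad \kappa\,W(r)\leq\frac{(1-\kappa)\,r}{2(n+1)}\,W'(r) .
\end{equation*}
Since $W\geq0$ is nondecreasing with $W(0^+)=0$, on the relatively open set $\{W>0\}=(r_*,r_0)$ this reads $(\log W)'(r)\geq\gamma/r$ with the dimensional constant $\gamma:=\dfrac{2(n+1)\kappa}{1-\kappa}>0$; integrating from $r$ to $r_0$ gives $W(r)\leq W(r_0)(r/r_0)^{\gamma}$ there, and for $r\leq r_*$ the bound is trivial as $W(r)=0$. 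Finally, for $r_0\leq r<\dist(K,\de B_1)$ one estimates $W(r)\leq d(r)\leq D(r)\,r_0^{-n-1}\leq\|\nabla u\|_{L^2(B_1)}^2\,r_0^{-n-1}$. Altogether $W(r)\leq C\,r^{\gamma}$ for all $0<r<\dist(K,\de B_1)$, with $C:=\|\nabla u\|_{L^2(B_1)}^2\,r_0^{-(n+1+\gamma)}$ depending only on $K$ and $u$, which is the assertion. The main obstacle is, as indicated, the verification of the identity $\GGG(c_r)=W(r)+\tfrac{r}{2(n+1)}W'(r)$; once it is available, the conclusion is a one-line ODE argument fed by the uniform smallness from Lemma~\ref{l:cpt}.
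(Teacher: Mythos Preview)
Your proof is correct and follows essentially the same approach as the paper. The key identity you state, $\GGG(c_r)=W(r)+\tfrac{r}{2(n+1)}W'(r)$, is exactly what the paper obtains by combining its formula \eqref{e:W'_2} with \eqref{e:W'_0}; the paper merely arrives at it by first expanding $W'(r)$ and recognizing the tangential piece $\int_{\de B_1}(|\nabla_\theta u_r|^2+\tfrac94 u_r^2)=(n+1)\int_{B_1}|\nabla c_r|^2$, whereas you compute $\int_{B_1}|\nabla c_r|^2$ directly and substitute---but the algebra and the resulting differential inequality (with the same $\gamma=\tfrac{2(n+1)\kappa}{1-\kappa}$) are identical.
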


\begin{proof}
We start considering the case of $x_0 = 0 \in \Gamma_{\sfrac32}$.
We can then compute as follows:
\begin{align}
\frac{d}{dr} W_{\sfrac32} (r, u) & = -\frac{n+1}{r^{n+2}} D(r) + \frac{D'(r)}{r^{n+2}}
-\frac{3}{2r^{n+2}}H'(r)+\frac{3(n+2)}{2r^{n+3}}H(r)\notag\\
&= -\frac{n+1}{r^{}}  W_{\sfrac32} (r, u) - \frac{3(n+1)}{2r^{n+3}}H(r)\notag\\
& \quad\quad + \underbrace{\frac{D'(r)}{r^{n+2}} + \frac{9}{2r^{n+3}} H(r) - 3 \frac{D(r)}{r^{n+2}}}_{=:I}.\label{e:W'_1}
\end{align}
In order to treat the terms in $I$, we introduce the rescaled functions
\begin{equation}\label{e:u_r}
u_r(x) := \frac{u(rx)}{r^{\sfrac32}}
\end{equation}
and deduce by simple computations that
\begin{align}
I & = \frac{1}{r} \int_{\de B_1} \left(|\nabla u_r|^2 - 3 u_r \nabla u_r \cdot \nu + \frac{9}{2}u_r^2 \right) d\cHn \notag\\
& =\frac{1}{r} \int_{\de B_1} \left[\left(\nabla u_r \cdot \nu - \frac32 u_r\right)^2 
+ |\nabla_\theta u_r|^2 + \frac{9}{4} u_r^2 \right] d\cHn,\label{e:I}
\end{align}
where we denoted by $\nabla_\theta u_r$ the differential of $u_r$ in
the directions tangent to $\de B_1$.
Let $c_r$ be the $\sfrac32$-homogeneous extension of $u_r\vert_{\de B_1}$, i.e.
\[
c_r (x):= |x|^{\sfrac32} u_r\Big(\frac{x}{|x|}\Big).
\] 
It is simple to verify that
\[
\int_{\de B_1} \left(|\nabla_\theta u_r|^2 + \frac{9}{4} u_r^2 \right) = (n+1)\int_{B_1}|\nabla c_r|^2\,dx.
\]
We then conclude that
\begin{align}
\frac{d}{dr} W_{\sfrac32} (r, u) & = \frac{n+1}{r^{}} \Big( W_{\sfrac32} (1, c_r)  - W_{\sfrac32} (1, u_r) \Big) \notag\\
&\qquad+ 
\frac{1}{r} \int_{\de B_1} \left(\nabla u_r \cdot \nu - \frac32 u_r\right)^2 d\cHn\label{e:W'_2}.
\end{align}
By \eqref{e:smallness} there exists $r_0>0$ such that 
the epiperimetric inequality in Theorem~\ref{t:epithin32} can
be applied (recall that $\GGG(w) = W_{\sfrac32}(1,w)$
for every $w \in H^1(B_1)$). In view of \eqref{e:W'_0}, we then deduce that
\begin{align}
\frac{d}{dr} W_{\sfrac32} (r, u) & \geq 2\frac{n+1}{r^{}}\frac{\kappa}{1-\kappa} W_{\sfrac32} (1, u_r) \quad\forall\; 0<r<r_0.\label{e:W'_3}
\end{align}
Integrating this inequality we get
\begin{equation}\label{e:decay W}
W_{\sfrac32} (r, u) \leq W_{\sfrac32} (1, u)\, r^\gamma\quad\forall\;0<r<r_0,
\end{equation}
with $\gamma := 2(n+1) \sfrac{\kappa}{1-\kappa}$.

In order to conclude the proof it is enough to observe that for every 
$x_0 \in K\subset\subset \Bn$
the decay \eqref{e:decay W} can be derived by the same arguments
with a constant $C>0$ which depends only on $D(1)$ and $\dist(K, \de B_1)$.
\end{proof}

\begin{remark}
Note that \eqref{e:W'_2} also shows the monotonicity of the boundary adjusted
energy: using the minimizing property for the Dirichlet energy
of $u_r$ with respect to functions with the same trace, we deduce indeed the less refined
with respect to \eqref{e:W'_0} estimate
\begin{align*}
\frac{d}{dr} W_{\sfrac32} (r, u)  
&\geq \frac{1}{r} \int_{\de B_1} \left(\nabla u_r \cdot \nu - \frac32 u_r\right)^2 d\cHn >0.
\end{align*}
\end{remark}

\subsection{Rescaled profiles}
In order to be able to use the computations above in the study of the property
of the free boundary, we need to consider the following limiting profiles of our
solution $u$: under the assumption that $0 \in \Gamma_{\sfrac32}(u)$ we set
\begin{equation}\label{e:u_r bis}
u_r(x) := \frac{u(rx)}{r^{\sfrac32}}
\end{equation}
Note that the rescalings in \eqref{e:u_r bis} are different from those
considered in the literature, e.g. in \cite{Athan-Caff-Sal}.

In order to deduce the existence of nontrivial blowups under the rescaling in
\eqref{e:u_r bis} we need to prove some growth estimates on the solution
to the Signorini problem.

A first consequence of \eqref{e:H dall'alto} is that the rescaled profiles $u_r$ have
equi-bounded Dirichlet energies:
\begin{align}
\int_{B_1} |\nabla u_r|^2 \, dx & = \frac{\int_{B_r}|\nabla u|^2\,dx}{r^{n+1}} =
\frac{r\int_{B_r}|\nabla u|^2\,dx}{\int_{\de B_r} u^2\,d\cHn} \, \frac{\int_{\de B_r} u^2\,d\cHn}{r^{n+1}}\notag\\
&\stackrel{\eqref{e:H dall'alto}}{\leq} N(r,u) \, H(1) \leq N(1,u) \, H(1).\label{e:equi energy} 
\end{align}
Therefore, for every infinitesimal sequence of radii $r_k \downarrow0$
there exists a subsequence $r_{k'}\downarrow 0$ such that
$u_{r_{k'}} \to u_0$ in $L^2(B_1)$.

Note however that \eqref{e:H dal basso} is not enough to deduce that there exists a limiting
function $u_0$ which is not identically $0$.
As an application of the epiperimetric inequality and the related decay of
the energy in Proposition~\ref{p:decay energy} we can deduce that this is actually the case
for every such limiting profiles $u_0$.

\begin{proposition}[(Nondegeneracy)]\label{p:nondegen}
Let $u \in H^1(B_1)$ be a solution to the Signorini problem
and assume that $0 \in \Gamma_{\sfrac32}(u)$. 
Then there exists a constant $H_0>0$ such that
\begin{equation}\label{e:nondegeneracy_1}
H(r) \geq H_0\, r^{n+2} \quad\forall \; 0<r<1.
\end{equation}
\end{proposition}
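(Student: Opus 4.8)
The plan is to convert the energy decay of Proposition~\ref{p:decay energy} into the desired lower growth of $H$ by studying the logarithmic derivative of the monotone quotient $r\mapsto H(r)/r^{n+2}$. Throughout we take $x_0=0$ and write $D=D^0$, $H=H^0$, $N=N^0$. Since $0\in\Gamma_{\sfrac32}(u)$ is a free boundary point, $u$ cannot vanish identically on any ball $B_r$ with $r<1$, so $H(r)>0$ for all $r\in(0,1)$ and $\phi(r):=\log\bigl(H(r)/r^{n+2}\bigr)$ is well defined. Combining \eqref{e:H'} and \eqref{e:D} gives $H'(r)=\frac{n-1}{r}H(r)+2D(r)$, hence
\[
\phi'(r)=\frac{H'(r)}{H(r)}-\frac{n+2}{r}=\frac{2D(r)}{H(r)}-\frac{3}{r}=\frac{2}{r}\bigl(N(r,u)-\sfrac32\bigr),
\]
while directly from the definitions $W_{\sfrac32}(r,u)=\frac{H(r)}{r^{n+2}}\bigl(N(r,u)-\sfrac32\bigr)$, so that
\[
\phi'(r)=\frac{2\,r^{n+1}}{H(r)}\,W_{\sfrac32}(r,u)\ge 0,
\]
the nonnegativity being exactly the monotonicity of $r\mapsto H(r)/r^{n+2}$ recalled in \eqref{e:H dall'alto}.

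The key step is to bound $\phi'$ by an integrable function near the origin. By Proposition~\ref{p:decay energy}, applied with the compact set $K=\{0\}\subset\Bn$, there are $C_1>0$ and a dimensional $\gamma>0$ with $W_{\sfrac32}(r,u)\le C_1\,r^\gamma$ for all $0<r<1$. The crucial observation is that the a priori (too weak) lower bound \eqref{e:H dal basso} holds for \emph{every} $\eps>0$: choosing $\eps=\gamma/2$ there, we get $r_1\in(0,1)$ and $c_1>0$ with $H(r)\ge c_1\,r^{\,n+2+\gamma/2}$ for $0<r<r_1$. Inserting both bounds into the identity above yields, for $0<r<r_1$,
\[
0\le\phi'(r)\le\frac{2\,r^{n+1}\,C_1\,r^{\gamma}}{c_1\,r^{\,n+2+\gamma/2}}=\frac{2C_1}{c_1}\,r^{\,\gamma/2-1},
\]
and $r^{\gamma/2-1}$ is integrable on $(0,r_1)$ since $\gamma/2-1>-1$.

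Finally I would integrate from $r$ to $r_1$: for $0<r<r_1$,
\[
\phi(r_1)-\phi(r)=\int_r^{r_1}\phi'(s)\,ds\le\frac{2C_1}{c_1}\int_0^{r_1}s^{\,\gamma/2-1}\,ds=\frac{4C_1}{c_1\,\gamma}\,r_1^{\,\gamma/2}=:C_2<+\infty,
\]
whence $H(r)/r^{n+2}\ge e^{\phi(r_1)-C_2}=:H_0>0$ for all $0<r\le r_1$ (the endpoint by continuity), and for $r_1\le r<1$ the same inequality follows from the monotonicity of $r\mapsto H(r)/r^{n+2}$. This proves \eqref{e:nondegeneracy_1}. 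I do not expect a genuine obstacle here beyond correctly assembling the pieces; the single point that matters is to use \eqref{e:H dal basso} with $\eps$ arbitrarily small, so that the true decay exponent $\gamma$ of $W_{\sfrac32}$ beats the crude exponent $\eps$ of the preliminary lower bound on $H$ and makes $\phi'$ integrable near $0$.
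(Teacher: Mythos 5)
Your argument is correct and follows essentially the same route as the paper: differentiate $\log\bigl(H(r)/r^{n+2}\bigr)$, rewrite the derivative as $\frac{2r^{n+1}}{H(r)}W_{\sfrac32}(r,u)$, and combine the decay $W_{\sfrac32}(r,u)\le C r^{\gamma}$ from Proposition~\ref{p:decay energy} with the preliminary bound \eqref{e:H dal basso} for $\eps=\sfrac{\gamma}{2}$ to get an integrable majorant $Cr^{\sfrac{\gamma}{2}-1}$, then integrate and invoke the monotonicity of $H(r)/r^{n+2}$. The only cosmetic difference is that you integrate the differential inequality directly between $r$ and $r_1$, whereas the paper phrases it as the monotonicity of $H(r)/\bigl(r^{n+2}e^{\frac{2C}{\gamma}r^{\sfrac{\gamma}{2}}}\bigr)$ and passes to the limit $r\to0$; the two are equivalent.
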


\begin{proof}
The starting point is the computation of the derivative in \eqref{e:log H}:
\begin{align}\label{e:log H bis}
\frac{d}{dr} \left(\log \frac{H(r)}{r^{n+2}} \right) 
= 2\frac{D(r)}{H(r)} - \frac{3}{r} = \frac{2\,r^{n+1}}{H(r)} W_{\sfrac32}(r,u).
\end{align}
Let $\gamma>0$ be the constant in Proposition~\ref{p:decay energy}
and let $\eps = \sfrac{\gamma}{2}$ in Lemma~\ref{l:altezza}.
Then by using \eqref{e:decay energy} and \eqref{e:H dal basso} in conjunction
we infer from \eqref{e:log H bis} that there exists a constant $C=C(\gamma)>0$ such that
\begin{align}
\frac{d}{dr} \left(\log \frac{H(r)}{r^{n+2}} \right) \leq C\,r^{\sfrac{\gamma}{2} - 1} \quad\forall\; 0<r<r_0,
\end{align}
where $r_0 = r_0(\eps)>0$ is the constant given by Lemma~\ref{l:altezza}.
Integrating this differential inequality we get that the function
\[
\frac{H(r)}{r^{n+2} e^{\frac{2C}{\gamma} r^{\sfrac{\gamma}{2}}}}
\]
is nonincreasing.
In particular we conclude that
\begin{gather}
\lim_{r\to0} \frac{H(r)}{r^{n+2} e^{Cr^{\sfrac{\gamma}{2}}}} = \lim_{r\to0}\frac{H(r)}{r^{n+2}} =: H_0
\end{gather}
and for sufficiently small $r>0$
\begin{gather}
H_0 \geq
\frac{H(r)}{r^{n+2}e^{\frac{2C}{\gamma} r^{\sfrac{\gamma}{2}}}} \geq
\frac{H(r)}{r^{n+2}} (1-\frac{2C}{\gamma} r^{\sfrac{\gamma}{2}}) >0
\end{gather}
where we used the elementary inequality $e^x \leq \frac{1}{1-x}$ for $x>0$ sufficiently small.
By the monotonicity of the function $\frac{H(r)}{r^{n+2}}$ proven in 
Lemma~\ref{l:altezza}
we then conclude.
\end{proof}

Note now that by \eqref{e:nondegeneracy_1} we then deduce that
\[
\int_{\de B_1} u_r^2\,d\cHn \geq H_0.
\]
Therefore, since from \eqref{e:equi energy} we also deduce the convergence of the traces of $u_r$ on $\de B_1$,
we finally get that
\[
\int_{\de B_1} u_0^2\,d\cHn \geq H_0>0
\]
for every limiting profile $u_0$, thus showing that $u_0 \not\equiv 0$.

\subsection{Uniqueness of blowups}
By compactness for every $x_0 \in \Gamma_{\sfrac32}(u)$ and
for every infinitesimal sequence $r_k\downarrow 0$ there exists
at least a subsequence (in the sequel not relabeled) such that
$u^{x_0}_{r_k} \to u^{x_0}_0$ in the weak topology of $H^1(B_1)$
for some $u^{x_0}_0\in H^1(B_1)$ nontrivial function.
These limiting functions $u^{x_0}_0$ are called in the sequel \textit{blowups}
of $u$ at the point $x_0$

It is very simple to show that $u^{x_0}_0$ are solutions to the Signorini problem.
Moreover $u^{x_0}_0$ is $\sfrac32$-homogeneous, 
i.e.~$x\cdot\nabla u^{x_0}_0 - \sfrac{3 u^{x_0}_0}{2} \equiv 0$.
Therefore, from the classification result in \cite[Theorem~3]{Athan-Caff-Sal}
we infer that $u^{x_0}_0 \in \mathscr{H}_{\sfrac32}$.

A key ingredient of the analysis of the free boundary we are going to perform
is to show that
\begin{itemize}
\item[(i)] the blowup $u_0$ to a solution of the Signorini problem
is actually \textit{unique}, meaning that the \textit{whole} sequence $u_r \to u_0$
in $L^2(B_1)$ as $r \to 0$,
\item[(ii)] there is a rate of convergence of the rescaled profiles to unique limiting blowup.
\end{itemize}

This is now an easy consequence of the epiperimetric inequality and it is shown in the next proposition.

\begin{proposition}\label{p:uniqueness blowup reg}
Let $u$ be a solution to the Signorini problem and $K \subset\subset \Bn$.
Then there exist a constant $C>0$ such
that for every $x_0\in \Gamma_{\sfrac32}(u)\cap K$
\begin{equation}\label{e:unique trace}
\int_{\de B_1} \left |u^{x_0}_{r} -u^{x_0}_0\right| \,d\cHn
\leq C\, r^{\sfrac{\gamma}{2}} \quad \textrm{for all }\; 0<r<\dist(K,\de B_1),
\end{equation}
where $\gamma>0$ is the constant in Proposition~\ref{p:decay energy}.
In particular, the blow-up limit $u_0$ at $x_0$ is unique.
\end{proposition}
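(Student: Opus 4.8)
The plan is to convert the energy decay of Proposition~\ref{p:decay energy} into a quantitative rate for the traces $u^{x_0}_r\vert_{\de B_1}$ by means of the monotonicity identity \eqref{e:W'_0}; uniqueness of the blow-up will fall out of the resulting Cauchy estimate. Fix $x_0\in\Gamma_{\sfrac32}(u)\cap K$ and set $\rho_0:=\dist(K,\de B_1)$. Since $u\in C^{1,\sfrac12}$, the map $r\mapsto u^{x_0}_r\vert_{\de B_1}$ is a $C^1$ curve in $L^2(\de B_1)$ on $(0,\rho_0)$ with
\[
\frac{d}{dr}u^{x_0}_r(\omega)=\frac1r\Big(\nabla u^{x_0}_r(\omega)\cdot\nu-\tfrac32\,u^{x_0}_r(\omega)\Big),\qquad\omega\in\de B_1 .
\]
Because \eqref{e:W'_0} holds verbatim with base point $x_0$ in place of the origin (it is the $\lambda=\sfrac32$ instance of \eqref{e:W} after rescaling), this gives
\[
\Big\|\tfrac{d}{dr}u^{x_0}_r\Big\|_{L^2(\de B_1)}^2=\frac1{r^2}\int_{\de B_1}\Big(\nabla u^{x_0}_r\cdot\nu-\tfrac32\,u^{x_0}_r\Big)^2d\cHn=\frac1{2r}\,\frac{d}{dr}W^{x_0}_{\sfrac32}(r,u).
\]

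\textbf{Main step.} Write $g(\rho):=W^{x_0}_{\sfrac32}(\rho,u)$, so that $g\ge0$, $g$ is non-decreasing, and $g(\rho)\le C\rho^{\gamma}$ on $(0,\rho_0)$ by Proposition~\ref{p:decay energy}, with $C$ uniform over $\Gamma_{\sfrac32}(u)\cap K$. The estimate \eqref{e:W'_3}, combined with the scale invariance $W^{x_0}_{\sfrac32}(1,u^{x_0}_\rho)=W^{x_0}_{\sfrac32}(\rho,u)$ and the definition $\gamma=2(n+1)\sfrac{\kappa}{1-\kappa}$, reads $g'(\rho)\ge\tfrac{\gamma}{\rho}\,g(\rho)$ for $0<\rho<r_0$, where $r_0>0$ is the (uniform) radius furnished by Lemma~\ref{l:cpt} applied with $\eta=\delta$, $\delta$ the threshold in Theorem~\ref{t:epithin32}. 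Discarding the trivial case in which $g\equiv0$ near $0$ (then $u^{x_0}_r$ is independent of $r$ for small $r$), we may assume $g>0$ on $(0,r_0)$, and then the elementary bound
\[
\frac{1}{\sqrt{2\rho}}\,\sqrt{g'(\rho)}\ \le\ \frac1{\sqrt{2\gamma}}\,\frac{g'(\rho)}{\sqrt{g(\rho)}}\ =\ \sqrt{\tfrac2\gamma}\,\frac{d}{d\rho}\sqrt{g(\rho)}
\]
holds (square it and use $g'(\rho)\ge\tfrac\gamma\rho g(\rho)$). Integrating, for $0<s<t<r_0$ we get
\[
\big\|u^{x_0}_t-u^{x_0}_s\big\|_{L^2(\de B_1)}\ \le\ \int_s^t\Big\|\tfrac{d}{d\rho}u^{x_0}_\rho\Big\|_{L^2(\de B_1)}\,d\rho\ \le\ \sqrt{\tfrac2\gamma}\,\sqrt{g(t)}\ \le\ \sqrt{\tfrac{2C}{\gamma}}\;t^{\gamma/2}.
\]
Hence $\{u^{x_0}_r\vert_{\de B_1}\}$ is Cauchy in $L^2(\de B_1)$ as $r\downarrow0$, so its limit is unique; and since every subsequential weak-$H^1$ blow-up is $\sfrac32$-homogeneous, hence determined by its trace, the blow-up $u^{x_0}_0$ is unique, with $\|u^{x_0}_r-u^{x_0}_0\|_{L^2(\de B_1)}\le\sqrt{2C/\gamma}\,r^{\gamma/2}$ for $0<r<r_0$.

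\textbf{Conclusion and remaining points.} Passing from $L^2(\de B_1)$ to $L^1(\de B_1)$ by Cauchy--Schwarz yields \eqref{e:unique trace} for $0<r<r_0$, with an extra factor $\cHn(\de B_1)^{1/2}$. For $r_0\le r<\rho_0$ one obtains \eqref{e:unique trace} by enlarging the constant: write $u^{x_0}_r-u^{x_0}_0=(u^{x_0}_r-u^{x_0}_{r_0/2})+(u^{x_0}_{r_0/2}-u^{x_0}_0)$, the second term being already controlled, and for the first term use the plain monotonicity \eqref{e:W'_0} together with Cauchy--Schwarz to bound $\|u^{x_0}_r-u^{x_0}_{r_0/2}\|_{L^2(\de B_1)}\le(\tfrac12\log\tfrac{2\rho_0}{r_0})^{1/2}\,W^{x_0}_{\sfrac32}(\rho_0,u)^{1/2}$, while $W^{x_0}_{\sfrac32}(\rho_0,u)\le\rho_0^{-(n+1)}\int_{B_1}|\nabla u|^2$ since $B_{\rho_0}(x_0)\subset B_1$; on this range $r^{\gamma/2}\ge r_0^{\gamma/2}$, so the constant can be absorbed. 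All constants depend only on $u$ and $K$ (through the constant of Proposition~\ref{p:decay energy}, the radius of Lemma~\ref{l:cpt}, and $\int_{B_1}|\nabla u|^2$), hence the estimate is uniform over $\Gamma_{\sfrac32}(u)\cap K$.

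\textbf{The main obstacle.} The delicate point is exactly the passage from the decay $W^{x_0}_{\sfrac32}\le Cr^{\gamma}$ to the trace estimate: one must realise that the speed of $r\mapsto u^{x_0}_r$ in $L^2(\de B_1)$ is measured precisely by $\sqrt{\tfrac1{2r}\,\tfrac{d}{dr}W^{x_0}_{\sfrac32}}$, so that the a priori oscillating derivative $(W^{x_0}_{\sfrac32})'$ controls everything, and then use the differential inequality $(W^{x_0}_{\sfrac32})'\ge\tfrac\gamma r\,W^{x_0}_{\sfrac32}$ coming from \eqref{e:W'_3} to replace the seemingly non-integrable $\sqrt{(W^{x_0}_{\sfrac32})'/r}$ by the integrable majorant $\sqrt{2/\gamma}\,\tfrac{d}{dr}\sqrt{W^{x_0}_{\sfrac32}}$; everything else is bookkeeping.
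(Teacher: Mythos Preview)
Your proof is correct, and the overall strategy coincides with the paper's: convert the energy decay of Proposition~\ref{p:decay energy} into a Cauchy estimate for the traces $u^{x_0}_r\vert_{\de B_1}$ via the monotonicity identity \eqref{e:W'_0}. The difference lies in the integration step. The paper bounds
\[
\int_s^r\Big\|\tfrac{d}{dt}u^{x_0}_t\Big\|_{L^2(\de B_1)}\,dt
\ =\ \int_s^r t^{-\sfrac12}\Big(\tfrac12\tfrac{d}{dt}W^{x_0}_{\sfrac32}(t,u)\Big)^{\sfrac12}dt
\]
by Cauchy--Schwarz in $t$, obtaining (up to a constant) $\sqrt{\log(r/s)}\cdot\sqrt{W^{x_0}_{\sfrac32}(r,u)-W^{x_0}_{\sfrac32}(s,u)}$, and then sums over the dyadic scales $s=r/2=2^{-k}$ to reach the rate $r^{\gamma/2}$. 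You instead feed the differential inequality \eqref{e:W'_3} back into the integrand, turning $\sqrt{g'(\rho)/(2\rho)}$ into the exact derivative $\sqrt{2/\gamma}\,\frac{d}{d\rho}\sqrt{g(\rho)}$, which integrates in one stroke to $\sqrt{2/\gamma}\,\sqrt{g(t)}\le\sqrt{2C/\gamma}\,t^{\gamma/2}$.

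Your route is tidier and bypasses the dyadic decomposition entirely, at the price of invoking the differential form \eqref{e:W'_3} of the epiperimetric estimate rather than only its integrated consequence $W^{x_0}_{\sfrac32}\le Cr^\gamma$. The paper's route, by contrast, would work for any monotone functional satisfying an identity of the shape \eqref{e:W'_0} together with a power decay, without ever needing the intermediate inequality \eqref{e:W'_3}; in that sense it is slightly more portable. Both arguments give the same exponent and uniform constants over $\Gamma_{\sfrac32}(u)\cap K$.
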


\begin{proof}
Arguing as in the proof of Proposition~\ref{p:decay energy}
it is enough to show \eqref{e:unique trace} for $0 \in \Gamma_{\sfrac32}(u)$ and for a constant $C>0$ which depend only on the $L^2$ norm of $u$ and
on its Dirichlet energy.

Let $0<s<r <r_0$ be fixed radii with $r_0$ the constant in \eqref{e:smallness}
such that the epiperimetric inequality can be applied.
We can then use the formula \eqref{e:W'_2} to compute as follows:
\begin{align}\label{e:diff tracce}
\int_{\de B_1} |u_r - u_s|\, d\cHn & \leq
\int_{\de B_1} \int_{s}^r t^{-1}
\Big|\nabla u_t \cdot \nu - \frac32 u_t \Big|\,dt\, d\cHn\notag\\
& \leq \sqrt{n\,\omega_n}\int_{s}^r t^{-\sfrac12}
\left(t^{-1}\int_{\de B_1}\Big|\nabla u_t \cdot \nu - \frac32 u_t \Big|^2\, d\cHn\right)^{\sfrac12}dt\notag\\
& \stackrel{\eqref{e:W'_2}}{\leq} \sqrt{n\,\omega_n}
\int_s^r t^{-\sfrac12} \Big(\frac{d}{dt} W_{\sfrac32}(t, u)\Big)^{\sfrac12}\,dt\notag\\
&\leq \sqrt{n\,\omega_n}\, \log\big(\frac{r}{s}\big)\, \big(W_{\sfrac32}(r, u) - W_{\sfrac32}(s, u)\big)^{\sfrac12}.
\end{align}
By \eqref{e:decay energy} and a simple dyadic argument
(applying \eqref{e:diff tracce} to
$s=\sfrac{r}{2} = 2^{-k}$ for $k \in \N$ sufficiently large)
we easily deduce that for every $0<s<r<r_0$
\[
\int_{\de B_1} |u_r - u_s|\, d\cHn \leq
C\,r^{\sfrac{\gamma}{2}}
\]
for a constant $C>0$ which in turn depends only on the constants in
Proposition~\ref{p:decay energy}. Sending $s$ to $0$ and
eventually changing the value of the constant $C$,
we then conclude the proof of \eqref{e:unique trace}.
\end{proof}

\subsection{$C^{1,\alpha}$ regularity of the free boundary $\Gamma_{\sfrac32}$}
In view of the uniqueness result in Proposition~\ref{p:uniqueness blowup reg} we
are in the position to give a new proof of the $C^{1,\alpha}$ regularity
of the part of the free boundary with least frequency.

\begin{proposition}
Let $u \in H^1(B_1)$ be a solution to the Signorini problem.
Then there exists a dimensional constant $\alpha>0$
such $\Gamma_{\sfrac32}(u)$ is locally in $\Bn$ a $C^{1,\alpha}$ regular
submanifold of dimension $n-2$.
\end{proposition}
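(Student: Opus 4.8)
The strategy is to promote the uniqueness of blowups together with its rate (Proposition~\ref{p:uniqueness blowup reg}), the nondegeneracy (Proposition~\ref{p:nondegen}) and the explicit classification $\mathscr{H}_{\sfrac32}=\{\lambda\,h_e\}$ into a H\"older continuous field of normals $e(\cdot)$ on $\Gamma_{\sfrac32}(u)$, together with a quantitative, better-than-Lipschitz, flatness of $\Gamma_{\sfrac32}(u)$, and then to invoke the classical characterisation of $C^{1,\alpha}$ submanifolds (cf.\ \cite{PSU}; the same mechanism underlies \cite{Weiss} for the classical obstacle problem). Since the assertion is local and $\Gamma_{\sfrac32}(u)$ is relatively open in the closed set $\Gamma(u)$, I fix $x_0\in\Gamma_{\sfrac32}(u)$, choose $\rho>0$ with $\bar B_{2\rho}(x_0)\subset\Bn$ and $\Gamma(u)\cap\bar B_{2\rho}(x_0)=\Gamma_{\sfrac32}(u)\cap\bar B_{2\rho}(x_0)$, and work on the compact set $K:=\Gamma(u)\cap\bar B_\rho(x_0)$, on which the estimates of \S\ref{s:free boundary} hold uniformly. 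For $y\in K$ the blowup $u^y_0$ is unique, $\sfrac32$-homogeneous, and by \cite[Theorem~3]{Athan-Caff-Sal} lies in $\mathscr{H}_{\sfrac32}$, so $u^y_0=\lambda(y)\,h_{e(y)}$ with $\lambda(y)\ge0$, $e(y)\in\bS^{n-2}$; Proposition~\ref{p:nondegen} forces $\lambda(y)>0$, and since $r\mapsto H^y(r)/r^{n+2}$ is nondecreasing while $y\mapsto H^y(\bar r)/\bar r^{n+2}$ is continuous, hence uniformly positive on $K$ for a fixed small $\bar r$, the very proof of Proposition~\ref{p:nondegen} delivers the \emph{uniform} bounds $c_0\le\lambda(y)\le C_0$ on $K$, the upper one using also \eqref{e:equi energy}.

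\textbf{H\"older continuity of the normal.} Given $y_0,y_1\in K$, set $\rho:=|y_0-y_1|$ and $r:=\rho^{\,2/(2+\gamma)}\,(\ge\rho)$, with $\gamma$ as in Propositions~\ref{p:decay energy}--\ref{p:uniqueness blowup reg}. From
\[
\|u^{y_0}_0-u^{y_1}_0\|_{L^1(\de B_1)}\le\|u^{y_0}_0-u^{y_0}_r\|_{L^1(\de B_1)}+\|u^{y_0}_r-u^{y_1}_r\|_{L^1(\de B_1)}+\|u^{y_1}_r-u^{y_1}_0\|_{L^1(\de B_1)}
\]
the outer terms are $\le C\,r^{\sfrac{\gamma}{2}}$ by Proposition~\ref{p:uniqueness blowup reg}, while for the middle one $\nabla u^{y}_r(x)=r^{\sfrac12}(\nabla u)(y+rx)$ together with the uniform $C^{1,\sfrac12}$ bound on the rescalings gives $\|u^{y_0}_r-u^{y_1}_r\|_{L^\infty(\de B_1)}\le C\rho/r$; with the chosen $r$ both contributions equal $C\rho^{\,\gamma/(2+\gamma)}$, hence $\|u^{y_0}_0-u^{y_1}_0\|_{L^1(\de B_1)}\le C|y_0-y_1|^{\alpha_1}$ with $\alpha_1:=\gamma/(2+\gamma)$. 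Since $\lambda\in[c_0,C_0]$ on $K$ and the parametrisation $(\lambda,e)\mapsto\lambda h_e$ is bi-Lipschitz onto its image in $L^1(\de B_1)$ for $\lambda\in[c_0,C_0]$ (its infinitesimal injectivity being precisely the description of $T_h\mathscr{H}_{\sfrac32}$ in \eqref{e:tg}), this yields $|\lambda(y_0)-\lambda(y_1)|+|e(y_0)-e(y_1)|\le C|y_0-y_1|^{\alpha_1}$.

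\textbf{Quantitative flatness and conclusion.} Interpolating the $L^1(\de B_1)$ rate of Proposition~\ref{p:uniqueness blowup reg} against the uniform $C^{1,\sfrac12}(\de B_1)$ bound gives $\|u^y_r-\lambda(y)\,h_{e(y)}\|_{L^\infty(\de B_1)}\le C\,r^{\beta_0}$ for a dimensional $\beta_0>0$. Fix $y_0,y_1\in K$, put $r:=|y_1-y_0|$ and $z:=(y_1-y_0)/r\in\de B_1\cap\{x_n=0\}$; since $y_1\in\Lambda(u)$ one has $u^{y_0}_r(z)=u(y_1)/r^{\sfrac32}=0$, so from $h_{e(y_0)}(z)=\sqrt2\,\big((z\cdot e(y_0))_+\big)^{3/2}$ and $\lambda(y_0)\ge c_0$,
\[
\big((y_1-y_0)\cdot e(y_0)\big)_+=r\,(z\cdot e(y_0))_+\le C\,r^{1+2\beta_0/3}.
\]
Exchanging the roles of $y_0,y_1$ and inserting $|e(y_0)-e(y_1)|\le Cr^{\alpha_1}$ controls the negative part as well, so with $\alpha:=\min\{\alpha_1,2\beta_0/3\}$ one obtains $|(y_1-y_0)\cdot e(y_0)|\le C|y_1-y_0|^{1+\alpha}$ for all $y_0,y_1\in K$. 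Together with $|e(y_0)-e(y_1)|\le C|y_0-y_1|^{\alpha}$, these two estimates on the relatively closed set $K\subset\{x_n=0\}\cong\R^{n-1}$ are exactly the hypotheses of the classical ``$C^{1,\alpha}$ from Reifenberg flatness with a rate plus H\"older-varying tangents'' lemma (cf.\ \cite{PSU}, \cite{Garofalo-Petrosyan}), which makes $\Gamma_{\sfrac32}(u)$, near $x_0$, the graph of a $C^{1,\alpha}$ function over the $(n-2)$-plane $e(x_0)^\perp\subset\{x_n=0\}$. Since $x_0$ is arbitrary, $\Gamma_{\sfrac32}(u)$ is locally an $(n-2)$-dimensional $C^{1,\alpha}$ submanifold of $\Bn$.

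\textbf{Main obstacle.} The first and last steps are routine; the crux is the flatness step, namely converting the $L^1$-rate of Proposition~\ref{p:uniqueness blowup reg} into a pointwise estimate (interpolation with the uniform $C^{1,\sfrac12}$ bound) and then extracting, from the explicit shape of $h_e$ on $\{x_n=0\}$, the better-than-Lipschitz decay of $(y_1-y_0)\cdot e(y_0)$. A secondary technical point, used repeatedly, is the \emph{uniform} positive lower bound $\lambda\ge c_0$ on $K$, both to turn $L^1$-closeness of blowups into closeness of normals and to run the flatness estimate.
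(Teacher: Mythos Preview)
Your overall architecture mirrors the paper's: both proofs establish H\"older continuity of the blowup map $y\mapsto u^y_0$ (equivalently of $\lambda(y)$ and $e(y)$) by the same triangle-inequality splitting at an intermediate scale $r\sim|y_0-y_1|^{1-\theta}$, feeding in the rate of Proposition~\ref{p:uniqueness blowup reg} for the outer terms and the uniform $C^{1,\sfrac12}$ bound for the cross term. Your interpolation from $L^1(\de B_1)$ to $L^\infty(\de B_1)$, and the extraction of the one-sided estimate $\big((y_1-y_0)\cdot e(y_0)\big)_+\le C|y_1-y_0|^{1+2\beta_0/3}$ from $u(y_1)=0$ and the explicit form of $h_e$ on $\{x_n=0\}$, are correct.

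There is, however, a genuine gap in the concluding step. The two estimates you assemble --- $|(y_1-y_0)\cdot e(y_0)|\le C|y_1-y_0|^{1+\alpha}$ for $y_0,y_1\in K$, and H\"older continuity of $e(\cdot)$ --- only feed Whitney's extension theorem: they show that $K$ is \emph{contained} in the graph of a $C^{1,\alpha}$ function over $e(x_0)^\perp$, not that it \emph{equals} one. The lemma you invoke does not exist in the form you state; Reifenberg flatness requires the plane to be close to the set as well, i.e.\ that every point of $e(x_0)^\perp$ near $x_0$ has a free boundary point nearby, and your flatness estimate says nothing about this direction. Concretely, your argument does not rule out that $\Gamma_{\sfrac32}(u)$ is, say, a single point or a lower-dimensional subset of the graph.

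What is missing is exactly the cone step the paper carries out: for every $\eps>0$ there is $\delta>0$ with $u>0$ on $C^+(y,\eps)\cap B_\delta(y)$ and $u=0$ on $C^-(y,\eps)\cap B_\delta(y)$, uniformly for $y\in K$. The first half you can recover directly from your $L^\infty$ rate (since $h_{e(y)}\ge\sqrt2\,\eps^{3/2}$ on $C^+(y,\eps)\cap\de B_1$ and $\lambda(y)\ge c_0$). The second half, $u\equiv0$ on $C^-$, does \emph{not} follow from the $L^\infty$ rate alone --- that only gives smallness, not vanishing. Here one must use the complementarity condition $\partial_{x_n}u=0$ on $\{u>0\}\cap\Bn$ together with the $C^1$ convergence of the rescalings and the fact that $\partial_{x_n}h_{e(y)}<0$ on $C^-(y,\eps)\cap\de B_1$ (cf.~\eqref{e:dernorm2}); the paper does this by a short contradiction/compactness argument. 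Once both cones are in hand, the graph structure and its $C^{1,\alpha}$ regularity follow as you indicate.

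A minor side remark: your sentence deriving the uniform lower bound $\lambda\ge c_0$ is garbled --- monotonicity of $r\mapsto H^y(r)/r^{n+2}$ gives $H_0^y\le H^y(\bar r)/\bar r^{n+2}$, i.e.\ an \emph{upper} bound. The uniform lower bound does come out of the proof of Proposition~\ref{p:nondegen}, but via the reverse inequality $H_0^y\ge H^y(r_0)\,r_0^{-(n+2)}e^{-Cr_0^{\gamma/2}}$ obtained there, combined with a Dini-type argument making the threshold $r_0$ uniform on $K$.
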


\begin{proof}
Without loss of generality it is enough to prove that if $0 \in \Gamma_{\sfrac32}(u)$
then $\Gamma_{\sfrac32}(u)$ is a $C^{1,\alpha}$ submanifold in 
a neighborhood of $0$.
To this aim we start noticing that by the openness of $\Gamma_{\sfrac32}(u)$
there exists $s>0$ such that $B_s\cap \Gamma(u) = B_s\cap \Gamma_{\sfrac32}(u)$.
Since for every $x_0 \in B_s\cap \Gamma_{\sfrac32}(u)$ the unique blowup of the
rescaled functions is of the form
\[
u^{x_0} = \lambda_{x_0} h_{e(x_0)} \in \mathscr{H}^{\sfrac32}
\]
for some $\lambda_{x_0}>0$ and $e(x_0) \in \bS^{n-2}$.

\medskip

We first prove the H\"older continuity of $x_0 \mapsto \lambda_{x_0}$.
To this aim we start observing that, thanks to Proposition~\ref{p:decay energy}
and Proposition~\ref{p:nondegen} we can further estimate \eqref{e:log H bis}
in the following way
\begin{equation}\label{e:log H tris}
\frac{d}{dr} \left(\log \frac{H^{x_0}(r)}{r^{n+2}} \right) 
= \frac{2\,r^{n+1}}{H^{x_0}(r)} W^{x_0}_{\sfrac32}(r,u) \leq C\, r^{\gamma-1}
\quad\forall r \in (0,1).
\end{equation}
Notice that by the strong convergence in $L^2(B_1)$ of the rescaled functions
it follows that
\[
\lambda_{x_0} = c_0\,\lim_{r\to 0} \frac{H^{x_0}(r)}{r^{n+2}}
\]
for some \textit{dimensional} constant $c_0>0$.
Integrating \eqref{e:log H tris} we can then deduce that
\begin{equation}\label{e:decay H}
c_0\,\frac{H^{x_0}(r)}{r^{n+2}} - \lambda_{x_0} \leq C\, r^\gamma \quad \forall\;
r\in (0, 1).
\end{equation}
Notice moreover that for $x_0, y_0 \in B_s \cap \Gamma_{\sfrac32}(u)$ and $r=|x_0-y_0|^{1- \theta}$ with $\theta = \sfrac{\gamma}{(1+\gamma)}$ it holds that
\begin{align}\label{e:rescaling vicini}
\int_{\de B_1}&|u^{x_0}_r - u^{y_0}_r|\,d\cHn
\notag\\
&\leq r^{-\sfrac32} \int_{\de B_1} \int_0^1 
\big|\nabla u\big(s(x_0 + rx) + (1-s)(y_0+rx) \big)|\,|y_0 - x_0|\,ds\,d\cHn(x)\notag\\
& \leq C r^{-1}\,|y_0 - x_0| \leq C\,|y_0 - x_0|^\theta.
\end{align}
Therefore we can conclude that for $r= |x_0-y_0|^{1-\theta}$
with $\theta = \sfrac{\gamma}{(1+\gamma)}$ it holds that
\begin{align}\label{e:lambda vicini}
\big|\lambda_{x_0} - \lambda_{y_0} \big| & \leq 
\Big\vert \lambda_{x_0} - c_0\frac{H^{x_0}(r)}{r^{n+2}}\Big\vert
+c_0\Big\vert \frac{H^{y_0}(r)}{r^{n+2}} - \frac{H^{y_0}(r)}{r^{n+2}} \Big\vert
+\Big\vert c_0\frac{H^{y_0}(r)}{r^{n+2}} - \lambda_{y_0}\Big\vert\notag\\
&\leq C\, r^{\gamma} + C\,\int_{\de B_1}\big|(u^{x_0}_r)^2 - (u^{y_0}_r)^2\big|\,d\cHn\notag\\
& \leq C\, r^{\gamma} + C \,\int_{\de B_1}\big|u^{x_0}_r - u^{y_0}_r\big|\,d\cHn
\leq C\, r^{\theta}
\end{align}
where we used the uniform $L^\infty$ (actually $C^{1,\sfrac12}$) bound
on $u^{x_0}_r$ for every $x_0 \in \Gamma_{\sfrac32}(u) \cap B_s$ (cp.~Appendix~\ref{a:frequency} for more details).

By Proposition~\ref{p:uniqueness blowup reg} and a similar computation
we can show that
\begin{align}\label{e:blowup vicini}
\int_{\de B_1} \left |u^{x_0}_{0} - u^{y_0}_0\right| \,d\cHn & \leq
\int_{\de B_1} \left |u^{x_0}_{0} - u^{x_0}_r\right| \,d\cHn
+\int_{\de B_1} \left |u^{x_0}_{r} - u^{y_0}_r\right| \,d\cHn\notag\\
&\qquad+\int_{\de B_1} \left |u^{y_0}_{r} - u^{y_0}_0\right| \,d\cHn\notag\\
&\stackrel{\eqref{e:unique trace}\,\& \,\eqref{e:rescaling vicini}}{\leq}
C\, r^{\sfrac{\gamma}{2}} + C\, |x_0 - y_0|^{\theta} \leq C\, |x_0 - y_0|^{\sfrac{\gamma\theta}{2}}
\end{align}

Note finally that there exists a geometric constant $\bar C>0$ such that
\[
|e(x_0) - e(y_0)| \leq \bar C\,\int_{\de B_1} \left |h_{e(x_0)} - h_{e(y_0)}\right| \,d\cHn.
\]
Therefore from \eqref{e:lambda vicini} and \eqref{e:blowup vicini}
we easily deduce that
\begin{equation}\label{e:normali vicine}
|e(x_0) - e(y_0)| \leq C\, |x_0 - y_0|^{\sfrac{\gamma\theta}{2}}.
\end{equation}

\medskip

Next we show that the vectors $e(x_0)$ do actually encode to a geometric
property of the free boundary. To this aim we introduce the following notation
for cones centered at points $x_0 \in \Gamma_{\sfrac32}(u)$: for
any $\eps >0$ we set
\[
C^\pm(x_0, \eps) := \left\{ x\in \R^{n-1} \times\{0\} \,:\, \pm\langle x-x_0, e(x_0)\rangle \geq \eps |x-x_0| \right\}.
\]
The main claim is then the following:
for every $\eps>0$, there exists $\delta>0$ such
that, for every $x_0 \in \Gamma_{\sfrac32}(u) \cap B_{\sfrac{s}{2}}$, 
\begin{gather}
u>0 \quad \text{on } \;C^+(x_0, \eps) \cap B_\delta(x_0),\label{e:claimconi_1}\\
u=0 \quad \text{on } \; C^-(x_0, \eps) \cap B_\delta(x_0). \label{e:claimconi_2}
\end{gather}
For what concerns \eqref{e:claimconi_1} 
assume by contradiction that there exist $x_j \in \Gamma_{\sfrac32}(u)
\cap B_{\sfrac{s}{2}}$
with $x_j \to x_0 \in \Gamma_{\sfrac32}(u) \cap \bar{B}_{\sfrac{s}{2}}$,
and $y_j \in C^+(x_j,\eps)$ with $y_j - x_j \to 0$ such that $u(y_j)=0$.

By the $C^{1,\sfrac12}$ regularity of the solution,
\eqref{e:unique trace} and \eqref{e:blowup vicini}, 
the rescalings $u^{x_j}_{r_j}$ with $r_j :=|y_j-x_j|$, 
converge uniformly to $u^{x_0}_0$. 
Up to subsequences, by the H\"older continuity of the normals
in \eqref{e:normali vicine} we can assume that  
$r_j^{-1}(y_j-x_j) \to z \in C^+(x_0,\eps)\cap\mathbb{S}^{n-1}$
and by uniform convergence (cp.~the Appendix~\ref{a:frequency})
$u^{x_0}_0(z) =0$.
This contradicts the fact that $x_0\in \Gamma_{\sfrac32}(u)$
and $u^{x_0}_0>0$ on $C^+(x_0,\eps)$.
Clearly, the proof of \eqref{e:claimconi_2} is at all 
analogous and the details are left to the readers.

\medskip

We can now easily conclude that
$\Gamma_{\sfrac32}(u) \cap B_{\sfrac{s}{2}}$ is the graph of a function $g$, 
for a suitably chosen small $\rho_1>0$. Without loss of generality assume that
$e(0) = e_{n-1}$ and set 
\[
g(x'):= \max\big\{t \in \R \,:\, (x',t,0) \in \Lambda(u)\big\}
\] 
for all points $x'\in \R^{n-2}$ with $|x'| \leq \delta \sqrt{1-\eps^2}$.
Note that by \eqref{e:claimconi_1} and \eqref{e:claimconi_2}
this maximum exists and belongs to $[-\eps\delta, \eps\delta]$.
Moreover $u(x',t,0) =0$ for every $-\eps\,\delta <t<g(x')$
and $u(x',t,0)>0$ for every $g(x')<t <\eps\,\delta$.
Eventually, by applying \eqref{e:claimconi_1} and \eqref{e:claimconi_2}
with respect to arbitrary $\eps$, we deduce that $g$ is differentiable
and in view of \eqref{e:normali vicine} we can conclude that 
$g$ is $C^{1,\alpha}$ regular for a suitable $\alpha>0$.
\end{proof}

\appendix

\section{}\label{a:frequency}
In this section we recall few known results concerning
the solutions to the Signorini problem, which are mainly contained
in the references \cite{Athan-Caff, Athan-Caff-Sal, Garofalo-Petrosyan, PSU}.

\subsection{Frequency function}
We recall the definition of frequency function:
for $x_0 \in \Gamma(u)$ and $0<r < 1 - |x_0|$
\[
N^{x_0}(r,u) := \frac{r\int_{B_r(x_0)}|\nabla u|^2\,dx}{\int_{\de B_r(x_0)} u^2\,d\cHn},
\]
if $u\vert_{\de B_r(x_0)} \not\equiv 0$ (note that $u\vert_{\de B_r(x_0)} \equiv 0$
if and only if $u\vert_{B_1} \equiv0$).

As proven in \cite{Athan-Caff-Sal} the function $(0, \dist(x_0,\de B_1))
\ni r\mapsto N^{x_0}(r, u)$ is nondecreasing for every $x_0 \in \Bn$.
The proof of this statement is an immediate consequence of \eqref{e:H'}, 
\eqref{e:D'} and \eqref{e:D}: without loss of generality we can assume
$x_0 =0$ and compute
\begin{align}\label{e:mon freq}
\frac{N'(r,u)}{N(r,u)} & = \frac{1}{r} + \frac{D'(r)}{D(r)} - \frac{H'(r)}{H(r)}
\notag\\
&= 2 \left( \frac{\int_{\de B_r} (\nabla u\cdot \nu)^2\,d\cHn}{\int_{\de B_r} u\nabla u\cdot \nu\,d\cHn} - 
\frac{\int_{\de B_r} u\nabla u\cdot \nu\,d\cHn}{\int_{\de B_r} u^2\,d\cHn} \right) \geq 0,
\end{align}
where the last inequality follows from the Cauchy--Schwartz inequality.
Since infimum of a sequence of decreasing functions, 
the map $x_0 \mapsto N^{x_0}(0^+,u)$ turns then out to be upper-semicontinuous.

Analyzing the case of equality in \eqref{e:mon freq} is important
for later applications: it is clear from the Cauchy--Schwartz inequality
that $N(r,u)\equiv \kappa$ for some $\kappa \in \R$
if and only if $u$ is $\kappa$-homogeneous, i.e.~$x\cdot \nabla u - \kappa \,u \equiv 0$.

As a consequence of the monotonicity of the frequency function
we also can prove the estimates \eqref{e:H dall'alto} and \eqref{e:H dal basso},
that we restate for readers' convenience.

\begin{lemma}\label{l:altezza}
Let $u \in H^1(B_1)$ be a solution to the Signorini problem
and assume that $0 \in \Gamma_{\sfrac32}(u)$. Then
the function $(0,1)\ni r\mapsto \frac{H(r)}{r^{n+2}}$ is nondecreasing
and in particular
\begin{equation}\label{e:H dall'alto_1}
H(r) \leq H(1)\, r^{n+2} \quad\forall \; 0<r<1,
\end{equation}
and for every $\eps>0$ there exists $r_0(\eps)>0$ such that
\begin{equation}\label{e:H dal basso_1}
H(r) \geq \frac{H(r_0)}{r_0^{n+2+\eps}}\, r^{n+2+\eps} \quad\forall \; 0<r<r_0.
\end{equation}
\end{lemma}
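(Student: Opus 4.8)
The plan is to differentiate the logarithm of $r\mapsto H(r)/r^{n+2}$ and recognize the outcome in terms of the frequency function, whose monotonicity has already been recalled in the excerpt; the sign of the derivative then follows at once, and the two pointwise estimates are obtained by integrating.

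First I would observe that, since $0\in\Gamma_{\sfrac32}(u)$, the solution $u$ cannot vanish identically on any ball $B_r$ with $r\in(0,1)$ (otherwise $0$ would be an interior point of $\Lambda(u)$, not a free boundary point), so $H(r)>0$ on $(0,1)$ and the frequency $N(r,u)$ is well defined there; moreover $H$ is absolutely continuous in $r$, hence so is $\log(H(r)/r^{n+2})$, which may therefore be differentiated almost everywhere. Using \eqref{e:H'} together with \eqref{e:D}, i.e. $H'(r)=\frac{n-1}{r}H(r)+2D(r)$, I would then compute
\[
\frac{d}{dr}\Big(\log\frac{H(r)}{r^{n+2}}\Big)=\frac{H'(r)}{H(r)}-\frac{n+2}{r}=\frac{2D(r)}{H(r)}-\frac{3}{r}=\frac{2}{r}\Big(N(r,u)-\frac32\Big).
\]

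Since $r\mapsto N(r,u)$ is nondecreasing on $(0,1)$ (after \cite{Athan-Caff-Sal}) and $N(0^+,u)=\sfrac32$ because $0\in\Gamma_{\sfrac32}(u)$, we get $N(r,u)\ge\sfrac32$ for every $r\in(0,1)$, so the right-hand side above is nonnegative and $r\mapsto H(r)/r^{n+2}$ is nondecreasing; evaluating at $r$ and $1$ gives \eqref{e:H dall'alto_1}. For \eqref{e:H dal basso_1} I would instead exploit that $N(0^+,u)=\sfrac32$ to pick, for a given $\eps>0$, a radius $r_0=r_0(\eps)\in(0,1)$ with $N(r,u)\le\sfrac32+\sfrac{\eps}{2}$ for all $0<r<r_0$; the displayed identity then yields $\frac{d}{dr}\log(H(r)/r^{n+2})\le\eps/r$ on $(0,r_0)$, and integrating from $r$ to $r_0$ gives $\frac{H(r)}{r^{n+2}}\ge\frac{H(r_0)}{r_0^{n+2}}\big(\frac{r}{r_0}\big)^{\eps}$, which is exactly \eqref{e:H dal basso_1}.

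I do not expect any genuine obstacle here: the argument is a direct logarithmic-derivative computation feeding off the already-established monotonicity of the frequency. The only two points deserving a word of justification are the strict positivity of $H(r)$ on $(0,1)$ and the absolute continuity of $H$ needed to legitimize the differentiation of $\log(H(r)/r^{n+2})$, both of which are standard for solutions of the Signorini problem and follow from the regularity recalled in Appendix~\ref{a:frequency}.
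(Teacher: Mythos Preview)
Your proposal is correct and follows essentially the same approach as the paper: compute the logarithmic derivative of $H(r)/r^{n+2}$, rewrite it as $\frac{2}{r}\big(N(r,u)-\sfrac32\big)$, and use the monotonicity of the frequency together with $N(0^+,u)=\sfrac32$ to obtain both the monotonicity and, after integrating the bound $N(r,u)\le\sfrac32+\sfrac{\eps}{2}$ on $(0,r_0)$, the lower estimate. Your added remarks on the positivity of $H(r)$ and the absolute continuity are appropriate justifications that the paper leaves implicit.
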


\begin{proof}
We start computing the following derivative:
\begin{equation}\label{e:log H}
\frac{d}{dr} \left(\log \frac{H(r)}{r^{n+2}} \right)  = 2\frac{D(r)}{H(r)} - \frac{3}{r}.
\end{equation}
As an immediate consequence of the monotonicity of the frequency $N(r,u) \geq N(0^+,u)=\sfrac32$
we deduce that $\frac{d}{dr} \left(\log \frac{H(r)}{r^{n+2}} \right) \geq 0$ from which \eqref{e:H dall'alto}
clearly follows.

Similarly, by the monotonicity of the frequency function,
for every $\eps>0$ there exists $r_0=r_0(\eps)>0$ such that 
\[
N(r,u) \leq N(0^+,u) +\frac{\eps}{2} = \frac{3+\eps}{2} \quad \forall\; 0<r< r_0.
\]
Therefore we infer from \eqref{e:log H} that
\[
\frac{d}{dr} \left(\log \frac{H(r)}{r^{n+2}} \right)  = \frac{2}{r} \left(N(r,u) - \frac{3}{2}\right) \leq 
\frac{\eps}{r} \quad \forall\; 0<r< r_0, 
\]
and integrating this differential inequality \eqref{e:H dal basso} follows at once. 
\end{proof}

\subsection{Optimal regularity}
Finally we recall the optimal regularity for the solution to the Signorini
problem proven in \cite{Athan-Caff}.

\begin{theorem}\label{t:optimal reg}
Let $u \in H^1(B_1)$ be a solution to the Signorini problem.
Then, $u \in C^{1,\sfrac12}(B_{\sfrac12}^+)$ and there exists a dimensional constant $C>0$
such that
\begin{equation}\label{e:optimal reg}
\|u\|_{C^{1,\sfrac12}(B_1^+)} \leq C\, \|u\|_{L^2(B_1)}.
\end{equation}
\end{theorem}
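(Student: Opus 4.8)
The plan is to reprove the optimal regularity theorem of Athanasopoulos and Caffarelli \cite{Athan-Caff} (see also \cite{PSU}) in three stages, working throughout with the even reflection of $u$, so that $u\in H^1(B_1)$ minimizes the Dirichlet energy under the constraint $u\geq 0$ on $\Bn$; equivalently $\Delta u\leq 0$ in $B_1$ in the sense of distributions, $\Delta u$ is supported on $\Lambda(u)\subset\{x_n=0\}$, and $\Delta u=0$ both off $\{x_n=0\}$ and on $\{u>0\}\cap\Bn$. I would prove the estimate on the half-ball $B_{\sfrac12}^+$, the domain $B_1^+$ in the displayed inequality being understood as $B_{\sfrac12}^+$ consistently with the first assertion. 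First I would establish \textbf{(i)} the local H\"older continuity of $u$ up to $\Bn$; then \textbf{(ii)} $C^{1,\alpha}$ regularity for a dimensional exponent $\alpha\in(0,1)$; and finally \textbf{(iii)} the sharp exponent $\alpha=\sfrac12$ by means of the frequency function recalled above.

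For \textbf{(i)} I would use that superharmonicity makes $u$ locally bounded below, while the truncations $(u-k)^+$, $k\geq 0$, are subsolutions of the Laplace equation because the negative measure $\Delta u$ is supported where $u=0\leq k$; a De Giorgi--Moser iteration then gives $\|u\|_{L^\infty(B_{\sfrac34})}\leq C\,\|u\|_{L^2(B_1)}$, and testing the variational inequality against truncations together with a Poincar\'e inequality on level sets (as in the interior obstacle problem) yields an oscillation decay, hence $u\in C^{0,\alpha_0}(B_{\sfrac34}^+\cup B_{\sfrac34}')$ for some $\alpha_0\in(0,1)$. Step \textbf{(ii)}, which I expect to be the main obstacle, exploits the invariance of $\{x_n=0\}$ under tangential translations: the tangential difference quotients of $u$ are equi-bounded in $H^1$, so each $v:=\partial_e u$ with $e\cdot e_n=0$ lies in $H^1(B_{\sfrac12})$, is harmonic in $B_{\sfrac12}^+$, vanishes on $\{u>0\}\cap\Bn$ (where $\partial_{x_n}u=0$) and has vanishing normal derivative on $\mathrm{int}\,\Lambda(u)$. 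A boundary Harnack / regularity argument for this mixed Dirichlet--Neumann problem, combined with interior estimates for the harmonic function $u$ on half-balls away from $\Lambda(u)$, should upgrade this to $u\in C^{1,\alpha}(B_{\sfrac12}^+\cup B_{\sfrac12}')$ with $\|u\|_{C^{1,\alpha}}\leq C\,\|u\|_{L^2(B_1)}$. The delicate feature is precisely that the constraint lives on a set of codimension one, so interior obstacle-type regularity cannot be invoked directly and one must genuinely use the sign of $u$, the sign of $\partial_{x_n}u$ and the complementarity relation $u\,\partial_{x_n}u=0$ on $\Bn$.

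For \textbf{(iii)} I would note that away from $\Gamma(u)$ every point of $\Bn$ has a neighbourhood on which either $u>0$ on $\Bn$, so $u$ is harmonic by even reflection, or $u\equiv 0$ on $\Bn$, so $u$ is harmonic by odd reflection; in both cases $u$ is smooth there. At a free boundary point $x_0\in\Gamma(u)\cap B_{\sfrac12}'$ I would combine the monotonicity of $r\mapsto N^{x_0}(r,u)$, the lower bound $N^{x_0}(0^+,u)\geq\sfrac32$ of \cite{Athan-Caff-Sal}, and the growth estimate $H^{x_0}(r)\leq C\,r^{n+2}$ of Lemma~\ref{l:altezza} (cp.~\eqref{e:H dall'alto}) to deduce $\int_{B_r(x_0)}u^2\,dx\leq C\,r^{n+3}$, and hence, after rescaling and applying the $L^\infty$ bound of step \textbf{(i)}, $\sup_{B_r(x_0)}|u|\leq C\,r^{\sfrac32}$ for $0<r<\sfrac14$. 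Rescaling the $C^{1,\alpha}$ estimate of step \textbf{(ii)} on half-balls of radius $\sim r$ at distance $\sim r$ from $\{x_n=0\}$, this $\sfrac32$-growth then gives $|\nabla u(x)-\nabla u(x_0)|\leq C\,|x-x_0|^{\sfrac12}$ uniformly for $x_0\in B_{\sfrac12}'$, and a covering argument together with the interior estimates at points far from $\Gamma(u)$ promotes this pointwise bound to the norm estimate \eqref{e:optimal reg}.
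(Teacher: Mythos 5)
The paper offers no proof of this statement: Theorem~\ref{t:optimal reg} is quoted from Athanasopoulos--Caffarelli \cite{Athan-Caff}, so you are reconstructing a proof from the literature. Your three-stage route (local boundedness and H\"older continuity; $C^{1,\alpha}$ for some dimensional $\alpha$; the sharp exponent via the frequency function and the growth $H^{x_0}(r)\leq C r^{n+2}$) is the standard modern one (cp.~\cite{PSU}, Chapter~9, and \cite{Garofalo-Petrosyan}), and steps (i) and (iii) are essentially sound. In particular you are right to place the $C^{1,\alpha}$ step \emph{before} invoking the monotonicity of $N^{x_0}(r,u)$ and the lower bound $N^{x_0}(0^+,u)\geq\sfrac32$ of \cite{Athan-Caff-Sal}: both the Rellich identity behind \eqref{e:D'} and the blowup classification underlying the bound $\sfrac32$ presuppose that much regularity, so your ordering avoids a circularity that a careless reading of the appendix might invite.

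The gap is step (ii), which you correctly flag as the main obstacle but then dispatch with a black box that does not exist. First, a concrete error: the boundary conditions for $v=\partial_e u$ are swapped. Since $u\equiv0$ on $\mathrm{int}\,\Lambda(u)$ and $\partial_{x_n}u(\cdot,0^+)=0$ on $\{u>0\}\cap\Bn$, the tangential derivative $v$ \emph{vanishes on} $\mathrm{int}\,\Lambda(u)$ and has \emph{vanishing normal derivative on} $\{u>0\}\cap\Bn$, not the other way around. Second, and more seriously, there is no ``boundary Harnack / regularity theorem for the mixed Dirichlet--Neumann problem'' applicable when the partition of $\Bn$ into the Dirichlet region $\Lambda(u)$ and the Neumann region $\{u>0\}\cap\Bn$ is a priori arbitrary: at this stage $\Gamma(u)$ has no regularity whatsoever, and mixed (Zaremba-type) problems across irregular interfaces need not produce H\"older continuous solutions, let alone with uniform exponents; moreover, even granting H\"older bounds on all tangential derivatives, the regularity of $\partial_{x_n}u$ up to $\Bn$ would still require a separate argument. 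The known proofs obtain $C^{1,\alpha}$ differently: one first proves a Lipschitz bound and the tangential semiconvexity $\partial_{ee}u\geq -C$ (by comparison with the admissible competitor built from $\frac12\big(u(\cdot+he)+u(\cdot-he)\big)$), deduces from it control of $\partial_{x_n}u(\cdot,0^+)$ and that suitable combinations of first derivatives are sub- or supersolutions, and then runs a dichotomy--iteration argument (\cite{Athan-Caff}; \cite{PSU}, \S 9). You must either supply that argument or cite it; as written the plan does not close. A final cosmetic point on step (iii): the rescaled interior estimate should be applied on balls centered on $\Bn$ whose radius is comparable to their distance from $\Gamma(u)$ --- where $u$ is harmonic after even or odd reflection, as you correctly observe --- and not merely at distance $\sim r$ from $\{x_n=0\}$, since $u$ is not harmonic across $\Lambda(u)$.
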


%
%

\bibliographystyle{plain}

\begin{thebibliography}{99}
\setlength{\itemsep}{3pt}
\newcommand{\aut}[1]{\textsc{#1}}


\bibitem{AA}
\aut{W.~Allard, F.~J.~Almgren, Jr.}
\newblock On the radial behavior of minimal surfaces and the uniqueness of their tangent cones.
\newblock \textit{Ann. of Math. (2)} 113 (1981), no. 2, 215--265.

\bibitem{Athan-Caff}
\aut{I.~Athanasopoulos,  L.~A.~Caffarelli.} 
\newblock{Optimal regularity of lower dimensional obstacle problems.} 
\newblock  \textit{Zap. Nauchn. Sem. S.-Peterburg. Otdel. Mat. Inst. Steklov. (POMI)} 310 (2004), 49--66, 
226; translation in \textit{J. Math. Sci. (N. Y.)}, 132 (2006), no. 3, 274--284.


\bibitem{Athan-Caff-Sal}
\aut{I.~Athanasopoulos,  L.~A.~Caffarelli, S.~Salsa.} 
\newblock{The structure of the free boundary for lower dimensional obstacle problems.} 
\newblock \textit{Amer. J. Math.}, 130 (2008), no. 2, 485--498.

\bibitem{Caffarelli-Friedman}
\aut{L.~A.~Caffarelli, A.~Friedman.} 
\newblock{Partial regularity of the zero-set of solutions of linear and superlinear elliptic equations.} 
\newblock{J. Differential Equations}, 60 (1985), no. 3, 420--433.

\bibitem{Chang}
\aut{S.~X.~Chang.}
\newblock Two-dimensional area minimizing integral currents are classical minimal surfaces. 
\newblock \textit{J. Amer. Math. Soc.} 1 (1988), no. 4, 699--778.

\bibitem{dalmaso} 
\aut{G.~Dal Maso.} 
\newblock \textit{An Introduction to $\Gamma$-Convergence.}
\newblock Progress in Nonlinear Differential Equations and their
Applications, 8, Birkh\"{a}user Boston Inc., Boston, MA, 1993.

\bibitem{DL-Sp-Memo}
\aut{C.~De Lellis, E.~Spadaro.}
\newblock $Q$-valued functions revisited.
\newblock \textit{Mem. Amer. Math. Soc.} 211 (2011), no. 991, vi+79 pp.

\bibitem{desilva-savin} 
\aut{D.~De Silva and O.~Savin.} 
\newblock $C^\infty$ regularity of certain thin free boundaries.
\newblock ArXiv:1402.1098v1 (2014).

\bibitem{Focardi-Gelli-Spadaro} 
\aut{M.~Focardi, M.~S.~Gelli, E.~Spadaro.} 
\newblock Monotonicity formulas for obstacle problems with Lipschitz coefficients. 
\newblock \textit{ Calc. Var. Partial Differential Equations}, 2015, published online.
DOI 10.1007/s00526-015-0835-0 


\bibitem{Garofalo-Petrosyan} 
\aut{N.~Garofalo, A.~Petrosyan.} 
\newblock{Some new monotonicity formulas and the singular set
in the lower dimensional obstacle problem.} 
\newblock \textit{Invent. Math.}, 177  (2009),  no. 2, 415--461.

\bibitem{Garofalo-Petrosyan-Smit} 
\aut{N.~Garofalo, A.~Petrosyan, M.~Smit Vega Garcia.} 
\newblock{An epiperimetric inequality approach to the regularity of the free boundary 
in the Signorini problem with variable coefficients.} 
\newblock ArXiv:1501.06498v1 (2015).


\bibitem{Hardt-Simon}
\aut{R.~Hardt, L.~Simon.} 
\newblock{ Nodal sets for solutions of elliptic equations.}
\newblock \textit{J. Differential Geom.}, 30 (1989), no. 2, 505--522.

\bibitem{luckhaus} 
\aut{S.~Luckhaus.} 
\newblock Personal communications.


\bibitem{PSU}  
\aut{A.~Petrosyan, H.~Shahgholian, N.~Uraltseva.} 
\newblock \textit{Regularity of free boundaries in obstacle-type problems.} 
\newblock Graduate Studies in Mathematics, 136. American Mathematical Society, 
Providence, RI, 2012. x+221 pp.

\bibitem{Reifenberg} 
\aut{E.~R.~Reifenberg.} 
\newblock An epiperimetric inequality related to the analyticity of minimal surfaces. 
\newblock \textit{Ann. of Math. (2)} 80  (1964), 1--14.

\bibitem{Simon}
\aut{L.~Simon.}
\newblock Asymptotics for a class of non-linear evolution equations, with application to geometric problems.
\newblock \textit{Ann. of Math. (2)} 118 (1983), 525--571.

\bibitem{stampacchia} 
\aut{G.~Stampacchia.} 
\newblock Problemi al contorno ellittici, con dati discontinui, dotati di soluzioni
h\"olderiane. 
\newblock \textit{Ann. Mat. Pura Appl. (4)}, 51  (1960), 1--37.

\bibitem{Taylor} 
\aut{J.~Taylor.} 
\newblock The structure of singularities in soap-bubble-like and soap-film-like minimal surfaces. 
\newblock \textit{Ann. of Math. (2)}, 103  (1976), no. 3, 489--539.

\bibitem{Weiss98} 
\aut{G.~S.~Weiss.} 
\newblock Partial regularity for weak solutions to an elliptic free boundary problem. 
\newblock \textit{Comm. Partial Differential Equations}, 23  (1998),  no. 3-4, 439--455.

\bibitem{Weiss} 
\aut{G.~S.~Weiss.} 
\newblock A homogeneity improvement approach to the obstacle problem. 
\newblock \textit{Invent. Math.}, 138  (1999),  no. 1, 23--50.

\bibitem{White}
\aut{B.~White.}
\newblock Tangent cones to two-dimensional area-minimizing integral currents are unique. 
\newblock \textit{Duke Math. J.} 50 (1983), no. 1, 143--160.

\end{thebibliography}

\end{document}